\definecolor{mylinkcolor}{rgb}{0.5,0.0,0.0}
\definecolor{myurlcolor}{rgb}{0.0,0.0,0.75}
\newcommand{\Aseq}[1]{\href{http://oeis.org/#1}{#1}}
\newcommand{\animatecaption}[1]{{\small Note: click to animate (requires adobe reader), or visit \url{#1}}}
\newcommand{\ifunc}{\mathbbm{1}} 
\newcommand{\A}{\mathbf{A}}
\newcommand{\I}{\mathbf{I}}
\newcommand{\F}{\mathbf{F}}
\newcommand{\Fp}{\F_p}
\newcommand{\Fpbar}{\overline\F_p}
\newcommand{\Fq}{\F_q}
\newcommand{\Fqbar}{\overline\F_q}
\newcommand{\Z}{\mathbf{Z}}
\newcommand{\Q}{\mathbf{Q}}
\newcommand{\R}{\mathbf{R}}
\newcommand{\C}{\mathbf{C}}
\newcommand{\Qbar}{\overline{\Q}}
\newcommand{\Kbar}{\overline{K}}
\newcommand{\kbar}{\overline{k}}
\newcommand{\Aut}{{\rm Aut}}
\newcommand{\Gal}{{\rm Gal}}
\newcommand{\Jac}{\operatorname{Jac}}
\newcommand{\MT}{\operatorname{MT}}
\newcommand{\Gm}{\mathbf G_m}
\newcommand{\Hg}{\operatorname{Hg}}
\newcommand{\AST}{\operatorname{AST}}
\newcommand{\ST}{\operatorname{ST}}
\newcommand{\GT}{\operatorname{GT}}
\renewcommand{\SS}{\mathbf S}
\newcommand{\GL}{{\rm GL}}
\newcommand{\SL}{{\rm SL}}
\renewcommand{\O}{{\rm O}}
\newcommand{\SO}{{\rm SO}}
\newcommand{\USp}{{\rm USp}}
\newcommand{\GSp}{{\rm GSp}}
\newcommand{\Sp}{{\rm Sp}}
\newcommand{\End}{{\rm End}}
\newcommand{\tr}{\operatorname{tr}}
\newcommand{\disc}{\operatorname{disc}}
\newcommand{\Frob}{\operatorname{Frob}}
\newcommand{\p}{\mathfrak{p}}
\newcommand{\q}{\mathfrak{q}}
\newcommand{\f}{\mathfrak{f}}
\newcommand{\im}{\operatorname{im}}
\renewcommand{\Im}{\operatorname{Im}}
\newcommand{\smallmat}[4]{\left(\begin{smallmatrix}#1&#2\\#3&#4\end{smallmatrix}\right)}
\newcommand{\Exp}{\mathrm{E}}
\newcommand{\Mom}{\mathrm{M}}
\newcommand{\Spec}{\operatorname{Spec}}
\newcommand{\Pic}{\operatorname{Pic}}
\newcommand{\SU}{\operatorname{SU}}
\newcommand{\U}{\operatorname{U}}
\renewcommand{\P}{\mathbf{P}}
\newcommand{\conj}{\operatorname{conj}}
\renewcommand{\Re}{\operatorname{Re}}
\newcommand{\cyc}[1]{{\mathrm{C}_#1}}
\newcommand{\Res}{\operatorname{Res}}
\newtheorem{theorem}{Theorem}[section]
\newtheorem{conjecture}[theorem]{Conjecture}
\newtheorem{corollary}[theorem]{Corollary}
\newtheorem{lemma}[theorem]{Lemma}
\newtheorem{proposition}[theorem]{Proposition}
\theoremstyle{definition}
\newtheorem{definition}[theorem]{Definition}
\newtheorem{algorithm}[theorem]{Algorithm}
\newtheorem{example}[theorem]{Example}
\newtheorem{remark}[theorem]{Remark}
\newtheorem{background}[theorem]{Background}
\newtheorem{exercise}{Exercise}
\numberwithin{exercise}{section}
\title{Sato-Tate Distributions}
\author{{\small Andrew V. Sutherland}}
\thanks{The author was supported by NSF grants DMS-1115455 and DMS-1522526.}
\begin{document}

\begin{abstract}
In this expository article we explore the relationship between Galois representations, motivic $L$-functions, Mumford-Tate groups, and Sato-Tate groups, and we give an explicit formulation of the Sato-Tate conjecture for abelian varieties as an equidistribution statement relative to the Sato-Tate group.  We then discuss the classification of Sato-Tate groups of abelian varieties of dimension $g\le 3$ and compute some of the corresponding trace distributions.  This article is based on a series of lectures presented at the 2016 Arizona Winter School held at the Southwest Center for Arithmetic Geometry.
\end{abstract}

\maketitle

\section{An introduction to Sato-Tate distributions}\label{lec:intro}

Before discussing the Sato-Tate conjecture and Sato-Tate distributions in the context of abelian varieties, let us first consider the more familiar setting of Artin motives (varieties of dimension zero).

\subsection{A first example}\label{sec:wtzero}

Let $f\in \Z[x]$ be a squarefree polynomial of degree $d$.  For each prime $p$, let $f_p\in(\Z/p\Z)[x]\simeq\F_p[x]$ denote the reduction of $f$ modulo $p$, and define
\[
N_f(p)\coloneqq\#\{x\in \Fp: f_p(x)=0\},
\]
which we note is an integer between $0$ and $d$.  We would like to understand how $N_f(p)$ varies with $p$.
The table below shows the values of $N_f(p)$ when $f(x)=x^3-x+1$ for primes $p\le 60$:
\smallskip

\begin{center}
\begin{tabular}{rrrrrrrrrrrrrrrrrrr}
\toprule
$p:$ & 2& 3& 5& 7& 11& 13& 17& 19& 23& 29& 31& 37& 41& 43& 47& 53& 59\\
$N_f(p)$ & 0& 0& 1& 1& 1& 0& 1& 1& 2& 0& 0& 1& 0& 1& 0& 1& 3\\
\bottomrule
\end{tabular}
\end{center}
\smallskip

There does not appear to be any obvious pattern (and we should know not to expect one, because the Galois group of $f$ is nonabelian).
The prime $p=23$ is exceptional because it divides $\disc(f)=-23$, which means that $f_{23}(x)$ has a double root.
As we are interested in the distribution of $N_f(p)$ as~$p$ tends to infinity, we are happy to ignore such primes, which are necessarily finite in number.

This tiny dataset does not tell us much.  Let us now consider primes $p\le B$ for increasing bounds $B$, and compute the proportions
$c_i(B)$ of primes $p\le B$ with $N_f(p)=i$.  We obtain the following statistics:
\smallskip

\begin{center}
\begin{tabular}{lcccc}
$B$ & $c_0(B)$ &  $c_1(B)$ &  $c_2(B)$ &  $c_3(B)$\\\midrule
$10^3$ & 0.323353 & 0.520958 & 0.005988 & 0.155689\\
$10^4$ & 0.331433 & 0.510586 & 0.000814 & 0.157980\\
$10^5$ & 0.333646 & 0.502867 & 0.000104 & 0.163487\\
$10^6$ & 0.333185 & 0.500783 & 0.000013 & 0.166032\\
$10^7$ & 0.333360 & 0.500266 & 0.000002 & 0.166373\\
$10^8$ & 0.333337 & 0.500058 & 0.000000 & 0.166605\\
$10^9$ & 0.333328 & 0.500016 & 0.000000 & 0.166656\\
$10^{12}$ & 0.333333 & 0.500000 & 0.000000 & 0.166666\\\midrule
\end{tabular}
\end{center}
This leads us to conjecture that the following limiting values $c_i$ of $c_i(B)$ as $B\to\infty$ are
\[
c_0=1/3,\quad c_1=1/2,\quad c_2=0,\quad c_3=1/6.
\]

There is of course a natural motivation for this conjecture (which is, in fact, a theorem), one that would allow us to correctly predict the asymptotic ratios $c_i$ without needing to compute any statistics.
Let us fix an algebraic closure $\Qbar$ of~$\Q$.
The absolute Galois group $\Gal(\Qbar/\Q)$ acts on the roots of $f(x)$ by permuting them.
This allows us to define the \emph{Galois representation} (a continuous homomorphism)
\[
\rho_f\colon \Gal(\Qbar/\Q)\to \GL_d(\C),
\]
whose image is a subgroup of the permutation matrices in $\mathrm{O}_d(\C)\subseteq \GL_d(\C)$; here $\mathrm{O}_d$ denotes the orthogonal group (we could replace~$\C$ with any field of characteristic zero).
Note that $\Gal(\Qbar/\Q)$ and $\GL_d(\C)$ are topological groups (the former has the Krull topology), and homomorphisms of topological groups are understood to be continuous.
In order to associate a permutation of the roots of $f(x)$ to a matrix in $\GL_d(\C)$ we need to fix an ordering of the roots; this amounts to choosing a basis for the vector space~$\C^d$, which means that our representation~$\rho_f$ is really defined only up to conjugacy.

The value $\rho_f$ takes on $\sigma\in\Gal(\Qbar/\Q)$ depends only on the restriction of $\sigma$ to the splitting field $L$ of~$f$, so we could restrict our attention to $\Gal(L/\Q)$.
This makes $\rho_f$ an \emph{Artin representation}: a continuous representation $\Gal(\Qbar/\Q)\to \GL_d(\C)$ that factors through a finite quotient (by an open subgroup).  But in the more general settings we wish to consider this may not always be true, and even when it is, we typically will not be given $L$; it is thus more convenient to work with $\Gal(\Qbar/\Q)$.

To facilitate this approach, we associate to each prime~$p$ an \emph{absolute Frobenius element}
\[
\Frob_p\in \Gal(\Qbar/\Q)
\]
that may be defined as follows.
Fix an embedding $\Qbar$ in $\Qbar_p$ and use the valuation ideal $\mathfrak P$ of $\Qbar_p$ (the maximal ideal of its ring of integers) to define a compatible system of primes $\q_L\coloneqq\mathfrak P\cap L$, where $L$ ranges over all finite extensions of $\Q$.
For each prime $\q_L$, let $D_{\q_L}\subseteq \Gal(L/\Q)$, denote its decomposition group, $I_{\q_L}\subseteq D_{\q_L}$ its inertia group, and $\F_{\q_L}\coloneqq\Z_L/\q_L$ its residue field, where $\Z_L$ denotes the ring of integers of~$L$.
Taking the inverse limit of the exact sequences
\[
1 \to I_{\q_L} \to D_{\q_L}\to \Gal(\F_{\q_L}/\Fp)\to 1
\]
over finite extensions $L/\Q$ ordered by inclusion gives an exact sequence of profinite groups
\[
1\to I_p \to D_p\to \Gal(\Fpbar/\Fp)\to 1.
\]
We now define $\Frob_p\in D_p\subseteq  \Gal(\Qbar/\Q)$ by arbitrarily choosing a preimage of the Frobenius automorphism $x\to x^p$ in $\Gal(\Fpbar/\Fp)$ under the map in the exact sequence above.
We actually made two arbitrary choices in our definition of $\Frob_p$, since we also chose an embedding of $\Qbar$ into~$\Qbar_p$.
Our absolute Frobenius element $\Frob_p$ is thus far from canonical, but it exists.
Its key property is that if $L/\Q$ is a finite Galois extension in which~$p$ is unramified, then the conjugacy class $\conj_L(\Frob_p)$ in $\Gal(L/\Q)$ of the restriction of $\Frob_p\colon\Qbar\to\Qbar$ to $L$ is uniquely determined, independent of our choices; note that when $p$ is unramified, $I_p$ is trivial and $D_p\simeq \Gal(\Fpbar/\Fp)$.
Everything we have said applies \emph{mutatis mutandi} if we replace $\Q$ by a number field~$K$: put $\Kbar\coloneqq \Qbar$, replace $p$ by a prime $\p$ of $K$ (a nonzero prime ideal of $\Z_K$), and replace $\Fp$ by the residue field $\F_\p\coloneqq \Z_K/\p$.

We now make the following observation: for any prime $p$ that does not divide $\disc(f)$ we have
\begin{equation}\label{eq:Nfp}
N_f(p) = \tr\rho_f(\Frob_p).
\end{equation}
This follows from the fact that the trace of a permutation matrix counts its fixed points.
Since $p$ is unramified in the splitting field of $f$, the inertia group $I_p\subseteq \Gal(\Qbar/\Q)$ acts trivially on the roots of $f(x)$, and the action of $\Frob_p$ on the roots of $f(x)$ coincides (up to conjugation) with the action of the Frobenius automorphism $x\to x^p$ on the roots of $f_p(x)$, both of which are described by the permutation matrix $\rho_f(\Frob_p)$.
The Chebotarev density theorem implies that we can compute $c_i$ via \eqref{eq:Nfp} by counting matrices in $\rho_f(\Gal(\Qbar/\Q))$ with trace~$i$, and it is enough to determine the trace and cardinality of each conjugacy class.

\begin{theorem}\label{thm:chebotarev}\textsc{Chebotarev Density Theorem}
Let $L/K$ be a finite Galois extension of number fields with Galois group $G\coloneqq\Gal(L/K)$.
For every subset $C$ of $G$ stable under conjugation we have
\[
\lim_{B\to\infty}\frac{\#\{N(\p)\le B: \conj_L(\Frob_\p) \subseteq C\}}{\#\{N(\p)\le B\}} = \frac{\#C}{\#G},
\]
where $\p$ ranges over primes of $K$ and $N(\p)\coloneqq \#\F_\p$ is the cardinality of the residue field $\F_\p\coloneqq \Z_K/\p$.
\end{theorem}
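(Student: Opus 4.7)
The plan is to analyze the Artin $L$-functions attached to the irreducible complex characters of $G\coloneqq\Gal(L/K)$ and extract the density via a Tauberian argument. The starting point is orthogonality of characters: fixing any $g_0\in C$, one has
\[
\ifunc_C(g) \;=\; \frac{\#C}{\#G}\sum_{\chi\in\widehat G} \overline{\chi(g_0)}\,\chi(g),
\]
where $\widehat G$ denotes the set of irreducible characters of $G$. This reduces the problem of counting primes $\p$ with $\conj_L(\Frob_\p)\subseteq C$ to that of controlling, for each irreducible $\chi$, the sums $\sum_{N(\p)\le B}\chi(\Frob_\p)$.

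Each such sum is controlled by the corresponding Artin $L$-function $L(s,\chi,L/K)$, defined as an Euler product over primes $\p$, whose logarithm satisfies
\[
\log L(s,\chi,L/K) \;=\; \sum_{\p}\frac{\chi(\Frob_\p)}{N(\p)^s} \;+\; H_\chi(s)
\]
for $\Re(s)>1$, with $H_\chi$ holomorphic on $\Re(s)>\tfrac12$ and collecting contributions from higher prime powers and ramified primes. For the trivial character, $L(s,\mathbf 1,L/K)=\zeta_K(s)$ has a simple pole at $s=1$, contributing the expected main term of size $(\#C/\#G)\log\tfrac{1}{s-1}$. For every nontrivial irreducible $\chi$ one must show that $L(s,\chi,L/K)$ extends to a function holomorphic and nonzero on a neighborhood of $s=1$, so that the corresponding contribution stays bounded.

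This non-vanishing is the heart of the matter. The classical route is a two-stage reduction. First, by Brauer's induction theorem---or, in Chebotarev's original approach, by fixing some $\sigma\in C$, passing to the fixed field $M\coloneqq L^{\langle\sigma\rangle}$, and counting primes of $M$ whose Frobenius in the cyclic group $\Gal(L/M)$ equals $\sigma$---every irreducible character of $G$ can be written as a $\Z$-linear combination of characters induced from one-dimensional characters of subgroups. The inductive behavior of Artin $L$-functions then expresses $L(s,\chi,L/K)$ as a product, with integer exponents, of Hecke $L$-functions attached to ray class characters of intermediate number fields. Second, the analytic continuation to a neighborhood of $s=1$ and the non-vanishing at $s=1$ of Hecke $L$-functions are established via class field theory and the Dirichlet-style argument familiar from primes in arithmetic progressions.

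Finally, one invokes a Tauberian theorem---the Wiener--Ikehara theorem, or a refinement handling Dirichlet series with an isolated simple pole and nonvanishing boundary---to upgrade the analytic information about the $L$-functions near $s=1$ to the natural-density statement asserted in the theorem. The main obstacle throughout is the non-vanishing of $L(s,\chi,L/K)$ at $s=1$ for nontrivial $\chi$; once this is in hand, the remaining steps are essentially formal, and the whole scheme parallels (and generalizes) Dirichlet's proof of infinitely many primes in arithmetic progressions.
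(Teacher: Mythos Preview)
Your approach is essentially the one the paper takes (via its Theorem~\ref{thm:Lfunc} and Corollary~\ref{cor:artinL}): reduce to showing that the Artin $L$-function attached to each nontrivial irreducible representation of $G$ is holomorphic and nonvanishing on $\Re(s)\ge 1$, then conclude equidistribution by a Tauberian argument. The paper abstracts the character-sum and Tauberian steps into a general equidistribution criterion for compact groups (Peter--Weyl plus Theorem~\ref{thm:Lfunc}) and simply cites Artin for the analytic input, whereas you sketch the classical reduction to Hecke $L$-functions via Brauer induction (or Chebotarev's cyclic reduction); these are the same proof in different packaging. One small correction: your displayed orthogonality formula for $\ifunc_C$ is valid only when $C$ is a single conjugacy class (with $g_0$ a representative); for a general conjugation-stable $C$ you should sum over representatives of the classes contained in $C$, or simply reduce to the single-class case by additivity of both sides.
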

\begin{proof}
See Corollary~\ref{cor:artinL} in Section \ref{lec:equidistribution}.
\end{proof}

\begin{remark}\label{rem:chebotarev}
In Theorem~\ref{thm:chebotarev} the asymptotic ratio on the left depends only on primes of inertia degree 1 (those with prime residue field), since these make up all but a negligible proportion of the primes~$\p$ for which $N(\p)\le B$.  Taking $C=\{1_G\}$ shows that a constant proportion of the primes of $K$ split completely in $L$ and in particular have prime residue fields; this special case is already implied by the Frobenius density theorem, which was proved much earlier (in terms of Dirichlet density).
In our statement of Theorem~\ref{thm:chebotarev} we do not bother to exclude primes of~$K$ that are ramified in~$L$ because no matter what value $\conj_L(\Frob_\p)$ takes on these primes it will not change the limiting ratio.
\end{remark}

In our example with $f(x)=x^3-x+1$, one finds that $G_f:=\rho_f(\Qbar/\Q)$ is isomorphic to $S_3$, the Galois group of the splitting field of $f(x)$.  Its three conjugacy classes are represented by the matrices
\[
\begin{bmatrix}0&1&0\\0&0&1\\1&0&0\end{bmatrix},\qquad
\begin{bmatrix}1&0&0\\0&0&1\\0&1&0\end{bmatrix},\qquad
\begin{bmatrix}1&0&0\\0&1&0\\0&0&1\end{bmatrix},\qquad
\]
with traces 0, 1, 3.  The corresponding conjugacy classes have cardinalities 2, 3, 1, respectively, thus
\[
c_0=1/3,\quad c_1=1/2,\quad c_2=0,\quad c_3=1/6,
\]
as we conjectured.

If we endow the group $G_f$ with the discrete topology it becomes a compact group, and therefore has a \emph{Haar measure} $\mu$ that is uniquely determined once we normalize it so that $\mu(G_f)=1$ (which we always do).
Recall that the Haar measure of a compact group $G$ is a translation-invariant Radon measure (so $\mu(gS)=\mu(Sg)=\mu(S)$ for any measurable set $S$ and $g\in G$), and is unique up to scaling.\footnote{For locally compact groups $G$ one distinguishes left and right Haar measures, but the two coincide when $G$ is compact; see \cite{DS14} for more background on Haar measures.}
For finite groups the Haar measure $\mu$ is just the normalized counting measure.
We can compute the expected value of trace (and many other statistical quantities of interest) by integrating against the Haar measure, which in this case amounts to summing over the finite group $G_f$:
\[
\Exp[\tr] = \int_{G_f}\!\! \tr\,\mu = \frac{1}{\#G_f}\sum_{g\in G_f}\tr(g) = \sum_{i=0}^d c_ii.
\]
The Chebotarev density theorem implies that this is also the average value of $N_f(p)$, that is,
\[
\lim_{B\to\infty}\frac{\sum_{p\le B} N_f(p)}{\sum_{p\le B}1} = \Exp[\tr].
\]
This average is $1$ in our example, because $f(x)$ is irreducible; see Exercise~\ref{ex:Nfpavg}.

The quantities $c_i$ define a probability distribution on the set $\{\tr(g):g\in G_f\}$ of traces that we can also view as a probability distribution on the set $\{N_f(p):p\text{ prime}\}$.
Picking a random prime $p$ in some large interval $[1,B]$ and computing $N_f(p)$ is the same thing as picking a random matrix $g$ in $G_f$ and computing $\tr(g)$.
More precisely, the sequence $(N_f(p))_p$ indexed by primes~$p$ is \emph{equidistributed} with respect to the pushforward of the Haar measure $\mu$ under the trace map.
We discuss the notion of equidistribution more generally in the Section \ref{lec:equidistribution}.

\subsection{Moment sequences}
There is another way to characterize the probability distribution on $\tr(g)$ given by the $c_i$; we can compute its \emph{moment sequence}:
\[
\Mom[\tr] \coloneqq (\Exp[\tr^n])_{n\ge 0},
\]
where
\[
\Exp[\tr^n] = \int_{G_f}\tr^n\mu.
\]
It might seem silly to include the zeroth moment $\Exp[\tr^0]=\Exp[1]=1$, but in Section~\ref{lec:STaxioms} we will see why this convention is useful.
In our example we have the moment sequence
\[
\Mom[\tr] = (1,1,2,5,14,41,\ldots,\tfrac{1}{2}(3^{n-1}+1),\ldots).
\]
The sequence $\Mom[\tr]$ uniquely determines\footnote{Not all moment sequences uniquely determine an underlying probability distribution, but all the moment sequence we shall consider do (because they satisfy \emph{Carleman's condition} \cite[p. 126]{Koo98}, for example).} the distributions of traces and thus captures all the information encoded in the $c_i$.
It may not seem very useful to replace a finite set of rational numbers with an infinite sequence of integers, but when dealing with continuous probability distributions, as we are forced to do as soon as we leave our weight zero setting, moment sequences are a powerful tool.

If we pick another cubic polynomial $f\in \Z[x]$, we will typically obtain the same result as we did in our example; when ordered by height almost all cubic polynomials $f$ have Galois group $G_f\simeq S_3$.
But there are exceptions: if $f$ is not irreducible over $\Q$ then $G_f$ will be isomorphic to a proper subgroup of~$S_3$, and this also occurs when the splitting field of $f$ is a cyclic cubic extension (this happens precisely when $\disc(f)$ is a square in $\Q^\times$; the polynomial $f(x)=x^3-3x-1$ is an example).
Up to conjugacy there are four subgroups of $S_3$, each corresponding to a different distribution of $N_f(p)$:
\medskip

\begin{center}
\begin{tabular}{lcccccl}
$f(x)$ & $G_f$ & $c_0$ & $c_1$ & $c_2$ & $c_3$ & $\Mom[\tr]$\\\midrule
$x^3-x$ & $1$ & 0 & 0 & 0 & 1 & $(1,3,9,27,81,\ldots)$\\
$x^3+x$ & $C_2$ & 0 & 1/2 & 0 & 1/2 & $(1,2,5,14,41,\ldots)$\\
$x^3-3x-1$ & $C_3$ & 2/3 & 0 & 0 & 1/3 & $(1,1,3,19,27,\ldots)$\\
$x^3-x+1$ & $S_3$ & 1/3 & 1/2 & 0 & 1/6 & $(1,1,2,\,\ 5,14,\ldots)$\\\bottomrule
\end{tabular}
\end{center}
\bigskip

One can do the same with polynomials of degree $d>3$.
For irreducible polynomials of degree $d\le 16$ the results are exhaustive: for every transitive subgroup $G$ of $S_d$ the \href{http://galoisdb.math.upb.de/home}{database} of Kl\"uners and Malle \cite{KM01} contains at least one irreducible monic polynomial $f\in \Z[x]$ of degree $d$ with $G_f\simeq G$ (as permutation groups).
It is an open question whether this can be done for all $d$ (even in principle); indeed, the case $d=17$ remains open at this time.  One can also ask this question for non-irreducible polynomials.  Of course the Galois group of any polynomial $f$ is isomorphic to the Galois group of some irreducible polynomial~$g$, but the degree of $g$ might need to be larger than that of~$f$.

\subsection{Zeta functions}
For polynomials $f$ of degree $d=3$ there is a one-to-one correspondence between subgroups of $S_d$ and distributions of $N_f(p)$.
This is not true for $d\ge 4$.
For example, the polynomials $f(x)=x^4-x^3+x^2-x+1$ with $G_f\simeq C_4$ and $g(x)=x^4-x^2+1$ with $G_g\simeq C_2\times C_2$ both have $c_0=3/4$, $c_1=c_2=c_3=0$, and $c_4=1/4$, corresponding to the moment sequence $\Mom[\tr]=(1,1,4,16,64,\ldots)$.

We can distinguish these cases if, in addition to considering the distribution of $N_f(p)$, we also consider the distribution of
\[
N_f(p^r)\coloneqq \#\{x\in \F_{p^r}:f_p(x)=0\}
\]
for integers $r\ge 1$.
In our quartic example we have $N_g(p^2)=4$ for almost all $p$, whereas $N_f(p^2)$ is $4$ or~$2$ depending on whether $p$ is a square modulo 5 or not.
In terms of the matrix group $G_f$ we have 
\begin{equation}\label{eq:Nfpr}
N_f(p^r) =\tr\bigl(\rho_f(\Frob_p)^r\bigr)
\end{equation}
for all primes $p$ that do not divide $\disc(f)$.
To see this, note that the permutation matrix $\rho_f(\Frob_p)^r$ corresponds to the permutation of the roots of $f_p(x)$ given by the $r$th power of the Frobenius automorphism $x\mapsto x^p$.
Its fixed points are precisely the roots of $f_p(x)$ that lie in $\F_{p^r}$; taking the trace counts these roots, and this yields $N_f(p^r)$.

This naturally leads to the definition of the local \emph{zeta function} of $f$ at $p$:
\begin{equation}\label{eq:zetaf}
Z_{f_p}(T)\coloneqq \exp \left(\sum_{r=1}^\infty N_f(p^r)\frac{T^r}{r}\right),
\end{equation}
which can be viewed as a generating function for the sequence $(N_f(p),N_f(p^2),N_f(p^3),\ldots)$.  This particular form of generating function may seem strange when first encountered, but it has some very nice properties.
For example, if $f,g\in \Z[x]$ are squarefree polynomials with no common factor, then their product $fg$ is also square free, and for all $p\nmid \disc(fg)$ we have
\[
Z_{(fg)_p} = Z_{f_pg_p}=Z_{f_p}Z_{g_p}.
\] 

\begin{remark}
The identity \eqref{eq:Nfpr} can be viewed as a special case of the Grothendieck-Lefschetz trace formula.
It allows us to express the zeta function $Z_{f_p}(T)$ as a sum over powers of the traces of the image of $\Frob_p$ under the Galois representation $\rho_f$.
In general one considers the trace of the Frobenius endomorphism acting on \'etale cohomology, but in dimension zero the only relevant cohomology is $H^0$.
\end{remark}

While defined as a power series, in fact $Z_{f_p}(T)$ is a rational function of the form
\[
Z_{f_p}(T) = \frac{1}{L_p(T)},
\]
where $L_p(T)$ is an integer polynomial whose roots lie on the unit circle.
This can be viewed as a consequence of the Weil conjectures in dimension zero,\footnote{Provided one accounts for the fact that $f(x)=0$ does not define an irreducible variety unless $\deg(f)=1$; in this case $N_f(p^r)=1$ and $L_p(T)=1-T$, which is consistent with the usual formulation of the Weil conjectures (see Theorem~\ref{thm:deligne}).} but in fact it follows directly from \eqref{eq:Nfpr}.
Indeed, for any matrix $A\in \GL_d(\C)$ we have the identity
\begin{equation}\label{eq:matid}
\exp\left (\sum_{r=1}^\infty \tr(A^r) \frac{T^r}{r}\right) = \det(1-AT)^{-1},
\end{equation}
which can be proved by expressing the coefficients on both sides as symmetric functions in the eigenvalues of $A$; see Exercise~\ref{ex:matid}.
Applying \eqref{eq:Nfpr} and \eqref{eq:matid} to the definition of $Z_{f_p}(T)$ in \eqref{eq:zetaf} yields
\[
Z_{f_p}(T) = \frac{1}{\det(1-\rho_f(\Frob_p)T)},
\]
thus
\[
L_p(T) = \det(1-\rho_f(\Frob_p)T).
\]

The polynomial $L_p(T)$ is precisely the polynomial that appears in the Euler factor at $p$ of the (partial) \emph{Artin $L$-function} $L(\rho_f,s)$ for the representation $\rho_f$:
\[
L(\rho_f,s)\coloneqq \prod_p L_p(p^{-s})^{-1},
\]
at least for primes $p$ that do not divide $\disc(f)$; for the definition of the Euler factors at ramified primes (and the Gamma factors at archimedean places), see \cite[Ch. 2]{MM11}.\footnote{The alert reader will note that primes dividing the discriminant of $f$ need not ramify in its splitting field; we are happy to ignore these primes as well, just as we may ignore primes of bad reduction for a curve that are good primes for its Jacobian.}
The Euler product for $L(\rho_f,s)$ defines a function that is holomorphic and nonvanishing on $\Re(s)>1$.  We shall not be concerned with the Euler factors at ramified primes, other than to note that they are holomorphic and nonvanishing on $\Re(s)>1$.

\begin{remark}
Every representation $\rho\colon \Gal(\Qbar/\Q)\to\GL_d(\C)$ with finite image gives rise to an Artin $L$-function $L(\rho,s)$, and Artin proved that every decomposition of $\rho$ into sub-representations gives rise to a corresponding factorization of $L(\rho,s)$ into Artin $L$-functions of lower degree.
The representation $\rho_f$  we have defined is determined by the permutation action of $\Gal(\Qbar/\Q)$ on the formal $\C$-vector space with basis elements corresponding to roots of $f$.
The linear subspace spanned by the sum of the basis vectors is fixed by $\Gal(\Qbar/\Q)$, so for $d>1$ we can always decompose $\rho_f$ as the sum of the trivial representation and a representation of dimension $d-1$, in which case $L(\rho_f,s)$ is the product of the Riemann zeta function (the Artin $L$-function of the trivial representation), and an Artin $L$-function of degree $d-1$.
The Artin $L$-functions $L(\rho_f,s)$ we have defined are thus imprimitive for $\deg f > 1$.
\end{remark}

Returning to our interest in equidistribution, the Haar measure $\mu$ on $G_f=\rho_f(\Gal(\Qbar/\Q))$ allows us to determine the distribution of $L$-polynomials $L_p(T)$ that we see as $p$ varies.
Each polynomial $L_p(T)$ is the \emph{reciprocal polynomial} (obtained by reversing the coefficients) of the characteristic polynomial of $\rho_f(\Frob_p)$.
If we fix a polynomial $P(T)$ of degree $d=\deg f$, and pick a prime $p$ at random from some large interval, the probability that $L_p(T)=P(T)$ is equal to the probability that the reciprocal polynomial $T^dP(1/T)$ is the characteristic polynomial of a random element of $G_f$ (this probability will be zero unless $P(T)$ has a particular form; see Exercise~\ref{ex:Lpshape}).

\begin{remark}
For $d\le 5$ the distribution of characteristic polynomials uniquely determines each subgroup of $S_d$ (up to conjugacy).
This is not true for $d\ge 6$, and for $d\ge 8$ one can find non-isomorphic subgroups of $S_d$ with the same distribution of characteristic polynomials; the transitive permutation groups \texttt{8T10} and \texttt{8T11} which arise for $x^8-13x^6+44x^4-17x^2+1$ and $x^8-x^5-2x^4+4x^2+x+1$ (respectively) are an example.
\end{remark}

\subsection{Computing zeta functions in dimension zero}
Let us now briefly address the practical question of efficiently computing the zeta function $Z_{f_p}(T)$, which amounts to computing the polynomial $L_p(T)$.
It suffices to compute the integers $N_f(p^r)$ for $r\le d$, which is equivalent to determining the degrees of the irreducible polynomials appearing in the factorization of $f_p(x)$ in $\Fp[x]$. These determine the cycle type, and therefore the conjugacy class, of the permutation of the roots of $f_p(x)$ induced by the action of the Frobenius automorphism $x\mapsto x^p$, which in turn determines the characteristic polynomial of $\rho_f(\Frob_p)$ and the $L$-polynomial $L_p(T)=\det(1-\rho_f(\Frob_p)T)$; see Exercise~\ref{ex:Lpshape}.
To determine the factorization pattern of $f_p(x)$, one can apply the following algorithm.

\begin{algorithm}\label{alg:facpat}
Given a squarefree polynomial $f\in \Fp[x]$ of degree $d>1$, compute the number $n_i$ of irreducible factors of $f$ in $\Fp[x]$ of degree $i$, for $1\le i\le d$ as follows:
\end{algorithm}

\begin{enumerate}[1.]
\setlength{\itemsep}{2pt}
\item Let $g_1(x)$ be $f(x)$ made monic and put $r_0(x):=x$.
\item For $i$ from $1$ to $d$:
\begin{enumerate}[a.]
\setlength{\itemsep}{2pt}
\item If $i > \deg(g_i)/2$ then for $i\le j\le d$ put $n_j:=1$ if $j=\deg(g_i)$ and $n_j:=0$ otherwise, and then proceed to step~3.
\item Using binary exponentiation in the ring $\Fp[x]/(g_i)$, compute $r_i \coloneqq r_{i-1}^p\bmod g_i$.
\item Compute $h_i(x)\coloneqq\gcd(g_i,r_i(x)-x)=\gcd(g_i(x),x^{p^i}-x)$ using the Euclidean algorithm.
\item Compute $n_i \coloneqq \deg(h_i)/i$ and $g_{i+1} \coloneqq g_i/h_i$ using exact division.
\item If $\deg(g_{i+1}) = 0$ then put $n_j\coloneqq 0$ for $i < j\le d$ and proceed to step 3.
\end{enumerate}
\item Output $n_1,\ldots,n_d$.
\end{enumerate}

\noindent
Algorithm~\ref{alg:facpat} makes repeated use of the fact that the polynomial
\[
x^{p^i}-x=\prod_{{}\ a\,\in\, \F_{p^i}} (x-a)
\]
is equal to the product of all irreducible monic polynomials of degree dividing $i$ in $\Fp[x]$.
By starting with $i=1$ and removing all factors of degree $i$ as we go, we ensure that each $h_i$ is a product of irreducible polynomials of degree~$i$.
Using fast algorithms for integer and polynomial arithmetic and the fast Euclidean algorithm (see \cite[\S8-11]{GG13}, for example), one can show that this algorithm uses $O((d\log p)^{2+o(1)})$ bit operations, a running time that is quasi-quadratic in the $O(d\log p)$ bit-size of its input $f\in \Fp[x]$.\footnote{One can improve this to $O\bigl(d^{1.5+o(1)}(\log p)^{1+o(1)}+d^{1+o(1)}(\log p)^{2+o(1)}\bigr)$ via \cite{KU11}.  In our setting $d$ is fixed and $\log p$ is tending to infinity, so this is not an asymptotic improvement, but it does provide a constant factor improvement for large $d$.}
In practical terms, it is extremely efficient.
For example, the table of $c_i(B)$ values for our example polynomial $f(x)=x^3-x+1$ with $B=10^{12}$ took less than two minutes to create using the \texttt{smalljac} software library \cite{KS08,smalljac}, which includes an efficient implementation of basic finite field arithmetic.
The NTL \cite{NTL} and FLINT \cite{Har10,FLINT} libraries also incorporate variants of this algorithm, as do the computer algebra systems Sage \cite{Sage} and Magma \cite{Magma}.

\begin{remark}
Note that Algorithm~\ref{alg:facpat} does \emph{not} output the factorization of $f(x)$, just the degrees of its irreducible factors.
It can be extended to a probabilistic algorithm that outputs the complete factorization of $f(x)$ (see \cite[Alg. 14.8]{GG13}, for example), with an expected running time that is also quasi-quadratic.
But no deterministic polynomial-time algorithm for factoring polynomials over finite fields is known, not even for $d=2$.
This is a famous open problem.
One approach to solving it is to first prove the generalized Riemann hypothesis (GRH), which would address the case $d=2$ and many others, but it is not even known whether the GRH is sufficient to address all cases.\footnote{If you succeed with even a special case of this first step, the Clay institute will help \href{http://www.claymath.org/millennium-problems/rules-millennium-prizes}{fund} the remaining work.}
\end{remark}

\subsection{Arithmetic schemes}

We now want to generalize our first example.
Let us replace the equation $f(x)=0$ with an \emph{arithmetic scheme}  $X$, a scheme of finite type over $\Z$; the case we have been considering is $X=\Spec A$, where $A=\Z[x]/(f)$.
For each prime $p$ the fiber $X_p$ of $X\to \Spec \Z$ is a scheme of finite type over $\Fp$, and we let $N_X(p)\coloneqq X_p(\Fp)$ count its $\Fp$-points; equivalently, we may define $N_X(p)$ as the number of closed points (maximal ideals) of $X$ whose residue field has cardinality $p$, and similarly define $N_X(q)$ for prime powers $q=p^r$.
The local zeta function of $X$ at $p$ is then defined as
\[
Z_{X_p}(T) \coloneqq \exp\left(\sum_{r=1}^\infty N_X(p^r)\frac{T^r}{r}\right).
\]
These local zeta functions can then be packaged into a single \emph{arithmetic zeta-function}
\[
\zeta_X(s)\coloneqq \prod_p Z_{X_p}(p^{-s}).
\]
In our example with $X=\Spec \Z[x]/(f)$, the zeta function $\zeta_X(s)$ coincides with the Artin $L$-function $L(\rho_f,s)=\prod L_p(s)^{-1}$ up to a finite set of factors at primes $p$ that divide $\disc(f)$.

The definitions above generalize to any number field $K$: replace $\Q$ by~$K$, replace $\Z$ by $\Z_K$, replace $p$ by a prime $\p$ of $K$ (nonzero prime ideal of $\Z_K$), replace $\Fp\simeq \Z/p\Z$ by the residue field $\F_\p:=\Z_K/\p$.
When considering questions of equidistribution we order primes $\p$ by their norm $N(\p)\coloneqq \F_\p$ (we may break ties arbitrarily), so that rather that summing over $p\le B$ we sum over $\p$ for which $N(\p)\le B$.

\subsection{A second example}\label{sec:secondexample}

We now leave the world of Artin motives, which are motives of weight 0, and consider the simplest example in weight 1, an elliptic curve $E/\Q$.
This is the setting in which the Sato--Tate conjecture was originally formulated.
Every elliptic curve $E/\Q$ can be written in the form
\[
E\colon y^2=x^3+Ax+B,
\]
with $A,B\in \Z$.
This equation is understood to define a smooth projective curve in $\P^2$ (homogenize the equation by introducing a third variable $z$), which has a single projective point $P_\infty\coloneqq (0:1:0)$ at infinity that we take as the identity element of the group law on $E$.
Recall that an elliptic curve is not just a curve, it is an abelian variety, and comes equipped with a distinguished rational point corresponding to the identity; by applying a suitable automorphism of~$\P^2$ we can always take this to be the point $P_\infty$.

The group operation on $E$ can be defined via the usual chord-and-tangent law (three points on a line sum to zero), which can be used to derive explicit formulas with coefficients in $\Q$, or in terms of the divisor class group $\Pic^0(E)$ (divisors of degree zero modulo principal divisors), in which every divisor class can be uniquely represented by a divisor of the form $P-P_\infty$, where $P$ is a point on the curve.
This latter view is more useful in that it easily generalizes to curves of genus $g>1$, whereas the chord-and-tangent law does not.
The Abel--Jacobi map $P\mapsto P-P_\infty$ gives a bijection between points on~$E$ and points on $\Jac(E)$ that commutes with the group operation, so the two approaches are equivalent.

For each prime $p$ that does not divide the discriminant $\Delta \coloneqq -16(4A^3+27B^2)$ we can reduce our equation for $E$ modulo~$p$ to obtain an elliptic curve $E_p/\Fp$; in this case we say that $p$ is a \emph{prime of good reduction for~$E$} (or simply a \emph{good prime}).
We should note that the discriminant $\Delta$ is not necessarily minimal; the curve~$E$ may have another model with good reduction at primes that divide $\Delta$ (possibly including $2$), but we are happy to ignore any finite set of primes, including those that divide $\Delta$.\footnote{All elliptic curves over $\Q$ have a global minimal model for which the primes of bad reduction are precisely those that divide the discriminant, but this model is not necessarily of the form $y^2=x^3+Ax+B$.
Over general number fields $K$ global minimal models do not always exist (they do when $K$ has class number one).}

For every prime $p$ of good reduction for $E$ we have
\[
N_E(p) \coloneqq \#E_p(\Fp) = p+1-t_p,
\]
where the integer $t_p$ satisfies the \emph{Hasse-bound} $|t_p|\le 2\sqrt{p}$.
In contrast to our first example, the integers $N_E(p)$ now tend to infinity with $p$: we have $N_E(p) = p+1 +O(\sqrt{p})$.  In order to study how the error term varies with $p$ we want to consider the normalized traces
\[
x_p\coloneqq t_p/\sqrt{p}\in [-2,2].
\]
We are now in a position to conduct the following experiment: given an elliptic curve $E/\Fp$, compute~$x_p$ for all good primes $p\le B$ and see how the $x_p$ are distributed over the real interval $[-2,2]$.

One can see an example for the elliptic curve $E:y^2=x^3+x+1$ in Figure~\ref{fig:g1generic}, which shows a histogram whose $x$-axis spans the interval $[-2,2]$.
This interval is subdivided into approximately $\sqrt{\pi(B)}$ subintervals, each of which contains a bar representing the number of $x_p$ (for $p\le B$) that lie in the subinterval.
The gray line shows the height of the uniform distribution for scale (note that the vertical and horizontal scales are not the same).
For $0\le n\le 10$, the moment statistics
\[
M_n\coloneqq\frac{\sum_{p\le B} x_p^n}{\sum_{p\le B} 1},
\]
are shown below the histogram. They appear to converge to the integers $1,0,1,0,2,0,5,0,14,0,42$, which is the start of sequence \Aseq{A126120} in the Online Encyclopedia of Integer Sequences (OEIS) \cite{OEIS}).

\begin{figure}[h!]
\begin{center}
\animategraphics[width=\textwidth,poster=last]{1.5}{g1_generic_a1_}{0}{30}
\caption{Click image to animate (requires Adobe Reader), or visit this \href{http://math.mit.edu/~drew/g1_D1_a1f.gif}{web page}.}\label{fig:g1generic}
\end{center}
\end{figure}

The Sato--Tate conjecture for elliptic curves over $\Q$ (now a theorem) implies that for almost all $E/\Q$, whenever we run this experiment we will see the asymptotic distribution of Frobenius traces visible in Figure \ref{fig:g1generic}, with moment statistics that converge to the same integer sequence.
In order to make this conjecture precise, let us first explain where the conjectured distribution comes from.
In our first example we had a compact matrix group $G_f$ associated to the scheme $X=\Spec\Z[x]/(f)$ whose Haar measure governed the distribution of $N_f(p)$.
In fact we showed that more is true: there is a direct relationship between characteristic polynomials of elements of $G_f$ and the $L$-polynomials $L_p(T)$ that appear in the local zeta functions $Z_{f_p}(T)$.

The same is true with our elliptic curve example.  In order to identify a candidate group $G_E$ whose Haar measure controls the distribution of normalized Frobenius traces $x_p$ we need to look at the local zeta functions $Z_{E_p}(T)$.
Let us recall what the Weil conjectures \cite{Weil49} (proved by Deligne \cite{Del74,Del80}) tell us about the zeta function of a variety over a finite field.
The case of one-dimensional varieties (curves) was proved by Weil \cite{Weil45}, who also proved an analogous result for abelian varieties \cite{Weil46}.  This covers all the cases we shall consider, but let us state the general result.
Recall that for a compact manifold $X$ over~$\C$, the \emph{Betti number} $b_i$ is the rank of the singular homology group $H_i(X,\Z)$, and the \emph{Euler characteristic} $\chi$ of $X$ is defined by $\chi\coloneqq\sum (-1)^ib_i$.

\begin{theorem}[\textsc{Weil Conjectures}]\label{thm:deligne}
Let $X$ be a geometrically irreducible non-singular projective variety of dimension $n$ defined over a finite field $\Fq$ and define the zeta function
\[
Z_X(T)\coloneqq \exp\left(\sum_{r=1}^\infty N_X(q^r)\frac{T^r}{r}\right),
\]
where $N_X(q^r)\coloneqq \#X(\F_{q^r})$.
The following hold:
\begin{enumerate}[{\rm (i)}]
\setlength{\itemsep}{2pt}
\item \textbf{Rationality}: $Z_X(T)$ is a rational function of the form
\[
Z_X(T) = \frac{P_1(T)\cdots P_{2n-1}(T)}{P_0(T)\cdots P_{2n}(T)},
\]
with $P_i\in 1+ T\Z[T]$.
\item \textbf{Functional Equation}: the roots of $P_i(T)$ are the same as the roots of $T^{\deg P_{2n-i}}P_{2n-i}(1/(q^nT))$.\footnote{Moreover, one has $Z_X(T)=\pm q^{-n\chi/2}T^{-\chi}Z_X(1/(q^nT))$, where $\chi$ is the Euler characteristic of $X$, which is defined as the intersection number of the diagonal with itself in $X\times X$.}
\item \textbf{Riemann Hypothesis}: the complex roots of $P_i(T)$ all have absolute value $q^{-i/2}$.
\item \textbf{Betti Numbers}: if $X$ is the reduction of a non-singular variety $Y$ defined over a number field $K\subseteq \C$, then the degree of $P_i$ is equal to the Betti number $b_i$ of $Y(\C)$.
\end{enumerate}
\end{theorem}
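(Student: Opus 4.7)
The plan is to interpret everything through $\ell$-adic \'etale cohomology. Fix a prime $\ell\neq\operatorname{char}\Fq$ and consider the finite-dimensional $\Q_\ell$-vector spaces $H^i_{\text{\'et}}(X_{\Fqbar},\Q_\ell)$ for $0\le i\le 2n$, on which the geometric Frobenius $F$ acts. The Grothendieck--Lefschetz trace formula gives
\[
N_X(q^r)=\sum_{i=0}^{2n}(-1)^i\tr\bigl(F^r \mid H^i_{\text{\'et}}(X_{\Fqbar},\Q_\ell)\bigr)
\]
for every $r\ge 1$. Setting $P_i(T)\coloneqq\det\bigl(1-FT \mid H^i_{\text{\'et}}(X_{\Fqbar},\Q_\ell)\bigr)$ and applying the matrix identity \eqref{eq:matid} term by term, one reads off the rational factorization in (i) with $P_i\in 1+T\Q_\ell[T]$. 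That the $P_i$ actually have integer coefficients (and are independent of $\ell$) is not formal; it will follow a posteriori from (iii) once one knows that the eigenvalues of $F$ on $H^i$ have absolute value $q^{i/2}$ and are thus separated across different $i$, so that the integer polynomial $Z_X(T)$ has a unique factorization of this shape.

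For (ii), I would invoke Poincar\'e duality for \'etale cohomology: there is a perfect pairing
\[
H^i_{\text{\'et}}(X_{\Fqbar},\Q_\ell)\times H^{2n-i}_{\text{\'et}}(X_{\Fqbar},\Q_\ell)\longrightarrow H^{2n}_{\text{\'et}}(X_{\Fqbar},\Q_\ell)\cong \Q_\ell(-n),
\]
under which $F$ acting on $H^i$ is adjoint to $q^nF^{-1}$ acting on $H^{2n-i}$. Consequently the multisets of eigenvalues $\{\alpha_{i,j}\}$ and $\{q^n/\alpha_{2n-i,k}\}$ agree, which is exactly the symmetry claim about the roots of $P_i$. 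For (iv), I would use the smooth and proper base change theorem to transport the cohomology of $X$ to that of a lift $Y$ over a number ring, and then apply Artin's comparison theorem $H^i_{\text{\'et}}(Y_{\C},\Q_\ell)\cong H^i_{\text{sing}}(Y(\C),\Q)\otimes\Q_\ell$ to conclude $\deg P_i=\dim H^i_{\text{\'et}}=b_i$.

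The hard part, by an enormous margin, is (iii), the Riemann hypothesis; this is Deligne's theorem, and the entire machinery above is due to Grothendieck's school, while (iii) was open for years afterward. I would not attempt to reprove it; instead I would cite \cite{Del74}. For the reader's orientation, Deligne's argument fibers $X$ by a Lefschetz pencil, reducing to controlling the middle cohomology of a smooth projective family of hypersurface sections; one then bounds the eigenvalues of Frobenius by applying a Rankin--Selberg style tensor-power trick to the $L$-function of even symmetric powers of the associated lisse sheaf, combined with Grothendieck's monodromy theorem and the ``big monodromy'' property of Lefschetz pencils to obtain the needed positivity and poles, from which the archimedean bound $|\alpha_{i,j}|=q^{i/2}$ is extracted. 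The main obstacle is precisely this step: every other piece is essentially formal from the six-operations formalism and Poincar\'e duality, whereas (iii) requires the genuinely analytic input of Deligne's weight argument.
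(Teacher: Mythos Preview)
Your sketch is a correct high-level outline of the standard proof via $\ell$-adic \'etale cohomology: the Grothendieck--Lefschetz trace formula and the identity \eqref{eq:matid} yield the rational factorization, Poincar\'e duality gives the functional equation, smooth-proper base change plus Artin comparison handle the Betti numbers, and Deligne's weight argument in \cite{Del74} (refined in \cite{Del80}) establishes the Riemann hypothesis. Your description of Deligne's strategy (Lefschetz pencils, the tensor-power trick on symmetric powers, big monodromy) is accurate in spirit.

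However, there is nothing to compare against: the paper does not prove Theorem~\ref{thm:deligne}. It is stated purely as background and attributed to the literature---the paragraph preceding the theorem cites \cite{Weil49} for the conjectures, \cite{Del74,Del80} for Deligne's proof, and \cite{Weil45,Weil46} for the cases of curves and abelian varieties, which are the only cases the paper actually uses. This is an expository article on Sato--Tate distributions; the Weil conjectures are invoked solely to justify the shape of $Z_{E_p}(T)$ in \eqref{eq:zetaE} and its higher-genus analogues. So your proposal is not wrong, but it is answering a question the paper does not pose: the paper's ``proof'' is simply a citation, and your outline goes well beyond what the paper attempts.
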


The curve $E_p$ is a curve of genus $g=1$, so we may apply the Weil conjectures in dimension $n=1$, with Betti numbers $b_0=b_2=1$ and $b_1=2g=2$.
This implies that its zeta function can be written as
\begin{equation}\label{eq:zetaE}
Z_{E_p}(T) =\frac{L_p(T)}{(1-T)(1-pT)},
\end{equation}
where $L_p\in \Z[T]$ is a polynomial of the form
\[
L_p(T) = pT^2+c_1T+1,
\]
with $|c_1|\le 2\sqrt{p}$ (by the Riemann Hypothesis).
If we expand both sides of \eqref{eq:zetaE} as power series in $\Z[[T]]$ we obtain
\[
1+N_E(p)T+\cdots = 1+(p+1+c_1)T+\cdots,
\]
so we must have $N_E(p)=p+1+c_1$, and therefore
\[
c_1=N_E(p)-p-1=-t_p.
\]
It follows that the single integer $N_E(p)$ completely determines the zeta function $Z_{E_p}(T)$.

Corresponding to our normalization $x_p=t_p/\sqrt{p}$, we define the \emph{normalized $L$-polynomial}
\[
\bar L_p(T) \coloneqq L_p(T/\sqrt{p}) = T^2+a_1T+1,
\]
where $a_1=c_1/\sqrt{p}=-x_p$ is a real number in the interval $[-2,2]$ and the roots of $\bar L_p(T)$ lie on the unit circle.
In our first example we obtained the group $G_f$ as a subgroup of permutation matrices in $\GL_d(\C)$.
Here we want a subgroup of $\GL_2(\C)$ whose elements have eigenvalues that
\begin{enumerate}[(a)]
\item are inverses (by the functional equation);
\item lie on the unit circle (by the Riemann hypothesis).
\end{enumerate}
Constraint (a) makes it clear that every element of $G_E$ should have determinant $1$, so $G_E\subseteq \SL_2(\C)$.
Constraints (a) and (b) together imply that in fact $G_E\subseteq \SU(2)$.
As in the weight zero case, we expect that $G_E$ should in general be as large as possible, that is, $G_E=\SU(2)$.

We now consider what it means for an elliptic curve to be generic.\footnote{The criterion given here in terms of endomorphism rings suffices for elliptic curves (and curves of genus $g\le 3$ or abelian varieties of dimension $g\le 3$), but in general one wants the Galois image to be as large as possible, which is a strictly stronger condition for $g>3$. This issue is discussed further in Section \ref{lec:STgroups}.}
Recall that the endomorphism ring of an elliptic curve $E$ necessarily contains a subring isomorphic to $\Z$, corresponding to the multiplication-by-$n$ maps $P\mapsto nP$.  Here
\[
nP=P+\cdots+P
\]
denotes repeated addition under the group law, and we take the additive inverse if $n$ is negative.
For elliptic curves over fields of characteristic zero, this typically accounts for all the endomorphisms, but in special cases the endomorphism ring may be larger, in which case it contains elements that are not multiplication-by-$n$ maps but can be viewed as ``multiplication-by-$\alpha$" maps for some $\alpha\in \C$.
One can show that the minimal polynomials of these extra endomorphisms are necessarily quadratic, with negative discriminants, so such an $\alpha$ necessarily lies in an imaginary quadratic field $K$, and in fact $\End(E)\otimes_\Z\Q\simeq K$.
When this happens we say that $E$ has \emph{complex multiplication} (CM) by $K$ (or more precisely, by the order in $\Z_K$ isomorphic to $\End(E)$).

We can now state the Sato-Tate conjecture, as independently formulated in the mid 1960's by Mikio Sato (based on numerical data) and John Tate (as an application of what is now known as the Tate conjecture \cite{Tate63}), and finally proved in the late 2000's by Richard Taylor et al. \cite{BLGG11,BLGHT11,HSBT10}.

\begin{theorem}[\textsc{Sato--Tate conjecture}]\label{thm:ST}
Let $E/\Q$ be an elliptic curve without $CM$.
The sequence of normalized Frobenius traces $x_p$ associated to $E$ is equidistributed with respect to the pushforward of the Haar measure on $\SU(2)$ under the trace map.
In particular, for every subinterval $[a,b]$ of $[-2,2]$ we have
\[
\lim_{B\to \infty} \frac{\#\{p\le B:x_p\in [a,b]\}}{\#\{p\le B\}} = \frac{1}{2\pi}\int_a^b \sqrt{4-t^2}\,dt.
\]
\end{theorem}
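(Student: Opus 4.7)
The plan is to reduce the equidistribution statement to analytic properties of a family of L-functions and then invoke the deep modularity results cited in the attribution. Since $\SU(2)$ is a compact Lie group, the Weyl equidistribution criterion says that a sequence of conjugacy classes in $\SU(2)$ is equidistributed with respect to Haar measure if and only if it is equidistributed against every non-trivial irreducible character. The irreducible representations of $\SU(2)$ are the symmetric powers $\operatorname{Sym}^n$ of the standard representation, for $n\ge 0$, so the task reduces to showing that for every $n\ge 1$
\[
\lim_{B\to\infty} \frac{1}{\#\{p\le B\}} \sum_{p\le B} \chi_n(x_p) = 0,
\]
where $\chi_n$ is the character of $\operatorname{Sym}^n$, expressible as the Chebyshev polynomial $U_n(x_p/2)$ in the trace variable.

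Next I would translate this Chebyshev sum into a statement about Dirichlet series. Writing the normalized Frobenius eigenvalues as $\alpha_p=e^{i\theta_p}$ and $\beta_p=e^{-i\theta_p}$ with $\alpha_p+\beta_p=x_p$, the local Euler factor of the symmetric power L-function $L(\operatorname{Sym}^n E,s)$ at a good prime $p$ is
\[
\prod_{j=0}^{n} \bigl(1-\alpha_p^{n-j}\beta_p^{j}p^{-s}\bigr)^{-1},
\]
so $-L'/L$ of $L(\operatorname{Sym}^n E,s)$ unpacks as a Dirichlet series whose contribution from prime arguments is, up to a convergent tail at higher prime powers, precisely $\chi_n(x_p)\log p \cdot p^{-s}$. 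By the standard Wiener–Ikehara Tauberian argument, the desired averaged cancellation follows once one knows, for every $n\ge 1$, that $L(\operatorname{Sym}^n E,s)$ admits a holomorphic and non-vanishing continuation to the closed half-plane $\Re(s)\ge 1$.

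The hard part, and the real content of the theorem, is establishing this analytic behaviour for every symmetric power L-function of a non-CM elliptic curve. This is exactly what the potential automorphy program of Taylor and collaborators \cite{HSBT10,BLGHT11,BLGG11} achieves. One proves that for every $n\ge 1$, the compatible system of Galois representations $\operatorname{Sym}^n \rho_{E,\ell}$ is potentially automorphic: there is a totally real Galois extension $F/\Q$ over which $\operatorname{Sym}^n \rho_{E,\ell}$ corresponds to a regular algebraic cuspidal automorphic representation of $\GL_{n+1}(\mathbf{A}_F)$. The argument proceeds through Taylor–Wiles style automorphy lifting theorems, bootstrapped from modularity of $E$ itself and from auxiliary families of CM-type motives produced via a Moret-Bailly-type argument that guarantees the required residual automorphy after passing to an auxiliary extension. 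Once potential automorphy is in hand, Brauer induction expresses $L(\operatorname{Sym}^n E,s)$ as an integer combination of automorphic L-functions on general linear groups over subfields of $F$, and the Jacquet–Shalika non-vanishing theorem together with Rankin–Selberg theory yields holomorphy and non-vanishing on $\Re(s)\ge 1$. This modularity step is the entire obstacle: the rest of the argument is formal.

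Finally, the explicit semicircle formula on $[-2,2]$ is obtained by computing the pushforward of Haar measure on $\SU(2)$ under the trace map via the Weyl integration formula. Parametrizing conjugacy classes by $\theta\in[0,\pi]$ with eigenvalues $e^{\pm i\theta}$, Haar measure pushes to $(2/\pi)\sin^2\theta\,d\theta$ on $[0,\pi]$, and the substitution $t=2\cos\theta$ converts this to $(2\pi)^{-1}\sqrt{4-t^2}\,dt$ on $[-2,2]$, which is the density in the statement.
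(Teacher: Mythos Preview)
Your proposal is correct and follows essentially the same route as the paper: reduce equidistribution on $\SU(2)$ to holomorphy and nonvanishing of the symmetric-power $L$-functions on $\Re(s)\ge 1$ via an equidistribution criterion (the paper packages your Weyl/Tauberian step into Theorem~\ref{thm:Lfunc}), and then invoke the potential automorphy results of \cite{HSBT10,BLGHT11,BLGG11}. You supply a bit more detail on the automorphy side (Brauer induction, Jacquet--Shalika), whereas the paper simply cites \cite[Theorem~B.2]{BLGHT11} as a black box, but the architecture is identical.
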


We have not defined $x_p$ for primes of bad reduction, but there is no need to do so; this theorem is purely an asymptotic statement.
To see where the expression in the integral comes from, we need to understand the Haar measure on $\SU(2)$ and its pushforward onto the set of conjugacy classes $\conj(\SU(2))$ (in fact we only care about the latter).
A conjugacy class in $\SU(2)$ can be described by an \emph{eigenangle} $\theta\in [0,\pi]$; its eigenvalues are then $e^{\pm i\theta}$ (a conjugate pair on the unit circle, as required).
In terms of eigenangles, the pushforward of the Haar measure to $\conj(\SU(2))$ is given by
\[
\mu = \frac{2}{\pi}\sin^2\theta\, d\theta
\]
(see Exercise~\ref{ex:su2}), and the trace is $t=2\cos\theta$; from this one can deduce the trace measure $\frac{1}{2\pi}\sqrt{4-t^2}dt$ on $[-2,2]$ that appears in Theorem~\ref{thm:ST}.
We can also use the Haar measure to compute the $n$th moment of the trace
\begin{equation}\label{eq:cat}
\Exp[t^n] = \frac{2}{\pi}\int_0^\pi (2\cos\theta)^n\sin^2\theta d\theta = \begin{cases}
0 &\text{if $n$ is odd,}\\
\frac{1}{m+1}\binom{2m}{m} & \text{if $n=2m$ is even},
\end{cases}
\end{equation}
and find that the $2m$th moment is the $m$th Catalan number.\footnote{This gives yet another way to define the Catalan numbers, one that does not appear to be among the 214 listed in \cite{Stan15}.}

\subsection{Exercises}

\begin{exercise}\label{ex:Nfpavg}
Let $f\in \Z[x]$ be a nonconstant squarefree polynomial.  Prove that the average value of $N_f(p)$ over $p\le B$ converges to the number of irreducible factors of $f$ in $\Z[x]$ as $B\to\infty$.
\end{exercise}
\begin{exercise}\label{ex:matid}
Prove that the identity in \eqref{eq:matid} holds for all matrices $A\in \GL_d(\C)$.
\end{exercise}
\begin{exercise}\label{ex:Lpshape}
Let $f_p\in \Fp[x]$ denote a squarefree polynomial of degree $d>0$ and let $L_p(T)$ denote the denominator of the zeta function $Z_{f_p}(T)$.
We know that the roots of $L_p(T)$ lie on the unit circle in the complex plane; show that in fact each is an $n$th root of unity for some $n\le d$.
Then give a one-to-one correspondence between (i) cycle-types of degree-$d$ permutations, (ii) possible factorization patterns of $f_p$ in $\Fp[x]$, and (iii) the possible polynomials $L_p(T)$.
\end{exercise}
\begin{exercise}
Construct a monic squarefree quintic polynomial $f\in\Z[x]$ with no roots in $\Q$ such that $f_p(x)$ has a root in $\Fp$ for every prime $p$.
Compute $c_0,\ldots,c_5$ and $G_f$.
\end{exercise}
\begin{exercise}
Let $X$ be the arithmetic scheme $\Spec\Z[x,y]/(f,g)$, where
\[
f(x,y)\coloneqq y^2-2x^3+2x^2-2x-2,\qquad g(x,y)\coloneqq 4x^2-2xy+y^2-2.
\]
By computing $Z_{X_p}(T)=L_p(T)^{-1}$ for sufficiently many small primes~$p$, construct a list of the polynomials $L_p\in\Z[T]$ that you believe occur infinitely often, and estimate their relative frequencies.
Use this data to derive a candidate for the matrix group $G_X\coloneqq \rho_X(\Gal(\Qbar/\Q)$, where $\rho_X$ is the Galois representation defined by the action of $\Gal(\Qbar/\Q)$ on $X(\Qbar)$.  You may wish to use of computer algebra system such as \href{https://cloud.sagemath.com/}{Sage} \cite{Sage} or \href{http://magma.maths.usyd.edu.au/calc/}{Magma} or \cite{Magma} to facilitate these calculations.
\end{exercise}

\section{Equidistribution, L-functions, and the Sato-Tate conjecture for elliptic curves}\label{lec:equidistribution}

In this section we introduce the notion of equidistribution in compact groups $G$ and relate it to analytic properties of $L$-functions of representations of $G$.
We then explain (following Tate) why the Sato-Tate conjecture for elliptic curves follows from the holomorphicity and non-vanishing of a certain sequence of $L$-functions that one can associate to an elliptic curve over $\Q$ (or any number field).

\subsection{Equidistribution}\label{sec:measure}
We now formally define the notion of equidistribution, following \cite[\S 1A]{Se68}.
For a compact Hausdorff space $X$, we use $C(X)$ to denote the Banach space of complex-valued continuous functions $f\colon X\to \C$ equipped with the sup-norm $\Vert f\Vert\coloneqq \sup_{x\in X}|f(x)|$.
The space $C(X)$ is closed under pointwise addition and multiplication and contains all constant functions; it is thus a commutative $\C$-algebra with unit $\ifunc_X$ (the function $x\mapsto 1$).\footnote{In fact, it is a commutative $C^*$-algebra with complex conjugation as its involution, but we will not make use of this.}
For any $\C$-valued functions $f$ and $g$ (continuous or not), we write $f\le g$ whenever $f$ and $g$ are both $\R$-valued and $f(x)\le g(x)$ for all $x\in X$; in particular, $f\ge 0$ means $\im(f)\subseteq \R_{\ge 0}$.
The subset of $\R$-valued functions in $C(X)$ form a distributive lattice under this order relation.

\begin{definition}\label{def:measure}
A (positive normalized Radon) \emph{measure} on a compact Hausdorff space $X$ is a continuous $\C$-linear map $\mu\colon C(X)\to \C$ that satisfies $\mu(f)\ge 0$ for all $f\ge 0$ and $\mu(\mathbbm 1_X)=1$.
\end{definition}

\begin{example}
For each point $x\in X$ the map $f\mapsto f(x)$ defines the \emph{Dirac measure} $\delta_x$.
\end{example}

The value of $\mu$ on $f\in C(X)$ is often denoted using integral notation
\[
\int_X\! f\mu \coloneqq \mu(f),
\]
and we shall use the two interchangeably.\footnote{Note that this is a definition; with a measure-theoretic approach one avoids the need to develop an integration theory.}

Having defined the measure $\mu$ as a function on $C(X)$, we would like to use it to assign values to (at least some) subsets of $X$.
It is tempting to define the measure of a set $S\subseteq X$ as the measure of its indicator function $\ifunc_S$, but in general the function $\ifunc_S$ will not lie in $C(X)$; this occurs if and only if $S$ is both open and closed (which we note applies to $S=X$).  Instead, for each open set $S\subseteq X$ we define
\[
\mu(S) = \sup\, \bigl\{\mu(f):0\le f \le \ifunc_S,\ f\in C(X) \bigr\}\in [0,1],
\]
and for each closed set $S\subseteq X$ we define
\[
\mu(S)=1-\mu(X-S) \in [0,1].
\]
If $S\subseteq X$ has the property that for every $\epsilon >0$ there exists an open set $U\supseteq S$ of measure $\mu(U)\le \epsilon$, then we define $\mu(S)=0$ and say that $S$ has \emph{measure zero}.
If the boundary $\partial S\coloneqq \overline S-{S^0}$ of a set $S$ has measure zero, then we necessarily have $\mu(S^0)=\mu(\overline S)$ and define $\mu(S)$ to be this common value; such sets are said to be $\mu$-\emph{quarrable}.

For the purpose of studying equidistribution, we shall restrict our attention to $\mu$-quarrable sets $S$.
This typically does not include all measurable sets in the usual sense, by which we mean elements of the Borel $\sigma$-algebra $\Sigma$ of $X$ generated by the open sets under complements and countable unions and intersections (see Exercise \ref{ex:measure}).
However, if we are given a regular Borel measure $\mu$ on $X$ of total mass $1$, by which we mean a countably additive function $\mu\colon\Sigma\to \R_{\ge 0}$ for which $\mu(S)=\inf\,\{\,\mu(U): S\subseteq U,\ U\text{ open}\}$ and $\mu(X)=1$, it is easy to check that defining $\mu(f)\coloneqq \int_X f\mu$ for each $f\in C(X)$ yields a measure under Definition~\ref{def:measure}; see \cite[\S 1]{He77} for details.
This measure is determined by the values $\mu$ takes on $\mu$-quarrable sets \cite{Wulf61}.
In particular, if $X$ is a compact group then its Haar measure induces a measure on $X$ in the sense of Definition~\ref{def:measure}.

\begin{definition}
A sequence $(x_1,x_2,x_3,\ldots)$ in $X$ is said to be \emph{equidistributed with respect to $\mu$}, or simply \emph{$\mu$-equidistributed}, if for every $f\in C(X)$ we have
\[
\mu(f)=\lim_{n\to\infty}\frac{1}{n}\sum_{i=1}^nf(x_i).
\]
\end{definition}

\begin{remark}\label{rem:order}
When we speak of equidistribution, note that we are talking about a \emph{sequence} $(x_i)$ of elements of $X$ in a particular order; it does not make sense to say that a \emph{set} is equidistributed.
For example, suppose we took the set of odd primes and arranged them in the sequence $(5,13,3,17,29,7,\ldots)$ where we list two primes congruent to 1 modulo 4 followed by one prime congruent to 3 modulo 4.
The sequence obtained by reducing this sequence modulo $4$ is not equidistributed with respect to the uniform measure on $(\Z/4\Z)^\times$, even though the sequence of odd primes in their usual order is (by Dirichlet's theorem on primes in arithmetic progressions).
However, local rearrangements that change the index of an element by no more than a bounded amount do not change its equidistribution properties.
This applies, in particular, to sequences indexed by primes of a number field ordered by norm; the equidistribution properties of such a sequence do not depend on how we order primes of the same norm.
\end{remark}

If $(x_i)$ is a sequence in $X$, for each real-valued function $f\in C(X)$ we define the $k$th-\emph{moment} of the sequence $(f(x_i))$ by
\[
\Mom_k[(f(x_i)]\coloneqq  \lim_{n\to\infty}\frac{1}{n} \sum_{i=1}^n f(x_i)^k.
\]
If these limits exist for all $k\ge 0$, we then define the \emph{moment sequence}
\[
\Mom[f(x_i)] \coloneqq (\Mom_0[(f(x_i)], \Mom_1[(f(x_i)], \Mom_2[(f(x_i)],\ldots).
\]
If $(x_i)$ is $\mu$-equidistributed, then $\Mom_k[f(x_i)]=\mu(f^k)$ and the moment sequence
\begin{equation}\label{eq:mumom}
\Mom[f(x_i)] = (\mu(f^0),\mu(f^1),\mu(f^2),\ldots)
\end{equation}
is independent of the sequence $(x_i)$; it depends only on the function $f$ and the measure $\mu$.

\begin{remark}
There is a partial converse that is relevant to some of our applications.
To simplify matters, let us momentarily restrict our attention to real-valued functions; for the purposes of this remark, let $C(X)$ denote the Banach algebra of real-valued functions on $X$ and replace $\C$ with $\R$ in Definition~\ref{def:measure}.
Let $(x_i)$ be a sequence in $X$ and let $f\in C(X)$.
Then $f(X)$ is a compact subset of $\R$, and we may view $(f(x_i))$ as a sequence in $f(X)$.
If the moments $\Mom_k[f(x_i)]$ exist for all $k\ge 0$, then there is a \emph{unique} measure on $f(X)$ with respect to which the sequence $(f(x_i))$ is equidistributed; this follows from the Stone-Weierstrass theorem.
If $\mu$ is a measure on $C(X)$, we define the pushforward measure $\mu_f(g)\coloneqq \mu(g\circ f)$ on $C(f(X)$ and see that the sequence $(f(x_i))$ is $\mu_f$-equidistributed if and only if \eqref{eq:mumom} holds.
This gives a necessary (but in general not sufficient) condition for $(x_i)$ to be $\mu$-equidistributed that can be checked by comparing moment sequences.
If we have a collection of functions $f_j\in C(X)$ such that the pushforward measures $\mu_{f_j}$ uniquely determine $\mu$, we obtain a necessary and sufficient condition involving the moment sequences of the $f_j$ with respect to $\mu$.
One can generalize this remark to complex-valued functions using the theory of $C^*$-algebras.
\end{remark}

More generally, we have the following lemma.

\begin{lemma}\label{lem:se1}
Let $(f_j)$ be a family of functions whose linear combinations are dense in $C(X)$.
If $(x_i)$ is a sequence in $X$ for which the limit $\lim_{n\to\infty}\frac{1}{n}\sum_{i=1}^n f_j(x_i)$ converges for every $f_j$, then there is a unique measure $\mu$ on $X$ for which $(x_i)$ is $\mu$-equidistributed.
\end{lemma}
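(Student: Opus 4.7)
The plan is to define $\mu$ as a continuous linear functional on $C(X)$ built from the convergent averages on the dense subspace, and then verify the axioms of Definition~\ref{def:measure} together with the equidistribution property. Let $V\subseteq C(X)$ denote the linear span of the $f_j$. On $V$ define $L(f)\coloneqq \lim_{n\to\infty}\frac{1}{n}\sum_{i=1}^n f(x_i)$; by hypothesis this exists on each $f_j$, and linearity of the averaging operator (together with linearity of limits) shows the limit exists for every finite linear combination and that $L$ is $\C$-linear on $V$.

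The first real step is to bound $L$ uniformly. For any $f\in V$ and any $n$, one has $\bigl|\tfrac{1}{n}\sum_{i=1}^n f(x_i)\bigr|\le \Vert f\Vert$, so $|L(f)|\le \Vert f\Vert$. Thus $L$ is continuous on $V$, and since $V$ is dense in the Banach space $C(X)$, $L$ extends uniquely to a continuous $\C$-linear functional $\mu\colon C(X)\to \C$ with $\Vert \mu\Vert\le 1$. The key observation is that the extended $\mu$ is still realized as a limit of averages on all of $C(X)$: given $f\in C(X)$ and $\epsilon>0$, pick $g\in V$ with $\Vert f-g\Vert<\epsilon$; then for every $n$,
\[
\left|\tfrac{1}{n}\sum_{i=1}^n f(x_i)-\tfrac{1}{n}\sum_{i=1}^n g(x_i)\right|\le \Vert f-g\Vert<\epsilon,
\]
and since the $g$-averages converge to $L(g)=\mu(g)$, a standard $3\epsilon$ argument shows the $f$-averages converge to $\mu(f)$.

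From this realization the remaining axioms are immediate. If $f\ge 0$, then each partial average $\tfrac{1}{n}\sum f(x_i)$ is a nonnegative real, so the limit $\mu(f)$ is $\ge 0$; and since $\tfrac{1}{n}\sum \ifunc_X(x_i)=1$ for every $n$, we get $\mu(\ifunc_X)=1$. Hence $\mu$ is a measure in the sense of Definition~\ref{def:measure}, and the same limit identity $\mu(f)=\lim_n \tfrac{1}{n}\sum f(x_i)$ holding for all $f\in C(X)$ is precisely the statement that $(x_i)$ is $\mu$-equidistributed.

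For uniqueness, suppose $\mu'$ is any measure for which $(x_i)$ is equidistributed. Then $\mu'(f_j)=\lim_n \tfrac{1}{n}\sum f_j(x_i)=\mu(f_j)$ for every $j$, hence $\mu$ and $\mu'$ agree on $V$, and by continuity of measures (which are elements of the dual $C(X)^*$) together with the density of $V$ in $C(X)$, they agree on all of $C(X)$. The main obstacle is really just the extension step: one has to be careful that $V$ need not contain nonnegative approximants to a nonnegative $f$, so positivity of $\mu$ is established from positivity of the averages rather than from positivity on $V$; but once the averaging formula is extended to all of $C(X)$, this is automatic.
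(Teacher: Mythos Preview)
Your proof is correct. The paper itself gives no argument and simply cites \cite[Lemma~A.1]{Se68}; your density-plus-$3\epsilon$ extension of the averaging functional, followed by reading off positivity and normalization from the extended averaging formula, is precisely the standard proof (and essentially what Serre does there).
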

\begin{proof}
See \cite[Lemma A.1, p. I-19]{Se68}.
\end{proof}

\begin{proposition}\label{prop:mueq}
If $(x_i)$ is a  $\mu$-equidistributed sequence in $X$ and $S$ is a $\mu$-quarrable set in $X$ then
\[
\mu(S) = \lim_{n\to\infty}\frac{\#\{x_i\in S:i \le n\}}{n}.
\]
\end{proposition}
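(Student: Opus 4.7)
The plan is to sandwich the (generally discontinuous) indicator function $\ifunc_S$ between continuous functions from below and above whose $\mu$-values differ by an arbitrarily small amount, and then apply the defining equidistribution limit to those continuous approximants.

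More precisely, I would first unpack what $\mu$-quarrability gives us. By definition, $\mu(\partial S) = 0$, so $\mu(S^0) = \mu(\overline S) = \mu(S)$. Using the definition of $\mu$ on open and closed sets from the text, I would fix $\epsilon > 0$ and produce continuous functions $f, g \in C(X)$ with $0 \le f \le \ifunc_{S^0} \le \ifunc_S \le \ifunc_{\overline S} \le g \le 1$ such that $\mu(f) \ge \mu(S^0) - \epsilon = \mu(S) - \epsilon$ and $\mu(g) \le \mu(\overline S) + \epsilon = \mu(S) + \epsilon$. The lower bound $f$ exists immediately from the supremum in the definition of $\mu(S^0)$; the upper bound $g$ is constructed by applying the same supremum definition to the open complement $X - \overline S$ to find a continuous $h$ with $0 \le h \le \ifunc_{X - \overline S}$ and $\mu(h) \ge 1 - \mu(\overline S) - \epsilon$, and then setting $g \coloneqq \ifunc_X - h$.

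With the sandwich in hand, the rest is purely formal. For every $n$ we have the pointwise inequality
\[
\sum_{i=1}^n f(x_i) \;\le\; \#\{i \le n : x_i \in S\} \;\le\; \sum_{i=1}^n g(x_i),
\]
so dividing by $n$ and using the $\mu$-equidistribution of $(x_i)$ applied to the continuous functions $f$ and $g$ yields
\[
\mu(S) - \epsilon \;\le\; \mu(f) \;\le\; \liminf_{n\to\infty}\frac{\#\{i \le n : x_i \in S\}}{n} \;\le\; \limsup_{n\to\infty}\frac{\#\{i \le n : x_i \in S\}}{n} \;\le\; \mu(g) \;\le\; \mu(S) + \epsilon.
\]
Letting $\epsilon \to 0$ forces the liminf and limsup to agree with $\mu(S)$, which is exactly the desired limit.

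The only mildly subtle step is verifying that the sup-definition of $\mu$ on an open set $U$ (as a supremum over continuous $0 \le f \le \ifunc_U$) actually attains values arbitrarily close to $\mu(U)$ by continuous functions; this is built into the definition given in the text, so no Urysohn-type construction is needed here. The proposition should then follow cleanly, with the sandwich construction being the only place where $\mu$-quarrability (as opposed to mere measurability) is essential.
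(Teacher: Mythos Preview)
Your argument is correct and is precisely the standard sandwich argument one would expect here; the paper itself does not supply a proof but simply defers the proposition to Exercise~\ref{ex:mueq}. There is therefore nothing substantive to compare, and your use of the sup-definition of $\mu$ on open sets to manufacture the continuous approximants $f$ and $g$ is exactly the intended route.
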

\begin{proof}
See Exercise \ref{ex:mueq}.
\end{proof}

\begin{example}
If $X=[0,1]$ and $\mu$ is the Lebesgue measure then a sequence $(x_i)$ is $\mu$-equidistributed if and only if for every $0\le a < b\le 1$ we have
\[
\lim_{n\to\infty}\frac{\#\{x_i\in [a,b]: i\le n\}}{n} = b-a.
\]
More generally, if $X$ is a compact subset of $\R^n$ and $\mu$ is the normalized Lebesgue measure, then $(x_i)$ is $\mu$-equidistributed if and only if for every $\mu$-quarrable $S\subseteq X$ we have $\lim_{n\to\infty}\frac{1}{n}\#\{x_i\in S:i\le n\}=\mu(S)$.
\end{example}

\subsection{Equidistribution in compact groups}

We now specialize to the case where $X\coloneqq \conj(G)$ is the space of conjugacy classes of a compact group $G$, obtained by taking the quotient of~$G$ as a topological space under the equivalence relation defined by conjugacy; let $\pi\colon G\to X$ denote the quotient map.
We then equip $X$ with the pushforward of the Haar measure $\mu$ on $G$ (normalized so that $\mu(G)=1$), which we also denote~$\mu$.
Explicitly, $\pi$ induces a map of Banach spaces
\begin{align*}
C(X)&\to C(G)\\
f&\mapsto f\circ \pi,
\end{align*}
and the value of $\mu$ on $C(X)$ is defined by
\[
\mu(f) \coloneqq \mu(f\circ\pi).
\]
We say that a sequence $(x_i)$ in $X$ or a sequence $(g_i)$ in $G$ is \emph{equidistributed} if it is $\mu$-equidistributed (when we speak of equidistribution in a compact group without explicitly mentioning a measure, we always mean the Haar measure).

\begin{proposition}
Let $G$ be a compact group with Haar measure $\mu$, and let $X\coloneqq\conj(G)$.
A sequence $(x_i)$ in $X$ is $\mu$-equidistributed if and only if for every irreducible character~$\chi$ of $G$ we have
\[
\lim_{n\to\infty}\frac{1}{n}\sum_{i=1}^n\chi(x_i)=\mu(\chi).
\]
\end{proposition}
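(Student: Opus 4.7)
The plan is to invoke Lemma~\ref{lem:se1} with the family of irreducible characters of $G$, regarded as continuous functions on $X = \conj(G)$. The forward direction is essentially a tautology: each irreducible character $\chi$ of $G$ is a class function, so descends to an element of $C(X)$, and the definition of $\mu$-equidistribution of $(x_i)$ applied to $f = \chi$ gives exactly the stated limit.

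For the reverse direction, the key input is that the $\C$-linear span of the irreducible characters is dense in $C(X)$ for the sup-norm. I would deduce this from the Peter--Weyl theorem: matrix coefficients of irreducible unitary representations of $G$ are uniformly dense in $C(G)$, and the orthogonal projection $C(G) \to C(X)$ given by averaging over conjugacy classes, $f \mapsto (g \mapsto \int_G f(hgh^{-1})\,\mu(dh))$, is a norm-decreasing surjection that sends the matrix coefficients of an irreducible representation $\pi$ of dimension $d$ onto scalar multiples of $\chi_\pi/d$ (by Schur's lemma applied to the intertwiner $\int_G \pi(h)A\pi(h)^{-1}\mu(dh)$ for each matrix unit $A$). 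Hence the characters span a dense subspace of $C(X)$.

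Given density, Lemma~\ref{lem:se1} applies to the family $(\chi)$ of irreducible characters: the hypothesis that $\tfrac{1}{n}\sum_{i=1}^n \chi(x_i)$ converges for every such $\chi$ guarantees the existence of a unique measure $\nu$ on $X$ for which $(x_i)$ is $\nu$-equidistributed. Since by hypothesis this limit equals $\mu(\chi)$ for every irreducible $\chi$, we have $\nu(\chi) = \mu(\chi)$ on a dense subset of $C(X)$; because measures are continuous $\C$-linear functionals on $C(X)$, this forces $\nu = \mu$, and hence $(x_i)$ is $\mu$-equidistributed.

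The main obstacle is the density statement, which rests on Peter--Weyl and thus is a genuinely nontrivial ingredient, though standard in the representation theory of compact groups. Everything else is formal manipulation of Lemma~\ref{lem:se1} and the definition of equidistribution.
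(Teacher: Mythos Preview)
Your proposal is correct and follows essentially the same approach as the paper: invoke Lemma~\ref{lem:se1} together with the Peter--Weyl theorem to conclude that the irreducible characters span a dense subspace of $C(X)$. Your averaging-projection argument is a nice explicit justification of the density step, which the paper leaves as a citation to Serre.
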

\begin{proof}
As explained in \cite[Prop.\ A.2]{Se68}, this follows from Lemma~\ref{lem:se1} and the Peter-Weyl theorem, since the irreducible characters~$\chi$ of $G$ generate a dense subset of $C(X)$.
\end{proof}

\begin{corollary}\label{cor:char}
Let $G$ be a compact group with Haar measure $\mu$, and let $X\coloneqq\conj(G)$.
A sequence $(x_i)$ in $X$ is $\mu$-equidistributed if and only if for every nontrivial irreducible character~$\chi$ of $G$ we have
\[
\lim_{n\to\infty}\frac{1}{n}\sum_{i=1}^n\chi(x_i)=0.
\]
\end{corollary}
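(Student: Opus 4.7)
The plan is to deduce Corollary \ref{cor:char} directly from the preceding Proposition, which already gives the equivalence with the condition $\lim_{n\to\infty}\frac{1}{n}\sum_{i=1}^n\chi(x_i)=\mu(\chi)$ for \emph{every} irreducible character $\chi$. Thus only two small facts need to be verified: first, that the condition attached to the trivial character is automatic, so it may be dropped; and second, that for every \emph{nontrivial} irreducible character $\chi$ the value $\mu(\chi)$ on the right-hand side equals $0$, so the two formulations agree.

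For the first point, the trivial character $\chi_0$ is constantly $1$ on $X$, so $\frac{1}{n}\sum_{i=1}^n\chi_0(x_i)=1$ for every $n$, and $\mu(\chi_0)=\mu(\ifunc_X)=1$ by our normalization of the Haar measure; the equality in the Proposition thus holds for $\chi_0$ unconditionally and contributes nothing.

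For the second point, I would invoke the orthogonality relations of characters for compact groups (a direct consequence of the Peter-Weyl theorem, already cited in the proof of the Proposition): for any two distinct irreducible characters $\chi$, $\chi'$ of $G$ one has $\int_G \chi\,\overline{\chi'}\,\mu = 0$. Applying this with $\chi'=\chi_0$ the trivial character, and noting $\overline{\chi_0}=\chi_0=1$, yields $\mu(\chi)=\int_G\chi\,\mu=0$ for every nontrivial irreducible $\chi$. Since $\mu$ on $X=\conj(G)$ is defined as the pushforward $\mu(f)\coloneqq\mu(f\circ\pi)$, and characters are class functions, this computation on $G$ transfers without change to $X$.

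Combining the two observations, the condition in the Proposition reduces to the condition in the Corollary, and the equivalence follows. There is no genuine obstacle here; the only thing one needs to be a little careful about is to quote the orthogonality relations in the correct (compact-group) setting, and to remember that $\mu$ on $X$ is defined by pullback of class functions so that $\mu(\chi)$ is unambiguous.
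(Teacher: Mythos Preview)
Your proof is correct and follows essentially the same approach as the paper: both reduce to the preceding Proposition by noting that the trivial-character condition is automatic (since $\mu(\chi_0)=1$ equals the constant average) and that $\mu(\chi)=0$ for nontrivial irreducible $\chi$ by Schur orthogonality. The paper's argument is terser but identical in substance.
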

\begin{proof}
For the trivial character we have $\mu(1)=\mu(G)=1$, and for any nontrivial irreducible character~$\chi$ we must have $\mu(\chi)=\int_G\chi\mu = \int_G1\cdot\chi\mu = 0$, by Schur orthogonality; the corollary follows.
\end{proof}

To illustrate these results, we now use Corollary~\ref{cor:char} to prove an equidistribution result for elliptic curves over finite fields that will be useful later.
We first recall some basic facts.  Let $E$ be an elliptic curve over a finite field $\Fq$; without loss of generality, assume $E/\Fq$ is given by a projective plane model.
The \emph{Frobenius endomorphism} $\pi_E:E\to E$ is defined by the rational map
\[
(x:y:z)\mapsto (x^q:y^q:z^q).
\]
Like all endomorphisms of elliptic curves, $\pi_E$ has a characteristic polynomial of the form
\[
T^2-(\tr\pi_E)T+\deg \pi_E
\]
that is satisfied by both $\pi_E$ and its dual $\hat\pi_E$, where $\tr\pi_E=\pi_E+\hat\pi_E$ and $q=\deg\pi_E=\pi_E\hat \pi_E$ are both integers.\footnote{By the \emph{dual} of an endomorphism of a polarized abelian variety we mean the Rosati dual (see \cite[\S 13]{Milne:AV}), which for elliptic curves we may identify with the dual isogeny.}
The set $E(\Fq)$ is, by definition, the subset of $E(\Fqbar)$ fixed by $\pi_E$, equivalently, the kernel of the endomorphism $\pi_E-1$.  One can show that $\pi_E-1$ is a separable, and therefore
\[
\#E(\Fq) = \#\ker(\pi_E-1)= \deg(\pi_E-1) = (\pi_E-1)(\hat \pi_E-1)=\hat\pi_E\pi_E+1-(\hat\pi_E+\pi_E) =q+1-\tr\pi_E.
\]
It follows that $t_q\coloneqq q+1- \#E(\Fq)$ is the \emph{trace of Frobenius} $\tr\pi_E$.
As we showed in Section \ref{sec:secondexample} for the case $q=p$, the zeta function of $E$ can be written as
\[
Z_E(T)=\frac{qT^2-t_qT+1}{(1-T)(1-qT)},
\]
where the complex roots of $qT^2-t_qT+1$ have absolute value $q^{-1/2}$.
This implies that we can write $t_q=\alpha+\bar\alpha$ for some $\alpha\in \C$ with $|\alpha|=q^{1/2}$, and we have
$\#E(\Fq)=q+1-(\alpha+\bar\alpha)$.

We now observe that for any integer $r\ge 1$, the set $E(\F_{q^r})$ is the subset of $E(\Fqbar)$ fixed by $\pi_E^r$, which corresponds to the $q^r$-power Frobenius automorphism; it follows that
\[
\#E(\F_{q^r}) = q^r+1-(\alpha^r+\bar\alpha^r),
\]
and therefore $\alpha^r+\bar\alpha^r$ is the trace $t_{q^r}$ of the Frobenius endomorphism of the base change of $E$ to $\F_{q^r}$.

As an application of Corollary~\ref{cor:char}, we now prove the following result, taken from \cite[Prop 2.2]{Fite15}.
Recall that $E/\Fq$ is said to be \emph{ordinary} if $t_q$ is not zero modulo the characteristic of $\Fq$.

\begin{proposition}\label{prop:efqdist}
Let $E/\Fq$ be an ordinary elliptic curve and for integers $r\ge 1$, let $t_{q^r}\coloneqq q^r+1-\#E(\F_{q^r})$ and define
\[
x_r \coloneqq t_{q^r}/q^{r/2}.
\]
The sequence $(x_r)$ is equidistributed in $[-2,2]$ with respect to the measure
\[
\mu \coloneqq \frac{1}{\pi}\frac{dz}{\sqrt{4-z^2}},
\]
where $dz$ is the Lebesgue measure on $[-2,2]$.
\end{proposition}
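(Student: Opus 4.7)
The plan is to translate the problem into equidistribution on the circle and then apply Weyl's criterion. From the discussion preceding the proposition, we already know $t_{q^r} = \alpha^r + \bar\alpha^r$ with $|\alpha| = q^{1/2}$, so writing $\alpha = q^{1/2}e^{i\theta}$ with $\theta \in [0,\pi]$ we get
\[
x_r = 2\cos(r\theta).
\]
Moreover, the target measure $\mu$ is precisely the pushforward of the uniform probability measure $d\psi/\pi$ on $[0,\pi]$ under $\psi\mapsto 2\cos\psi$, since the substitution $z=2\cos\psi$ gives $dz = -\sqrt{4-z^2}\,d\psi$ and therefore $d\psi/\pi = dz/(\pi\sqrt{4-z^2})$. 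Hence the proposition reduces to the claim that the sequence $(r\theta \bmod 2\pi)_{r\ge 1}$ is equidistributed on $\R/2\pi\Z$ with respect to $d\psi/(2\pi)$.

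By Weyl's equidistribution criterion (equivalently, Corollary~\ref{cor:char} applied to $U(1)$, whose nontrivial irreducible characters are $z\mapsto z^n$ for $n\neq 0$), this equidistribution holds if and only if $\theta/\pi \notin \Q$. Indeed, for each $n\neq 0$ the averages $R^{-1}\sum_{r=1}^{R} e^{inr\theta}$ form a bounded geometric series that tends to $0$ precisely when $n\theta \notin 2\pi\Z$.

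The crux, and the only nontrivial step, is to show that $\theta/\pi$ is irrational when $E$ is ordinary. Suppose to the contrary that $\theta = \pi m/n$ with $m,n\in \Z_{>0}$. Then $\alpha^{2n} = q^n e^{2\pi i m} = q^n$, so under the embedding $\End(E)\otimes\Q\hookrightarrow \C$ sending $\pi_E$ to $\alpha$, the Frobenius endomorphism of the base change $E_{\F_{q^{2n}}}$ equals the rational integer $q^n$. Consequently $t_{q^{2n}} = 2q^n$ is divisible by $p=\mathrm{char}\,\F_q$, so $E_{\F_{q^{2n}}}$ is supersingular. But ordinariness is a geometric invariant (characterized by $E[p](\Fqbar) \cong \Z/p\Z$) and therefore preserved under base change, contradicting the hypothesis. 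With $\theta/\pi \notin \Q$, the Weyl argument above concludes the proof.
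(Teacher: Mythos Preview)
Your proof is correct and follows essentially the same route as the paper's: both reduce to equidistribution of $(\alpha/q^{1/2})^r$ in $\U(1)$ via its characters (Weyl's criterion is exactly Corollary~\ref{cor:char} for $\U(1)$), use the geometric-series bound, and invoke ordinariness to ensure $\alpha/q^{1/2}$ is not a root of unity. The only difference is cosmetic---you parametrize by the angle $\theta$ rather than the point $u=\alpha/q^{1/2}$---and you supply a direct argument for the ``not a root of unity'' step that the paper defers to Exercise~\ref{ex:ord}.
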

\begin{proof}
Let $\alpha$ be as above, with $|\alpha|=q^{1/2}$ and $\tr \pi_E=\alpha+\bar\alpha$.
Then $x_r = (\alpha^r+\bar\alpha^r)/q^{r/2}$ for all $r\ge 1$.
Let $\U(1)\coloneqq\{u\in \C^\times:u\bar u=1\}$ be the unitary group.
For $u=e^{i\theta}$, the Haar measure on $\U(1)$ corresponds to the uniform measure on $\theta\in [-\pi,\pi]$, this follows immediately from the translation invariance of the Haar measure.
Let us compute the pushforward of the Haar measure of $\U(1)$ to $[-2,2]$ via the map $u\mapsto z\coloneqq u+\bar u=2\cos\theta$.  We have $dz=2\sin\theta d\theta $, and see that the pushforward is precisely $\mu$.

The nontrivial irreducible characters $\U(1)\to \C^\times$ all have the form $\phi_a(u)=u^a$ for some nonzero $a\in \Z$.
For each such $\phi_a$ we have
\[
\lim_{n\to\infty}\frac{1}{n}\sum_{r=1}^n\phi_a(\alpha^r/q^{r/2}) = \lim_{n\to\infty}\frac{1}{n}\sum_{r=1}^n (\alpha/q^{1/2})^{ra} = \lim_{n\to\infty}\frac{1}{n}\frac{(\alpha/q^{1/2})^{a(n+1)} - (\alpha/q^{1/2})^a}{(\alpha/q^{1/2})^a-1} = 0.
\]
The hypothesis that $E$ is ordinary guarantees that $\alpha/q^{1/2}$ is not a root of unity (see Exercise~\ref{ex:ord}), thus $(\alpha/q^{1/2})^a-1$ is nonzero for all nonzero $a\in \Z$.
Corollary~\ref{cor:char} implies that $(\alpha^r/q^{r/2})$ is equidistributed in $\U(1)$, and therefore $(x_r)$ is $\mu$-equidistributed.
\end{proof}

See \cite{AS12} for a generalization to smooth projective curves $C/\Fq$ of arbitrary genus $g\ge 1$.

\subsection{Equidistribution for \texorpdfstring{$L$}{{\it L}}-functions}\label{sec:eqLfunc}

As above, let $G$ be a compact group and let $X\coloneqq \conj(G)$.
Let~$K$ be a number field, and let $P\coloneqq (\p_1,\p_2,\p_3,\ldots)$ be a sequence consisting of all but finitely many primes~$\p$ of~$K$ ordered by norm; this means that $N(\p_i)\le N(\p_j)$ for all $i\le j$.
Let $(x_\p)$ be a sequence in~$X$ indexed by $P$, and for each irreducible representation $\rho\colon G\to \GL_d(\C)$, define the $L$-function
\[
L(\rho,s)\coloneqq \prod_{\p \in P} \det(1-\rho(x_\p)N(\p)^{-s})^{-1},
\]
for $s\in \C$ with $\Re(s) > 1$.

\begin{theorem}\label{thm:Lfunc}
Let $G$ and $(x_\p)$ be as above, and suppose $L(\rho,s)$ is meromorphic on $\Re(s)\ge 1$ with no zeros or poles except possibly at $s=1$, for every irreducible representation $\rho$ of $G$.
The sequence $(x_\p)$ is equidistributed if and only if for each $\rho\ne 1$, the $L$-function $L(\rho,s)$ extends analytically to a function that is holomorphic and nonvanishing on $\Re(s)\ge 1$.
\end{theorem}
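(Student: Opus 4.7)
The plan is to reduce the equidistribution statement, via Corollary~\ref{cor:char} and the prime ideal theorem, to an asymptotic estimate on partial sums of characters, and then to translate that estimate into (and out of) the analytic behavior of $L(\rho,s)$ near the line $\Re(s)=1$ using a Tauberian theorem. By Corollary~\ref{cor:char}, the equidistribution of $(x_\p)$ is equivalent to
\[
\lim_{n\to\infty}\frac{1}{n}\sum_{i=1}^n \chi(x_{\p_i}) = 0
\]
for every nontrivial irreducible character $\chi$ of $G$. The prime ideal theorem gives $\pi_K(B):=\#\{\p:N(\p)\le B\}\sim B/\log B$, and because ordering primes by norm is only ambiguous up to bounded rearrangements (Remark~\ref{rem:order}), we may recast the condition as $\sum_{N(\p)\le B}\chi(x_\p)=o(\pi_K(B))$ for every nontrivial irreducible $\chi$.

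Next I would expand the Euler product: for an irreducible representation $\rho$ with character $\chi$, taking a logarithm term by term gives
\[
\log L(\rho,s) = \sum_{\p}\sum_{k\ge 1}\frac{\chi(x_\p^k)}{k\,N(\p)^{ks}} = \sum_{\p}\frac{\chi(x_\p)}{N(\p)^s} + h(s),
\]
where $h(s)$ collects the terms with $k\ge 2$. Since $|\chi(g)|\le\dim\rho$ for all $g\in G$, the defining series for $h(s)$ converges absolutely on $\Re(s)>1/2$, so $h(s)$ is holomorphic there. Consequently, the analytic behavior of $\log L(\rho,s)$ on $\Re(s)\ge 1$ agrees with that of the Dirichlet series $f_\chi(s):=\sum_\p \chi(x_\p)N(\p)^{-s}$.

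For the ``if'' direction, the holomorphy and non-vanishing of $L(\rho,s)$ on $\Re(s)\ge 1$ for every nontrivial $\rho$ permit a holomorphic choice of logarithm, so $f_\chi$ extends holomorphically across $\Re(s)=1$; applying a Tauberian theorem then yields $\sum_{N(\p)\le B}\chi(x_\p)=o(\pi_K(B))$, which by the reduction above gives equidistribution. Conversely, equidistribution combined with Abel summation and the prime ideal theorem forces $f_\chi$ to have a bounded (in fact continuous) extension to $\Re(s)\ge 1$; together with the hypothesized meromorphy on $\Re(s)\ge 1$ this rules out both poles and zeros of $L(\rho,s)$ along that line.

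The principal obstacle is the Tauberian step. The classical Wiener--Ikehara theorem takes as input real nonnegative Dirichlet coefficients and a simple pole with positive residue at $s=1$, whereas here the coefficients $\chi(x_\p)$ are complex and $f_\chi(s)$ is supposed to be holomorphic at $s=1$ rather than having a pole there. The standard remedy, which I would follow (see \cite[Ch.\ I, App.]{Se68}), is to split $\chi$ into its real and imaginary parts and add a suitable multiple of the trivial character, so that the resulting Dirichlet series acquire nonnegative coefficients and a controlled simple pole inherited from $\zeta_K(s)$; applying Wiener--Ikehara to each auxiliary series and subtracting off the known contribution from $\zeta_K$ then yields the needed estimate on the partial sums of $\chi(x_\p)$, completing the argument in both directions.
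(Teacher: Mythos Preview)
The paper does not prove this theorem; it simply refers to the corollary of \cite[Thm.~A.2]{Se68} and to \cite[Thm.~2.3]{Fite15}. Your sketch is essentially the argument carried out in those references: the reduction to character sums via Corollary~\ref{cor:char}, the decomposition $\log L(\rho,s)=f_\chi(s)+h(s)$ with $h$ holomorphic on $\Re(s)>1/2$, and the Abel-summation argument for the converse direction are all correct.

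There is one point in the forward direction that needs adjustment. Applying Wiener--Ikehara to $f_\chi(s)=\sum_\p\chi(x_\p)N(\p)^{-s}$ (after shifting to nonnegative coefficients by adding a multiple of the trivial character) does not give the estimate you claim: the comparison series $\sum_\p N(\p)^{-s}=\log\zeta_K(s)+(\text{holomorphic on }\Re(s)>\tfrac12)$ has only a \emph{logarithmic} singularity $\log\tfrac{1}{s-1}$ at $s=1$, not a simple pole, so the hypothesis of Wiener--Ikehara is not met and you cannot extract $o(B/\log B)$ this way. The standard remedy, and what Serre actually does in the appendix you cite, is to work with $-L'(\rho,s)/L(\rho,s)$ instead of $\log L(\rho,s)$. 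Its Dirichlet coefficients carry a von Mangoldt weight $\log N(\p)$, and the comparison series $-\zeta_K'(s)/\zeta_K(s)$ now has a genuine simple pole at $s=1$. After the nonnegativity trick, Wiener--Ikehara yields $\sum_{N(\p)\le B}\chi(x_\p)\log N(\p)=o(B)$ (the $k\ge 2$ terms being $O(\sqrt{B}\log B)$), and a final partial summation gives $\sum_{N(\p)\le B}\chi(x_\p)=o(B/\log B)=o(\pi_K(B))$, which is what you need.
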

\begin{proof}
See the corollary to \cite[Thm.\ A.2]{Se68}, or see \cite[Thm.\ 2.3]{Fite15}.
\end{proof}

A notable case in which the hypothesis of Theorem~\ref{thm:Lfunc} is known to hold is when $L(\rho,s)$ corresponds to an Artin $L$-function. As in Section \ref{sec:wtzero}, to each prime $\p$ in $K$ we associate an absolute Frobenius element $\Frob_\p\in\Gal(\Kbar/K)$, and for each finite Galois extension $L/K$ we use $\conj_L(\Frob_\p)$ to denote the conjugacy class in $\Gal(L/K)$ of the restriction of $\Frob_\p$ to $L$.

\begin{corollary}\label{cor:artinL}
Let $L/K$ be a finite Galois extension with $G\coloneqq\Gal(L/K)$ and let $P$ be the sequence of unramified primes of~$K$ ordered by norm (break ties arbitrarily).
The sequence $(\conj_L(\Frob_\p))_{\p\in P}$ is equidistributed in $\conj(G)$; in particular, the Chebotarev density theorem (Theorem~\ref{thm:chebotarev}) holds.
\end{corollary}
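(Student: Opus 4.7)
The plan is to apply Theorem~\ref{thm:Lfunc} to the finite (hence compact) group $G=\Gal(L/K)$ equipped with the sequence $(x_\p)=(\conj_L(\Frob_\p))_{\p\in P}$, and then deduce the Chebotarev statement from equidistribution via Proposition~\ref{prop:mueq}.

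First I would identify the $L$-functions appearing in Theorem~\ref{thm:Lfunc} with classical Artin $L$-functions. Each irreducible representation $\rho\colon G\to \GL_d(\C)$ inflates along the quotient $\Gal(\Kbar/K)\twoheadrightarrow G$ to an Artin representation $\tilde\rho$, and for $\p\in P$ unramified in $L$ we have $\rho(x_\p)=\tilde\rho(\Frob_\p)$. Hence
\[
L(\rho,s)=\prod_{\p\in P}\det\bigl(1-\tilde\rho(\Frob_\p)N(\p)^{-s}\bigr)^{-1}
\]
agrees with the Artin $L$-function $L(\tilde\rho,s)$ up to the finitely many Euler factors at ramified primes, which are holomorphic and nonvanishing on $\Re(s)>0$ and so cannot alter zero/pole behavior on $\Re(s)\ge 1$.

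Next I would invoke the analytic theory of Artin $L$-functions. By Brauer's induction theorem, $\tilde\rho$ is a $\Z$-linear combination of representations induced from one-dimensional characters of subgroups $\Gal(\Kbar/F)$ corresponding to intermediate fields $F$; the associated induced $L$-functions are Hecke $L$-functions of $F$, which Hecke showed extend meromorphically to $\C$, are holomorphic and nonvanishing on $\Re(s)\ge 1$ for nontrivial characters, and possess only a simple pole at $s=1$ in the trivial case. This gives meromorphic continuation of $L(\tilde\rho,s)$ to $\Re(s)\ge 1$ with no zeros or poles off $s=1$, verifying the hypothesis of Theorem~\ref{thm:Lfunc}. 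For nontrivial irreducible $\rho$, the factorization $\zeta_L(s)=\prod_\rho L(\tilde\rho,s)^{\dim\rho}$ together with the classical nonvanishing of $\zeta_L(s)$ on $\Re(s)=1$ and its simple pole at $s=1$ (accounted for by the trivial factor $\zeta_K(s)$) forces each $L(\tilde\rho,s)$ with $\rho\ne 1$ to be holomorphic and nonvanishing on $\Re(s)\ge 1$.

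Theorem~\ref{thm:Lfunc} then delivers equidistribution of $(\conj_L(\Frob_\p))_{\p\in P}$ in $\conj(G)$. To recover the Chebotarev statement, observe that $\conj(G)$ is finite and discrete, so every subset is $\mu$-quarrable; a conjugation-stable $C\subseteq G$ descends to $\bar C\subseteq\conj(G)$ of pushforward Haar measure $\#C/\#G$, and Proposition~\ref{prop:mueq} yields the claimed density. The finitely many ramified primes excluded from $P$ contribute nothing asymptotically, as noted in Remark~\ref{rem:chebotarev}. The hard part is the analytic input in the second step: holomorphy and nonvanishing of $L(\tilde\rho,s)$ on $\Re(s)\ge 1$ for nontrivial irreducible $\rho$. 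This rests on Brauer induction combined with Hecke's analytic theory for characters, and it is worth emphasizing that the much stronger Artin conjecture (entireness of $L(\tilde\rho,s)$ for $\rho\ne 1$) is not needed here.
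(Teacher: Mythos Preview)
Your proof is correct and follows essentially the same approach as the paper: identify the $L(\rho,s)$ with Artin $L$-functions (up to finitely many harmless factors), invoke their holomorphy and nonvanishing on $\Re(s)\ge 1$ for $\rho\ne 1$, and apply Theorem~\ref{thm:Lfunc}. The paper simply cites this analytic input as a result of Artin (with a reference to Cassels--Fr\"ohlich), whereas you go further and sketch a justification via Brauer induction and the factorization of $\zeta_L(s)$; you also make explicit the passage from equidistribution to the Chebotarev statement via Proposition~\ref{prop:mueq}, which the paper leaves implicit.
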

\begin{proof}
For the trivial representation, the $L$-function $L(1,s)$ agrees with the Dedekind zeta function $\zeta_K(s)$ up to a finite number of holomorphic nonvanishing factors, and, as originally proved by Hecke, $\zeta_K(s)$ is holomorphic and nonvanishing on $\Re(s)\ge 1$ except for a simple pole at $s=1$; see \cite[Cor.\ VII.5.11]{Ne99}, for example.
For every nontrivial irreducible representation $\rho\colon G\to \GL_d(\C)$, the $L$-function $L(\rho, s)$ agrees with the corresponding  Artin $L$-function for $\rho$, up to a finite number of holomorphic nonvanishing factors, and, as originally proved by Artin, $L(\rho, s)$ is holomorphic and nonvanishing on $\Re(s)\ge 1$; see \cite[p.225]{CF10}, for example.
The corollary then follows from Theorem~\ref{thm:Lfunc}.
\end{proof}

\subsection{Sato--Tate for CM elliptic curves}

As a second application of Theorem~\ref{thm:Lfunc}, let us prove an equidistribution result for CM elliptic curves.
To do so we need to introduce Hecke characters, which we will view as (quasi-)characters of the id\`ele class group of a number field.

\begin{definition}
Let $K$ be a number field and let $\I_K$ denote its id\`ele group.
A \emph{Hecke character} is a continuous homomorphism
\[
\psi\colon \I_K\to\C^\times
\]
whose kernel contains $K^\times$.
The \emph{conductor} of $\psi$ is the $\Z_K$-ideal $\mathfrak f\coloneqq \prod_\p \p^{e_\p}$ in which each $e_\p$ is the minimal nonnegative integer for which $1+\hat\p^{e_\p}\subseteq \Z_{K_\p}^\times\hookrightarrow \I_K$ lies in the kernel of $\psi$ (all but finitely many~$e_\p$ are zero because $\psi$ is continuous); here $\hat\p$ denotes the maximal ideal of the valuation ring $\Z_{K_\p}$ of $K_\p$, the completion of $K$ with respect to its $\p$-adic absolute value.
\end{definition}

Each Hecke character $\psi$ has an associated \emph{Hecke $L$-function}
\[
L(\psi,s)\coloneqq \prod_{\p\,\nmid\, \mathfrak f} (1-\psi(\p)N(\p)^{-s})^{-1},
\]
where $\psi(\p)\coloneqq \psi(\pi_{\hat \p})$ for any uniformizer $\pi_{\hat \p}$ of $\hat\p$ (we have omitted the gamma factors at archimedean places).
We now want to consider the sequence of unitarized values
\[
x_\p\coloneqq \frac{\psi(\p)}{|\psi(\p)|}\in \U(1)
\]
indexed by primes $\p\nmid \f$ ordered by norm.

\begin{lemma}\label{lem:Hecke}
The sequence $(x_\p)$ is equidistributed in $\U(1)$.
\end{lemma}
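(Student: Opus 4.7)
The plan is to apply Theorem~\ref{thm:Lfunc} with $G = \U(1)$, whose space of conjugacy classes is $\U(1)$ itself. The irreducible characters of $\U(1)$ are the continuous homomorphisms $\phi_a\colon u\mapsto u^a$ for $a\in\Z$, with $\phi_0$ the trivial character. Equidistribution of $(x_\p)$ thus reduces to verifying that for each nonzero $a\in \Z$, the Dirichlet series
\[
L(\phi_a,s) = \prod_{\p\,\nmid\,\f}\bigl(1-x_\p^a N(\p)^{-s}\bigr)^{-1}
\]
converges on $\Re(s)>1$ and extends to a holomorphic nonvanishing function on the closed half-plane $\Re(s)\ge 1$.

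First I would decompose $\psi$ as $\psi_0\cdot\|\cdot\|^{s_0}$, where $\|\cdot\|$ denotes the idelic norm on $\I_K$, $s_0$ is a real number, and $\psi_0$ is a unitary Hecke character (values in $\U(1)$). Such a decomposition is standard: since $\R_{>0}$ admits no nontrivial compact subgroups and $|\psi|$ is a continuous homomorphism $\I_K/K^\times\to\R_{>0}$, one has $|\psi|=\|\cdot\|^{s_0}$ for some real $s_0$, and the quotient $\psi_0\coloneqq \psi/\|\cdot\|^{s_0}$ is a unitary Hecke character. With this normalization, $x_\p = \psi(\p)/|\psi(\p)| = \psi_0(\p)$ for every prime $\p$ unramified for $\psi$, and hence $x_\p^a = \psi_0^a(\p)$.

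Consequently $L(\phi_a,s)$ agrees with the Hecke $L$-function $L(\psi_0^a,s)$ up to at most finitely many Euler factors at primes dividing the conductor of $\psi_0^a$, each of which is holomorphic and nonvanishing on $\Re(s)\ge 1$. For any $a\ne 0$ for which $\psi_0^a$ is nontrivial, the classical theorem of Hecke (equivalently, a standard output of Tate's thesis) implies that $L(\psi_0^a,s)$ extends to an entire function of $s$ satisfying a functional equation, and, crucially, is nonvanishing on the line $\Re(s)=1$; combined with absolute convergence of the Euler product on $\Re(s)>1$, this gives the holomorphy and nonvanishing on $\Re(s)\ge 1$ required by Theorem~\ref{thm:Lfunc}. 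Applying that theorem then yields the equidistribution.

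The main obstacle is controlling the case when $\psi_0^a$ might be trivial. If $\psi_0$ has finite order, then some nonzero power $\psi_0^a$ is trivial, the corresponding $L(\phi_a,s)$ degenerates to (essentially) $\zeta_K(s)$ with its pole at $s=1$, and in fact $x_\p$ takes only finitely many values so equidistribution fails. The lemma therefore implicitly requires $\psi_0$ to have infinite order; this is automatic in the intended application to CM elliptic curves, since the relevant $\psi$ there encodes the action of Frobenius on a Tate module and so cannot factor through a finite quotient. In a fully general statement one would include this non-torsion condition as a hypothesis, and the proof above goes through unchanged.
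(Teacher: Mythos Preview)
Your proof follows the same route as the paper's: apply Theorem~\ref{thm:Lfunc} with $G=\U(1)$, identify $L(\phi_a,s)$ with a Hecke $L$-function for the unitarized $a$th power of $\psi$, and invoke Hecke's theorem on the holomorphy and nonvanishing of nontrivial Hecke $L$-functions on $\Re(s)\ge 1$ (with $\zeta_K(s)$ handling the trivial character). The paper's argument is terser but structurally identical.

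Where you go further is in making explicit the decomposition $\psi=\psi_0\cdot\|\cdot\|^{s_0}$ and flagging the finite-order obstruction: if $\psi_0$ has finite order, some $\psi_0^a$ with $a\ne 0$ is trivial, the corresponding $L$-function picks up the pole of $\zeta_K$ at $s=1$, and equidistribution genuinely fails. The paper's proof simply asserts that the relevant Hecke character is nontrivial without justifying this, so the lemma as stated (for an arbitrary Hecke character) is strictly speaking too general. Your observation that the hypothesis is automatically satisfied in the CM elliptic curve application---where $\psi_E$ has infinity type $(1,0)$ and so no power of its unitarization is trivial---is exactly the missing remark, and it would be reasonable to fold it into the statement or proof of the lemma.
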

\begin{proof}
As in the proof of Proposition~\ref{prop:efqdist}, the nontrivial irreducible characters of $\U(1)$ are those of the form $\phi_a(z)=z^a$ with $a\in\Z$ nonzero, and in each case the corresponding $L$-function is a Hecke $L$-function (if $\psi$ is a Hecke character, so is $\psi^a$ and its unitarized version).
If $\psi$ is trivial then, as in the proof of Corollary~\ref{cor:artinL}, $L(1,s)$ is holomorphic and nonvanishing on $\Re(s)\ge 1$ except for a simple pole at $s=1$, since the same is true of $\zeta_K(s)$.
Hecke proved \cite{He20} that when $\psi$ is nontrivial $L(\psi,s)$ is holomorphic and nonvanishing on $\Re(s)\ge 1$, and the lemma then follows from Theorem~\ref{thm:Lfunc}.
\end{proof}

As an application of Lemma~\ref{lem:Hecke}, we can now prove the Sato-Tate conjecture for CM elliptic curves.
Les us first consider the case where $K$ is an imaginary quadratic field and $E/K$ is an elliptic curve with CM by $K$ (so $K\simeq \End(E)\otimes_\Z\Q$).
As explained below, the general case (including $K=\Q$) follows easily.

Let $\f$ be the \emph{conductor} of $E$; this is a $\Z_K$-ideal divisible only by the primes of bad reduction for $E$; see \cite[\S IV.10]{Si94} for a definition.
A classical result of Deuring \cite[Thm.\ II.10.5]{Si94} implies the existence of a Hecke character $\psi_E$ of $K$ of conductor $\f$ such that for each prime $\p\nmid\f$ we have $|\psi_E(\p)|=N(\p)^{1/2}$ and
\[
\psi_E(\p)+\overline{\psi_E(\p)} = t_\p,
\]
where $t_\p\coloneqq \tr \pi_E = N(\p)+1-\#E_\p(\F_\p)\in \Z$ is the trace of Frobenius of the reduction of $E$ modulo $\p$.

\begin{proposition}
Let $K$ be an imaginary quadratic field and let $E/K$ be an elliptic curve of conductor~$\f$ with CM by $K$.
Let $P$ be the sequence of primes of $K$ that do not divide $\f$ ordered by norm (break ties arbitrarily), and for $\p\in P$ let $x_\p\coloneqq t_{\p}/N(\p)^{1/2}\in [-2,2]$ be the normalized Frobenius trace of $E_\p$.
The sequence $(x_\p)$ is equidistributed on $[-2,2]$ with respect to the measure
\[
\mu_{\rm cm} \coloneqq \frac{1}{\pi}\frac{dz}{\sqrt{4-z^2}}.
\]
\end{proposition}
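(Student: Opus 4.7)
The plan is to reduce the statement to the equidistribution result for unitarized Hecke character values (Lemma~\ref{lem:Hecke}) and then push forward to $[-2,2]$ via the trace map $u\mapsto u+\bar u$, reusing the Haar-measure computation already carried out in the proof of Proposition~\ref{prop:efqdist}.

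The first step is to invoke the Deuring-type theorem cited just above the statement, which supplies a Hecke character $\psi_E$ of $K$ of conductor $\f$ with $|\psi_E(\p)|=N(\p)^{1/2}$ and $\psi_E(\p)+\overline{\psi_E(\p)}=t_\p$ for every $\p\nmid\f$. Setting
\[
u_\p\coloneqq \frac{\psi_E(\p)}{|\psi_E(\p)|}=\frac{\psi_E(\p)}{N(\p)^{1/2}}\in \U(1),
\]
we apply Lemma~\ref{lem:Hecke} to $\psi_E$ to conclude that the sequence $(u_\p)_{\p\in P}$ is equidistributed in $\U(1)$ with respect to its Haar measure.

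Next, we observe that
\[
x_\p=\frac{t_\p}{N(\p)^{1/2}}=\frac{\psi_E(\p)+\overline{\psi_E(\p)}}{N(\p)^{1/2}}=u_\p+\bar u_\p,
\]
so $(x_\p)$ is the image of $(u_\p)$ under the continuous map $\Phi\colon \U(1)\to [-2,2]$ given by $\Phi(u)\coloneqq u+\bar u$. Since pullback by $\Phi$ carries $C([-2,2])$ into $C(\U(1))$, equidistribution is preserved by continuous pushforward, and therefore $(x_\p)$ is equidistributed with respect to $\Phi_*\mu_{\U(1)}$.

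Finally, we identify $\Phi_*\mu_{\U(1)}$ with $\mu_{\rm cm}$ by reusing the change-of-variables computation already performed in the proof of Proposition~\ref{prop:efqdist}: parametrizing $u=e^{i\theta}$, the Haar measure on $\U(1)$ becomes $\frac{1}{2\pi}d\theta$ on $[-\pi,\pi]$, and setting $z=2\cos\theta$ with $|dz|=\sqrt{4-z^2}\,d\theta$ (after folding together the two preimages $\pm\theta$) gives $\Phi_*\mu_{\U(1)}=\frac{1}{\pi}\frac{dz}{\sqrt{4-z^2}}=\mu_{\rm cm}$. No step presents a genuine obstacle: once Deuring's theorem and Lemma~\ref{lem:Hecke} are in hand, the remainder is a short composition with the pushforward calculation from Proposition~\ref{prop:efqdist}. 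The only subtlety worth checking is that our normalization $u_\p=\psi_E(\p)/|\psi_E(\p)|$ agrees with the unitarization used in Lemma~\ref{lem:Hecke}, which is immediate from $|\psi_E(\p)|=N(\p)^{1/2}$.
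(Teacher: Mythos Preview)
Your proof is correct and follows essentially the same approach as the paper: apply Lemma~\ref{lem:Hecke} to the unitarized values $\psi_E(\p)/N(\p)^{1/2}$, then push forward to $[-2,2]$ via $u\mapsto u+\bar u$, identifying the resulting measure with $\mu_{\rm cm}$ using the computation from Proposition~\ref{prop:efqdist}. Your write-up is a bit more explicit about the pushforward step, but the argument is the same.
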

\begin{proof}
By the previous lemma, the sequence $(\psi_E(\p)/N(\p)^{1/2})_{\p \in P}$ is equidistributed in $\U(1)$.
As shown in the proof of Proposition~\ref{prop:efqdist}, the measure $\mu_{\rm cm}$ is the pushforward of the Haar measure on $\U(1)$ to $[-2,2]$ under the map $u\mapsto u+\bar u$.
For each $\p\in P$ the image of $\psi_E(\p)/N(\p)^{1/2}$ under this map is
\[
\frac{\psi_E(\p)}{N(\p)^{1/2}} + \frac{\overline{\psi_E(\p)}}{N(\p)^{1/2}} = \frac{t_{p}}{N(\p)^{1/2}}=x_\p.\qedhere
\]
\end{proof}

Figure~\ref{fig:g1cm2} shows a trace histogram for the CM elliptic curve $y^2=x^3+1$ over its CM field $\Q(\sqrt{-3})$.

\begin{figure}[bh!]
\begin{center}
\animategraphics[width=\textwidth,poster=last]{1.5}{g1_D3_a1_}{0}{30}

\caption{Click image to animate (requires Adobe Reader), or visit this \href{http://math.mit.edu/~drew/g1_D2_a1f.gif}{web page}.}\label{fig:g1cm2}
\end{center}
\end{figure}

Let us now consider the case of an elliptic curve $E/\Q$ with CM by $F$.
For primes $p$ of good reduction that are inert in $F$, the endomorphism algebra $\End(E_p)_\Q\coloneqq \End(E_p)\otimes_\Z\Q$ of the reduced curve $E_p$ contains two distinct imaginary quadratic fields, one corresponding to the CM field $F\simeq \End(E)_\Q$ and the other generated by the Frobenius endomorphism (the two cannot coincide because $p$ is inert in $F$ but the Frobenius endomorphism has norm $p$ in $\End(E_p)_\Q$).
It follows that $\End(E_p)_\Q$ must be a quaternion algebra, $E_p$ is supersingular, and for $p>3$ we must have $t_p=0$, since $t_p\equiv 0\bmod p$ and $|t_p|\le 2\sqrt{p}$; see \cite[III,9,V.3]{Si09} for these and other facts about endomorphism rings of elliptic curves.

At split primes $p=\p\bar\p$ the reduced curve $E_p$ will be isomorphic to the reduction modulo $\p$ of its base change to $F$ (both of which are elliptic curves over $\Fp=\F_\p$), and will have the same trace of Frobenius $t_p=t_\p$.
By the Chebotarev density theorem, the split and inert primes both have density $1/2$, and it follows that the sequence of normalized Frobenius traces $x_\p\coloneqq t_p/\sqrt{p}\in  [-2,2]$ is equidistributed with respect to the measure $\frac{1}{2}\delta_0+ \frac{1}{2}\mu_{\rm cm}$, where we use the Dirac measure $\delta_0$ to put half the mass at~$0$ to account for the inert primes.
This can be seen in Figure~\ref{fig:g1cm}, which shows a trace histogram for the CM elliptic curve $y^2=x^3+1$ over $\Q$; the thin spike in the middle of the histogram at zero has area $1/2$ (one can also see that the nontrivial moments are half what they were in Figure~\ref{fig:g1cm2}).

\begin{figure}[htp!]
\begin{center}
\animategraphics[width=\textwidth,poster=last]{1.5}{g1_cm_a1_}{0}{30}

\caption{Click image to animate (requires Adobe Reader), or visit this \href{http://math.mit.edu/~drew/g1_D2_a1f.gif}{web page}.}\label{fig:g1cm}
\end{center}
\end{figure}

A similar argument applies when $E$ is defined over a number field $K$ that does not contain the CM field $F$.
For the sake of proving an equidistribution result we can restrict our attention to the degree-1 primes $\p$ of $K$, those for which $N(\p)=p$ is prime.
Half of these primes $\p$ will split in the compositum~$KF$, and the subsequence of normalized traces $x_\p$ at these primes will be equidistributed with respect to the measure $\mu_{\rm cm}$, and half will be inert in $KF$, in which case $x_\p=t_\p=0$.

\subsection{Sato--Tate for non-CM elliptic curves}

We can now state the Sato-Tate conjecture in the form originally given by Tate, following \cite[\S1A]{Se68}.
Tate's seminal paper \cite{Tate63} describes what is now known as the \emph{Tate conjecture}, which comes in two conjecturally equivalent forms \textbf{T1} and \textbf{T2}, the latter of which is stated in terms of $L$-functions.
The Sato-Tate conjecture is obtained by applying \textbf{T2} to all powers of a fixed elliptic curve $E/\Q$ (as products of abelian varieties); see \cite{Rama07} for an introduction to the Tate conjecture and an explanation of how the Sato-Tate conjecture fits within it.

Let $G$ be the compact group $\SU(2)$ of $2\times 2$ unitary matrices with determinant $1$.
The irreducible representations of $G$ are the $m$th symmetric powers $\rho_m$ of the natural representation $\rho_1$ of degree 2 given by the inclusion $\SU(2)\subseteq\GL_2(\C)$.
Each element of $X\coloneqq \conj(G)$ can be uniquely represented by a matrix of the form
\[
\begin{pmatrix}e^{i\theta} & 0\\ 0 &e^{-i\theta}\end{pmatrix},
\]
where $\theta\in [0,\pi]$ is the eigenangle of the conjugacy class.
It follows that each $f\in C(X)$ can be viewed as a continuous function $f(\theta)$ on the compact set $[0,\pi]$.

The pushforward of the Haar measure of $G$ to $X$ is given by
\begin{equation}\label{eq:su2haar}
\mu = \frac{2}{\pi}\sin^2\theta\,d\theta
\end{equation}
(see Exercise~\ref{ex:su2}), which means that for each $f\in C(X)$ we have
\[
\mu(f)=\frac{2}{\pi}\int_0^\pi f(\theta)\sin^2\theta\,d\theta.
\]

Let $E/\Q$ be an elliptic curve without CM, let $P\coloneqq (p)$ be the sequence of primes that do not divide the conductor $N$ of $E$, in order, and for each $p\in P$ let $x_p\in X$ to be the element of $X$ corresponding to the unique $\theta_p\in [0,\pi]$ for which $2\cos\theta_p\sqrt{p}= t_p\coloneqq p+1-\#E_p(\F_p)$ is the trace of Frobenius of the reduced curve $E_p$.

We are now in the setting of \S\ref{sec:eqLfunc}.  We have a compact group $G\coloneqq\SU(2)$, its space of conjugacy classes $X\coloneqq \conj(G)$, a number field $K=\Q$, a sequence  $P$ containing all but finitely many primes of~$K$ ordered by norm, a sequence $(x_p)$ in $X$ indexed by $P$, and for each integer $m\ge 1$, an irreducible representation $\rho_m\colon G\to \GL_{m+1}(\C)$.
The $L$-function corresponding to $\rho_m$ is given by
\[
L(\rho_m,s)\coloneqq \prod_{p\,\nmid\,N} \det(1-\rho_m(x_p)p^{-s})^{-1} = \prod_{p\,\nmid\,N} \prod_{k=0}^m (1-e^{i(m-2k)\theta_p}p^{-s})^{-1}.
\]
For each $p\nmid N$, let $\alpha_p$ and $\bar\alpha_p$ be the roots of $T^2-t_pT+p$, so that $\alpha_p = e^{i\theta_p}p^{1/2}$.
If we now define
\[
L^1_m(s)\coloneqq\prod_{p\,\nmid\, N}\prod_{r=0}^m(1-\alpha_p^{m-r}{\bar\alpha_p^r}p^{-s})^{-1},
\]
then for $m\ge 1$ we have
\[
L(\rho_m,s)=L_m^1(s-m/2).
\]
Tate conjectured in \cite{Tate63} that $L_m^1(s)$ is holomorphic and nonvanishing on $\Re(s)\ge 1+m/2$, which implies that each $L(\rho_m,s)$ is holomorphic and nonvanishing on $\Re(s)\ge 1$.
Assuming this is true, Theorem~\ref{thm:Lfunc} implies that the sequence $(x_p)$ is $\mu$-equidistributed, which is equivalent to the Sato-Tate conjecture.

We now recall the \emph{modularity theorem} for elliptic curves over $\Q$, which states that there is a one-to-one correspondence between isogeny classes of elliptic curves $E/\Q$ of conductor $N$ and modular forms
\[
f(z) =\sum_{n\ge 1}a_ne^{2\pi in z}\in S_2(\Gamma_0(N))^{\rm new}\qquad (a_n\in\Z \text{ with } a_1=1)
\]
that are eigenforms for the action of the Hecke algebra on the space $S_2(\Gamma_0(N))$ of cuspforms of weight~2 and level~$N$ and \emph{new} at level $N$, meaning not contained in $S_2(\Gamma_0(M))$ for any positive integer $M$ properly dividing~$N$.  Such modular forms $f$ are called (normalized) \emph{newforms}, of weight $2$ and level~$N$, with rational coefficients.
The modularity theorem was proved for squarefree $N$ by Taylor and Wiles \cite{TW95,Wiles95}, and extended to all conductors $N$ by Breuil, Conrad, Diamond, and Taylor \cite{BCDT01}.

The modular form $f$ is a simultaneous eigenform for all the Hecke operators $T_n$, and the normalization $a_1=1$ ensures that for each prime $p\nmid N$, the coefficient $a_p$ is the eigenvalue of $f$ for $T_p$.
Under the correspondence given by the modularity theorem, the eigenvalue~$a_p$ is equal to the trace of Frobenius~$t_p$ of the reduced curve $E_p$, where $E$ is any representative of the corresponding isogeny class.
Here we are using the fact that if $E$ and $E'$ are isogenous elliptic curves over $\Q$ they necessarily have the same conductor $N$ and the same trace of Frobenius $t_p$ at ever $p\nmid N$.

There is an $L$-function $L(f,s)$ associated to the modular form $f$, and the modularity theorem guarantees that it coincides with the $L$-function $L(E,s)$ of $E$.
So not only does $a_p=t_p$ for all $p\nmid N$, the Euler factors at the bad primes $p|N$ also agree.
We need not concern ourselves with Euler factors at these primes, other than to note that they are holomorphic and nonvanishing on $\Re(s)\ge 3/2$.
After removing the Euler factors at bad primes, the $L$-functions $L(E,s)$ and $L(f,s)$ both have the form
\[
\prod_{p\,\nmid\, N}(1-a_p p^{-s} +p^{1-2s})^{-1} = \prod_{p\,\nmid\, N}\prod_{r=0}^1(1-\alpha_p^{1-r}\bar\alpha_p^rp^{-s})^{-1} = L_1^1(s),
\]where $\alpha_p$ and $\bar\alpha_p$ are the roots of $T^2-a_pT+p=T^2-t_pT+p$.

The $L$-function $L(f,s)$ is holomorphic and nonvanishing on $\Re(s)\ge 3/2$; see \cite[Prop.\ 5.9.1]{DS05}.
The modularity theorem tells us that the same is true of $L(E,s)$, and therefore of $L_1^1(s)$.
Thus the modularity theorem proves that Tate's conjecture regarding $L_m^1(s)$ holds when $m=1$.
To prove the Sato-Tate conjecture one needs to show that this holds for all $m\ge 1$.

\begin{theorem}
Let $f(z)\coloneqq\sum_{n\ge 1} a_ne^{2\pi i zn}\in S_2(\Gamma_0(N)^{\rm new}$ be a normalized newform without CM.
For each prime $p\nmid N$ let $\alpha_p,\bar\alpha_p$ be the roots of $T^2-a_pT+p$.  Then
\[
\prod_{p\,\nmid\, N}\prod_{r=0}^m(1-\alpha_p^{m-r}\bar\alpha_p^rp^{-s})^{-1}=L_m^1(s)
\]
is holomorphic and nonvanishing on $\Re(s)\ge 1+m/2$.
\end{theorem}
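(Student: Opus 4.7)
The plan is to pass from the modular form $f$ to its compatible system of $\ell$-adic Galois representations, take $m$th symmetric powers, and prove that the resulting degree $m+1$ motive is (potentially) automorphic; the analytic theory of automorphic $L$-functions on $\GL_{m+1}$ then yields the stated holomorphy and non-vanishing on the edge of the critical strip.

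First I would invoke the construction of Eichler--Shimura and Deligne to attach to $f$ a strictly compatible system of two-dimensional $\ell$-adic Galois representations $\rho_{f,\ell}\colon\Gal(\Qbar/\Q)\to\GL_2(\Qbar_\ell)$, unramified outside $N\ell$ and satisfying $\det(1-\rho_{f,\ell}(\Frob_p)T)=1-a_pT+pT^2$ for $p\nmid N\ell$. Forming $\mathrm{Sym}^m\rho_{f,\ell}$ then identifies $L_m^1(s)$, up to finitely many Euler factors, with the $L$-function attached to this compatible system, so the desired analytic properties amount to proving that $\mathrm{Sym}^m\rho_{f,\ell}$ is \emph{automorphic}, meaning it corresponds to a cuspidal automorphic representation $\Pi_m$ of $\GL_{m+1}(\mathbf{A}_\Q)$ (equivalently, to a regular algebraic cuspidal automorphic representation after a twist to make it unitary).

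The substantive step, and the one I expect to be the main obstacle, is to establish the automorphy (or at least potential automorphy) of $\mathrm{Sym}^m\rho_{f,\ell}$. My plan is to apply the Taylor--Wiles--Kisin style modularity lifting machinery, in the generalized form developed by Clozel--Harris--Taylor and refined in the sequence \cite{HSBT10,BLGHT11,BLGG11}: find a totally real Galois extension $F/\Q$ over which one can exhibit a residually automorphic congruence (using a suitably chosen auxiliary CM family and an application of Moret-Bailly to realize the desired residual picture), then lift via an $R=T$ theorem to conclude that $\mathrm{Sym}^m\rho_{f,\ell}|_{G_F}$ is automorphic. The non-CM hypothesis on $f$ is used here to ensure that the residual representations remain big enough (large image in $\GL_{m+1}$) for the Taylor--Wiles primes construction to succeed, which is precisely where the genericity assumption enters. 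This is the deep input of the proof and the place where essentially all the work lies.

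Once potential automorphy is known, the final step is analytic and routine. Brauer's induction theorem applied to $\Gal(F/\Q)$ expresses $L_m^1(s)$ as an integer combination (in the group of virtual representations) of automorphic $L$-functions attached to Hecke characters of intermediate subfields twisted against $\mathrm{Sym}^m\rho_{f,\ell}|_{G_{F'}}$, each of which is a standard $L$-function on some $\GL_{m+1}(\mathbf{A}_{F'})$. For such $L$-functions, Jacquet--Shalika \cite{He20}-style arguments (or more precisely the Jacquet--Shalika theorem on non-vanishing of standard $L$-functions on $\Re(s)=1$, combined with the Rankin--Selberg method to control poles via $L(\Pi_m\times\widetilde\Pi_m,s)$) show holomorphy and non-vanishing on the line $\Re(s)=1$ in the unitary normalization, hence on $\Re(s)=1+m/2$ in the motivic normalization used for $L_m^1$. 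Cuspidality of $\Pi_m$ (again a consequence of the non-CM hypothesis, since a CM form is exactly the case in which some symmetric power becomes an isobaric sum) rules out a pole at $s=1+m/2$, and convergence of the Euler product on $\Re(s)>1+m/2$ together with the functional equation then gives holomorphy and non-vanishing on the closed half-plane $\Re(s)\ge 1+m/2$, as required.
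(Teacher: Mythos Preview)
Your outline is correct and is essentially the same approach as the paper, only at a different level of resolution: the paper's proof is a one-line citation to \cite[Theorem~B.2]{BLGHT11} (specialized to weight $2$, trivial nebentypus and trivial twist), whereas you have sketched the architecture of that theorem itself---Galois representations via Eichler--Shimura--Deligne, potential automorphy of $\mathrm{Sym}^m$ via the Clozel--Harris--Taylor and Harris--Shepherd-Barron--Taylor machinery, and descent by Brauer induction together with Jacquet--Shalika for the analytic conclusion.

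Two small corrections. First, your reference \cite{He20} is Hecke's 1920 paper; the non-vanishing input you want is Jacquet--Shalika, which is not in the paper's bibliography. Second, in the final paragraph you speak of ``cuspidality of $\Pi_m$'' over $\Q$, but the cited results give only \emph{potential} automorphy: one does not obtain a single cuspidal $\Pi_m$ on $\GL_{m+1}(\mathbf{A}_\Q)$, but rather automorphy over a totally real $F$, and the Brauer argument expresses $L_m^1(s)$ as an alternating product of automorphic $L$-functions over intermediate fields. The absence of a pole at $s=1+m/2$ then comes from the \emph{irreducibility} of $\mathrm{Sym}^m\rho_{f,\ell}$ (this is where non-CM enters) rather than from cuspidality of a single representation over $\Q$.
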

\begin{proof}
Apply \cite[Theorem B.2]{BLGHT11} with weight $k=2$, trivial nebentypus $\psi=1$, and trivial character $\chi=1$ (as noted in \cite{BLGHT11}, this special case was already addressed in \cite{HSBT10}).
\end{proof}

\begin{corollary}
The Sato-Tate conjecture (Theorem~\ref{thm:ST}) holds.
\end{corollary}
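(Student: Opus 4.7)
The plan is to combine the theorem just established with the equidistribution criterion of Theorem~\ref{thm:Lfunc}, reducing the Sato-Tate conjecture to statements about a family of $L$-functions indexed by the irreducible representations of $\SU(2)$. First I would recall the setup preceding the theorem: $G = \SU(2)$ with $X = \conj(G)$, the number field $K = \Q$, and the sequence $(x_p)$ indexed by primes $p\nmid N$ assigning to $p$ the conjugacy class with eigenangle $\theta_p\in[0,\pi]$ defined by $2\cos\theta_p\sqrt{p} = t_p$. The irreducible representations of $\SU(2)$ are exactly the symmetric powers $\rho_m$ for $m\ge 0$, so the hypothesis of Theorem~\ref{thm:Lfunc} will be verified once we check that each $L(\rho_m,s)$ has the required analytic behavior on $\Re(s)\ge 1$.

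Next I would handle the trivial case $m=0$: here $\rho_0 = 1$ and $L(1,s) = \prod_{p\nmid N}(1-p^{-s})^{-1}$, which agrees with the Riemann zeta function up to finitely many holomorphic nonvanishing Euler factors, so it is meromorphic on $\Re(s)\ge 1$ with only a simple pole at $s=1$ and no zeros. For $m\ge 1$, the identity $L(\rho_m,s) = L^1_m(s - m/2)$ derived above translates the theorem's conclusion (holomorphy and non-vanishing of $L^1_m$ on $\Re(s)\ge 1 + m/2$) into holomorphy and non-vanishing of $L(\rho_m,s)$ on $\Re(s)\ge 1$. Invoking the theorem requires that $E$ (equivalently, the newform $f$ associated to $E$ by the modularity theorem) has no CM, which is exactly our hypothesis.

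With the analytic input secured for every irreducible $\rho_m$, Theorem~\ref{thm:Lfunc} applied to the compact group $\SU(2)$ yields that $(x_p)$ is equidistributed in $X = \conj(\SU(2))$ with respect to the pushforward of the Haar measure. It then remains to verify that this equidistribution is literally the statement of Theorem~\ref{thm:ST}: using the parametrization $\theta\in[0,\pi]$ for conjugacy classes, the pushforward is the measure $\mu = \tfrac{2}{\pi}\sin^2\theta\,d\theta$ of \eqref{eq:su2haar}, and under $t = 2\cos\theta$ this becomes $\tfrac{1}{2\pi}\sqrt{4-t^2}\,dt$ on $[-2,2]$ (a standard change of variables, already used implicitly in Section~\ref{sec:secondexample}); since $x_p = t_p/\sqrt{p} = 2\cos\theta_p$, Proposition~\ref{prop:mueq} converts equidistribution in $X$ into the integral formula of Theorem~\ref{thm:ST}.

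The main obstacle has in fact already been dispatched: the deep content is the automorphy of the symmetric power $L$-functions $L^1_m(s)$, which is imported via \cite[Thm.~B.2]{BLGHT11} in the preceding theorem. What remains for this corollary is genuinely routine bookkeeping—matching the classical formulation of Sato-Tate against the abstract equidistribution criterion, and checking that finitely many missing Euler factors at bad primes (which are individually holomorphic and nonvanishing on $\Re(s)\ge 1$) do not affect the hypothesis of Theorem~\ref{thm:Lfunc}. So the proof should be short: cite modularity to introduce $f$, cite the theorem to obtain the analytic properties of each $L(\rho_m,s)$, apply Theorem~\ref{thm:Lfunc}, and translate.
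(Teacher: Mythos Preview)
Your proposal is correct and follows the same route as the paper: the text preceding the theorem already explains that holomorphy and nonvanishing of each $L_m^1(s)$ on $\Re(s)\ge 1+m/2$ yields the same for $L(\rho_m,s)$ on $\Re(s)\ge 1$ via the shift $L(\rho_m,s)=L_m^1(s-m/2)$, after which Theorem~\ref{thm:Lfunc} gives equidistribution in $\conj(\SU(2))$ and hence Theorem~\ref{thm:ST}. The paper states the corollary without a separate proof because this logic was laid out beforehand; your write-up simply makes the bookkeeping (the $m=0$ case, the invocation of modularity, and the change of variables to the trace measure) explicit.
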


\begin{remark}
The Sato-Tate conjecture is also known to hold for elliptic curves over totally real fields, and over CM fields (imaginary quadratic extensions of totally real fields).  The totally real case was initially proved for elliptic curves with potentially multiplicative reduction at some prime in \cite{HSBT10,Tay08}; it was later shown this technical assumption can be removed (see the introduction of \cite{BLGG11}).  The generalization to CM fields was obtained at a recent IAS workshop \cite{ACCGHLNSTT} and still in the process of being written up in detail.
As a consequence of this result the Sato-Tate conjecture for elliptic curves is now known for all number fields of degree $1$ or $2$ (but not for any higher degrees).
\end{remark}

\subsection{Exercises}
\begin{exercise}\label{ex:measure}
Let $X$ be a compact Hausdorff space.
Show that a set $S\subseteq X$ is $\mu$-quarrable for every measure $\mu$ on $X$ if and only if the set $S$ is both open and closed.
\end{exercise}
\begin{exercise}\label{ex:mueq}
Prove Proposition~\ref{prop:mueq}.
\end{exercise}
\begin{exercise}\label{ex:ord}
Let $E$ an elliptic curve over $\Fq$ and let $\alpha$ be a root of the characteristic polynomial of the Frobenius endomorphism $\pi_E$.
Prove that $\alpha/\sqrt{q}$ is a root of unity if and only if $E$ is supersingular.
\end{exercise}
\begin{exercise}\label{ex:su2}
Show that the set of conjugacy classes of $\SU(2)$ is in bijection with the set of eigenangles $\theta\in [0,\pi]$.
Then prove that the pushforward of the Haar measure of $\SU(2)$ onto $[0,\pi]$ is given by $\mu\coloneqq \frac{2}{\pi}\sin^2\theta\, d\theta$ (hint: show that $\SU(2)$ is isomorphic to the 3-sphere $S^3$ and use this isomorphism together with the translation invariance of the Haar measure to determine $\mu$)
\end{exercise}
\begin{exercise}\label{ex:STg1}
Compute the trace moment sequence for $\SU(2)$ (that is, prove \eqref{eq:cat}).
Embed $\U(1)$ in $\SU(2)$ via the map $u\mapsto \smallmat{u}{0}{0}{\bar u}$ and compute its trace moment sequence (compare to Figure~\ref{fig:g1cm2}).
Now determine the normalizer $N(\U(1))$ of $\U(1)$ in $\SU(2)$ and compute its trace moment sequence (compare to Figure~\ref{fig:g1cm}).
\end{exercise}

\section{Sato-Tate groups}\label{lec:STgroups}

In the previous section we showed that there are three distinct Sato-Tate distributions that arise for elliptic curves $E$ over number fields $K$ (only two of which occur when $K=\Q$).
All three distributions can be associated to the Haar measure of a compact subgroup $G\subseteq \SU(2)$, in which we embed $\U(1)$ via the map $u\mapsto \smallmat{u}{0}{0}{\bar u}$.
We are interested in the pushforward $\mu$ of the Haar measure onto $\conj(G)$, which can be expressed in terms of the eigenangle $\theta\in [0,\pi]$.  The three possibilities for $G$ are listed below.
\begin{itemize}
\item $\U(1)$: we have $\mu(\theta)=\frac{1}{\pi}d\theta$ and trace moments: $(1,0,2,0,6,0,20,0,70,0,252,\ldots)$.\\
This case arises for CM elliptic curves defined over a field that contains the CM field.
\smallskip
\item $N(\U(1))$: we have $\mu(\theta)=\frac{1}{2\pi}d\theta+\frac{1}{2}\delta_{\pi/2}$ and trace moments: $(1,0,1,0,3,0,10,0,35,0,126,\ldots)$.\\
This case arises for CM elliptic curves defined over a field that does not contain the CM field.
\smallskip
\item $\SU(2)$: we have $\mu(\theta)=\frac{2}{\pi}\sin^2\theta\,d\theta$ and trace moments: $(1,0,1,0,2,0,5,0,14,0,42,\ldots)$.\\
This case arises for all non-CM elliptic curves (conjecturally so when $K$ not totally real or CM).
\end{itemize}
We have written $\mu$ in terms of $\theta$, but we may view it as a linear function on the Banach space $C(X)$, where we identify $X\coloneqq \conj(G)$ with $[0,\pi]$, by defining $\mu(f)\coloneqq\int_0^\pi f(\theta) \mu(\theta)$, as in \S\ref{sec:measure}.
In particular,~$\mu$ assigns a value to the trace function $\tr\colon X\to [-2,2]$, where $\tr(\theta)=2\cos\theta$, and to its powers $\tr^n$, which allows us to compute the trace moment sequence $(\mu(\tr^n))_{n\ge 0}$.

Our goal in this section is to define the compact group $G$ as an invariant of the elliptic curve $E$, the \emph{Sato-Tate group} of~$E$, and to then generalize this definition to abelian varieties of arbitrary dimension.  This will allow us to state the Sato-Tate conjecture for abelian varieties as an equidistribution statement with respect to the Haar measure of the Sato-Tate group.

\subsection{The Sato-Tate group of an elliptic curve}

Thus far the link between the elliptic curve $E$ and the compact group $G$ whose Haar measure is claimed (and in many cases proved) to govern the distribution of Frobenius traces has been made via the measure $\mu$.
That is, we have an equidistribution claim for the sequence $(x_p)$ of normalized Frobenius traces associated to $E$ that is phrased in terms of a measure~$\mu$ that happens to be induced by the Haar measure of a compact group $G$. 
We now want to establish a direct relationship between $E$ and $G$ that defines $G$ as an arithmetic invariant of $E$, \emph{without assuming the Sato-Tate conjecture}.

In Section \ref{sec:wtzero} we considered the Galois representation $\rho_f\colon \Gal(\Qbar/\Q)\to \GL_d(\C)$ defined by the action of $\Gal(\Qbar/\Q)$ on the roots of a squarefree polynomial $f\in\Z[x]$.
We thereby obtained a compact group~$G_f$ and a map that sends each prime $p$ of good reduction for $f$ to an element of $\conj(G_f)$ (namely, the map $p\mapsto \rho_f(\Frob_p)$).
We were then able to relate the image of $p$ under this map to the quantity $N_f(p)$ of interest, via \eqref{eq:Nfp}.
This construction did not involve any discussion of equidistribution, but we could then prove, via the Chebotarev density theorem, that the conjugacy classes $\rho_f(p)$ are equidistributed with respect to the pushforward of the Haar measure to $\conj(G_f)$. 

We take a similar approach here.  To each elliptic curve $E$ over a number field $K$ we will associate a compact group $G$ that is constructed via a Galois representation attached to $E$, equipped with a map that sends each prime $\p$ of good reduction for $E$ to an element $x_\p$ of $\conj(G)$ that we can directly relate to the quantity $N_E(\p)\coloneqq p+1-t_\p$ whose distribution we wish to study.
We may then conjecture (and prove, when $E$ has CM or~$K$ is a totally real or CM field), that the sequence $(x_\p)$ is equidistributed in $X\coloneqq \conj(G)$ (with respect to the pushforward of the Haar measure of $G$).

The group $G$ is the \emph{Sato--Tate} group of $E$, and will be denoted $\ST(E)$.
It is a compact subgroup of $\SU(2)$, and our construction will make it easy to show that $\ST(E)$ is always one of the three groups $\U(1)$, $N(\U(1))$, $\SU(2)$ listed above, depending on whether $E$ has CM or not, and if so, whether the CM field is contained in the ground field or not.
None of this depends on any equidistribution results.
This construction will be our prototype for the definition of the Sato-Tate group of an abelian variety of arbitrary dimension~$g$, so we will work out the $g=1$ case in some detail.

In order to associate a Galois representation to $E/K$, we need a set on which $\Gal(\Kbar/K)$ can act.
For each integer $n\ge 1$, let $E[n]:=E(\Kbar)[n]$ denote the $n$-torsion subgroup of $E(\Kbar)$, a free $\Z/n\Z$-module of rank $2$ (see \cite[Cor.\ III.6.4]{Si09}).
The group $\Gal(\Kbar/K)$ acts on points in $E(\Kbar)$ coordinate-wise, and $E[n]$ is invariant under this action because it is the kernel of the multiplication-by-$n$ map $[n]$, an endomorphism of $E$ that is defined over~$K$; one can concretely define $E[n]$ as the zero locus of the $n$-division polynomials, which have coefficients in~$K$.
The action of $\Gal(\Kbar/K)$ on $E[n]$ induces the \emph{mod-$n$ Galois representation}
\[
\Gal(\Kbar/K)\to \Aut(E[n])\simeq \GL_2(\Z/n\Z).
\]
This Galois representation is insufficient for our purposes, because the image $M_\p$ of $\Frob_\p$ in $\GL_2(\Z/n\Z)$ does not determine $t_\p$, we only have $t_\p\equiv \tr M_\p\bmod n$; we need to let $\Gal(\Kbar/K)$ act on a bigger set.

So let us fix a prime $\ell$ (any prime will do), and consider the inverse system
\[
\cdots\overset{[\ell]}{\longrightarrow} E[\ell^3]\overset{[\ell]}{\longrightarrow}E[\ell^2]\overset{[\ell]}\longrightarrow E[\ell].
\]
The inverse limit
\[
T_\ell\coloneqq \varprojlim_n E[\ell^n]
\]
is the $\ell$-adic \emph{Tate-module} of $E$; it is a free $\Z_\ell$-module of rank 2.
The group $\Gal(\Kbar/K)$ acts on $T_\ell$ via its action on the groups $E[\ell^n]$, and this action is compatible with the multiplication-by-$\ell$ map $[\ell]$ because this map is defined over $K$ (it can be written as a rational map with coefficients in $K$).
This yields the \emph{$\ell$-adic Galois representation}
\[
\rho_{E,\ell}\colon \Gal(\Kbar/K)\to \Aut(T_\ell)\simeq \GL_2(\Z_\ell).
\]
The representation $\rho_{E,\ell}$ enjoys the following property: for every prime $\p\nmid \ell$ of good reduction for~$E$ the image of $\Frob_\p$ is a matrix $M_\p\in \GL_2(\Z_\ell)$ that has the same characteristic polynomial as the Frobenius endomorphism of $E_\p$, namely, $T^2-t_\p T+N(\p)$, where $t_\p\coloneqq \tr \pi_{E_\p}$.
Note that the matrix $M_\p$ is determined only up to conjugacy; there is ambiguity both in our choice of $\Frob_\p$ (see \S\ref{sec:wtzero}) and in our choice of a basis for $T_\ell$, which fixes the isomorphism $\Aut(T_\ell)\simeq \GL_2(\Z_\ell)$.
We should thus think of $\rho_{E,\ell}(\Frob_\p)$ as representing a conjugacy class in $\GL_2(\Z_\ell)$.

We prefer to work over the field $\Q_\ell$, rather than its ring of integers $\Z_\ell$, so let us define the \emph{rational Tate module}
\[
V_\ell\coloneqq T_\ell\otimes_\Z\Q,
\]
which is a 2-dimensional $\Q_\ell$-vector space equipped with an action of $\Gal(\Kbar/K)$.
This allows us to view the Galois representation $\rho_{E,\ell}$ as having image $G_{\ell}\subseteq \GL_2(\Q_\ell)$.
We also prefer to work with an algebraic group, so let us define $G_{\ell}^{\rm zar}$ to be the $\Q_\ell$-algebraic group obtained by taking the Zariski closure of $G_{\ell}$ in $\GL_2(\Q_\ell)$. This means that $G_{\ell}^{\rm zar}$ is the affine variety defined by the ideal of $\Q_\ell$-polynomials that vanish on the set $G_\ell$; it is a subvariety of $\GL_2/\Q_\ell$ that is closed under the group operation and thus an algebraic group over $\Q_\ell$.  The algebraic group $G_{\ell}^{\rm zar}$ is the \emph{$\ell$-adic monodromy group} of $E$ (it is also denoted $G_{\ell}^{\rm alg}$).

\begin{background}[Algebraic groups]
An affine (or linear) \emph{algebraic group} over a field $k$ is a group object in the category of (not necessarily irreducible) affine varieties over $k$.
The only projective algebraic groups we shall consider are smooth and connected, hence abelian varieties, so when we use the term algebraic group without qualification, we mean an affine algebraic group.\footnote{There are interesting algebraic groups (group schemes of finite type over a field) that are neither affine nor projective (even if we restrict our attention to those that are smooth and connected), but we shall not consider them here.}
The canonical example is $\GL_n$, which can be defined as an affine variety in $\A^{n^2+1}$ (over any field) by the equation $t \det M=1$ (here $\det M$ denotes the determinant polynomial in $n^2$ variables $M_{ij}$), with morphisms $m\colon \GL_n\times\GL_n\to\GL_n$ and $i\colon \GL_n\to\GL_n$ defined by polynomial maps corresponding to matrix multiplication and inversion (one uses $t$ as the inverse of $\det A$ when defining~$i$).
The classical groups $\SL_n$, $\Sp_{2n}$,$\U_n$, $\SU_n$, $\O_n$, $\SO_n$  are all affine algebraic groups (assume $\mathrm{char}(k)\ne 2$ for $\O_n$ and $\SO_n$), as are the groups $\USp_{2n}\coloneqq \Sp_{2n}\cap \U_{2n}$ and $\GSp_{2n}$ that are of particular interest to us; the $\R$ and $\C$ points of these groups are \emph{Lie groups} (differentiable manifolds with a group structure).
If $G$ is an affine algebraic group over $k$ and $L/k$ is a field extension, the Zariski closure of any subgroup $H\subseteq G(L)$ of the $L$-points of~$G$ is equal to the set of rational points of an affine variety defined over $L$ that is also an algebraic group via the morphisms $m$ and~$i$ defining~$G$.  Thus every subgroup $H\subseteq G(L)$ uniquely determines an algebraic group over $L$ whose rational points coincide with the Zariski closure of $H$; as an abuse of terminology we may refer to this algebraic group as the Zariski closure of $H$ in $G(L)$ (or in $G_L$, the base change of $G$ to $L$).
The connected and irreducible components of an algebraic group $G$ coincide, and are necessarily finite in number.
The connected component $G^0$ of the identity is itself an algebraic group, a normal subgroup of $G$ compatible with base change.
For more on algebraic groups see any of the classic texts \cite{Borel91,Hu75,Spr98}, or see \cite{Milne:iAG} for a more modern treatment.
\end{background}

Having defined the $\Q_\ell$-algebraic group $G_{\ell}^{\rm zar}$, we now restrict our attention to the subgroup $G_{\ell}^{1,\rm zar}$ obtained by imposing the symplectic constraint
\[
M^t\Omega M=\Omega, \qquad\qquad \Omega\coloneqq \smallmat{0}{-1}{1}{0},
\]
which corresponds to putting a \emph{symplectic form} (a nondegenerate bilinear alternating pairing) on the vector space $V_\ell$ (we could of course choose any $\Omega$ that defines such a form).
This condition can clearly be expressed by a polynomial (a quadratic form in fact), thus $G_{\ell}^{1,\rm zar}$ is an algebraic group over $\Q_\ell$ contained in $\Sp_2$.
We remark that $\Sp_2=\SL_2$, so we could have just required $\det M=1$, but this is an accident of low dimension: the inclusion $\Sp_{2n}\subseteq \SL_{2n}$ is strict for all $n>1$.

Finally, let us choose an embedding $\iota\colon \Q_\ell\to \C$, and let $G_{\ell,\iota}^{1,\rm zar}$ be the $\C$-algebraic group obtained from $G_{\ell}^{1,\rm zar}$ by base change to $\C$ (via $\iota$).
The group $G_{\ell,\iota}^{1,\rm zar}(\C)$ is a subgroup of $\Sp_2(\C)$ that we may view as a Lie group with finitely many connected components.
It therefore contains a maximal compact subgroup that is unique up to conjugacy \cite[Thm.\ IV.3.5]{OV94}, and we take this as the \emph{Sato--Tate group} $\ST(E)$ of $E$ (which is thus defined only up to conjugacy).
It is a compact subgroup of $\USp(2)=\SU(2)$ (this equality is another accident of low dimension).

For each prime $\p\nmid \ell$ of good reduction for $E$, let $M_\p$ denote the image of $\Frob_\p$ under the maps
\[
\Gal(\Kbar/K)\overset{\rho_{E,\ell}}\longrightarrow G_{\ell}\hookrightarrow G_{\ell}^{\rm zar}(\Q_\ell)\hookrightarrow G_{\ell,\iota}^{\rm zar}(\C),
\]
where the map in the middle is inclusion and we use the embedding $\iota\colon\Q_\ell\to\C$ to obtain the last map.
We now consider the normalized Frobenius image
\[
\bar M_\p\coloneqq N(\p)^{-1/2}M_\p;
\]
it is a matrix with trace $t_\p/N(\p)^{-1/2}\in [-2,2]$ and determinant $1$, and its eigenvalues $e^{\pm i\theta_\p}$ lie on the unit circle.\footnote{Note that we embed $G_{\ell}^{\rm zar}(\Q_\ell)$ in $G_{\ell,\iota}^{\rm zar}(\C)$ \emph{before} normalizing by $N(\p)^{-1/2}$; as pointed out by Serre \cite[p. 131]{Se12}, we want to take the square root in $\C$ where it is unambiguously defined.}
The eigenangle $\theta_\p$ determines a unique conjugacy class in $\ST(E)$, which we take as $x_\p$.
The characteristic polynomial of $x_\p$ is the normalized $L$-polynomial
$\bar L_\p(T)\coloneqq L_p(N(\p)^{-1/2}T)$, where $L_\p(T)$ is the numerator of the zeta function of $E_\p$, and $L_\p(N(\p)^{-s})$ is the Euler factor at $\p$ in the $L$-series $L(E,s)$.

The Sato--Tate conjecture then amounts to the statement that the sequence $(x_\p)$ in $X\coloneqq \conj(\ST(E))$ is equidistributed.
Notice that the statement is the same in both the CM and non-CM cases, but the measure on $X$ is different, because $\ST(E)$ is different.  Indeed, there are three possibilities for $\ST(E)$, corresponding to the three distributions that we noted at the beginning of this section.

\begin{theorem}\label{thm:STg1}
Let $E$ be an elliptic curve over a number field $K$.
Up to conjugacy in $\SU(2)$ we have
\[
\ST(E)=\begin{cases}
\U(1) &\text{ if $E$ has CM defined over $K$},\\
N(\U(1)) &\text{ if $E$ has CM not defined over $K$},\\
\SU(2) &\text{ if $E$ does not have CM},
\end{cases}
\]
where $\U(1)$ is embedded in $\SU(2)$ via $u\mapsto \smallmat{u}{0}{0}{\bar u}$.
\end{theorem}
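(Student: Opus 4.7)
The plan is to execute the construction of $\ST(E)$ case by case, computing the algebraic monodromy group $G_\ell^{\rm zar}$ from structural information about $\rho_{E,\ell}$, then restricting to $\Sp_2=\SL_2$, base changing via $\iota$, and picking out a maximal compact subgroup of the resulting complex Lie group. Because of the low-dimensional accidents $\Sp_2=\SL_2$ and $\USp(2)=\SU(2)$, every candidate $\ST(E)$ sits inside $\SU(2)$, so it suffices to identify $G_\ell^{1,\rm zar}$ as an algebraic subgroup of $\SL_2/\Q_\ell$ and then read off its maximal compact subgroup.

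First I would handle the non-CM case, where the main input is Serre's open image theorem: for $E/K$ without CM, $\rho_{E,\ell}(\Gal(\Kbar/K))$ is open in $\GL_2(\Z_\ell)$ for every prime $\ell$. Any $\ell$-adically open subgroup of $\GL_2(\Z_\ell)$ contains, via the $\ell$-adic logarithm, a neighborhood of the identity whose image spans the Lie algebra $\mathfrak{gl}_2(\Q_\ell)$, hence is Zariski dense in $\GL_2/\Q_\ell$. It follows that $G_\ell^{\rm zar}=\GL_2/\Q_\ell$, so $G_\ell^{1,\rm zar}=\GL_2\cap\Sp_2=\SL_2/\Q_\ell$; a maximal compact subgroup of $\SL_2(\C)$ is $\SU(2)$, giving the third case.

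Next, suppose $E$ has CM by the imaginary quadratic field $F\simeq\End(E)\otimes\Q$. The $F$-action makes $V_\ell\coloneqq T_\ell\otimes\Q$ a free module of rank one over $F\otimes_\Q\Q_\ell$, exhibiting the commutant $(F\otimes\Q_\ell)^\times$ as a two-dimensional (nonsplit) maximal torus $T\subseteq\GL_2/\Q_\ell$. If $F\subseteq K$, then all CM endomorphisms are $K$-rational, so $\rho_{E,\ell}$ factors through $T(\Q_\ell)$; by Serre--Tate (extending Deuring's analysis via Hecke characters) the image is open in $T(\Q_\ell)$ and hence Zariski dense, so $G_\ell^{\rm zar}=T$. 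After base change via $\iota$, $T$ splits as $\Gm\times\Gm$ with the two factors indexed by the two embeddings $F\hookrightarrow\C$; the symplectic determinant-one condition $xy=1$ cuts out the subtorus $\{(u,u^{-1})\}\simeq\Gm$, whose maximal compact is the copy of $\U(1)$ embedded in $\SU(2)$ by $u\mapsto\smallmat{u}{0}{0}{u^{-1}}=\smallmat{u}{0}{0}{\bar u}$, since $\bar u=u^{-1}$ on $\U(1)$. If instead $F\not\subseteq K$, set $K'=KF$: the index-two subgroup $\Gal(\Kbar/K')$ still lands in $T$ by the previous case, while any lift of the nontrivial element of $\Gal(K'/K)$ acts on $V_\ell$ via the unique $\Q_\ell$-algebra automorphism of $F\otimes\Q_\ell$ swapping the two $\C$-embeddings, which in the diagonalizing basis is antidiagonal. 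Hence $G_\ell^{\rm zar}$ is the two-component normalizer $N_{\GL_2}(T)$, and the symplectic restriction followed by passage to a maximal compact yields $N_{\SU(2)}(\U(1))$.

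The main obstacle is marshalling the two deep inputs -- Serre's open image theorem in the non-CM case and the Serre--Tate analysis of CM Galois representations -- once these are granted, the remaining work consists of standard facts about tori, their normalizers in $\GL_2$ and $\SL_2$, and maximal compact subgroups of complex reductive Lie groups (with uniqueness up to conjugacy guaranteed by the theorem of Mostow cited in the excerpt). A minor technical point is independence of the choices of $\ell$ and $\iota$, but this is automatic here: the three possible outputs $\SU(2)$, $N(\U(1))$, $\U(1)$ are pairwise non-isomorphic as real Lie groups (distinguished by dimension and component count), and the dichotomies governing which one occurs -- having CM or not, and if so whether the CM is defined over $K$ -- are intrinsic to $E/K$.
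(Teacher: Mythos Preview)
Your approach is essentially the same as the paper's: Serre's open image theorem handles the non-CM case identically, and in the CM cases both you and the paper place $G_\ell$ inside a Cartan subgroup (you via the $F\otimes\Q_\ell$-module structure of $V_\ell$, the paper via the abelianness of $\Gal(K(E[\ell^\infty])/K)$ from CM theory) and then in its normalizer when $F\not\subseteq K$, with your version being slightly more explicit about why the Zariski closure fills out the full torus. One minor correction: the torus $T=(F\otimes\Q_\ell)^\times$ is nonsplit only when $\ell$ is inert in $F$---if $\ell$ splits in $F$ it is already a split torus over $\Q_\ell$---though this has no bearing on the argument since you base-change to $\C$ via $\iota$ in any case.
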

\begin{proof}
If $E$ has CM defined over $K$ then $G_\ell$ is abelian, because the action of $\Gal(\Kbar/K)$ on $V_\ell$ factors through the abelian group $\Gal(L/K)$, where $L\coloneqq K(E[\ell^\infty])$ is obtained by adjoining the coordinates of the $\ell$-power torsion points of $E$; this follows from \cite[Thm.\ II.2.3]{Si94}.
Therefore $G_\ell$ lies in a \emph{Cartan subgroup} of $\GL_2(\Q_\ell)$ (a maximal abelian subgroup), which necessarily splits when we pass to $G_{\ell,\iota}^{\rm zar}(\C)$, where it is conjugate to the group of diagonal matrices.
This implies that $\ST(E)$ is conjugate to $\U(1)$, the subgroup of diagonal matrices in $\SU(2)$.

If $E$ has CM not defined over $K$, then $G_\ell$ lies in the normalizer of a Cartan subgroup of $\GL_2(\Q_\ell)$, but not in the Cartan itself, and $\ST(E)$ is conjugate to the normalizer $N(\U(1))$ of $\U(1)$ in $\SU(2)$; the argument is as above, but now the action of $\Gal(\Kbar/K)$ factors through $\Gal(FL/K)$, where $F$ is the CM field and $\Gal(FL/K)$ contains the abelian subgroup $\Gal(FL/FK)$ with index~2.

If $E$ does not have CM then Serre's open image theorem (see \cite[\S IV.3]{Se68} and \cite{Se72}) implies that $G_\ell$ is a finite index subgroup of $\GL_2(\Z_\ell)$; we therefore have $G_\ell^{1,\rm zar}=\SL_2$, which implies $\ST(E)=\SU(2)$.
\end{proof}

It follows from Theorem~\ref{thm:STg1} that (up to conjugacy), the Sato--Tate group $\ST(E)$ does not depend on our choice of the prime $\ell$ or the embedding $\iota\colon \Q_\ell\to\C$ that we used.
We should also note that $\ST(E)$ depends only on the isogeny class of $E$; this follows from the fact that we used the rational Tate module~$V_\ell$ to define it (indeed, two abelian varieties over a number field are isogenous if and only if their rational Tate modules are isomorphic as Galois modules, by Faltings' isogeny theorem \cite{Falt83}, but we are only using the easy direction of this equivalence here).

\subsection{The Sato--Tate group of an abelian variety}

We now wish to generalize our definition of the Sato--Tate group of an elliptic curve to abelian varieties.
Recall that an \emph{abelian variety} is a smooth connected projective variety that is also an algebraic group, where the group operations are now given by morphisms of projective varieties; on any affine patch they can be defined by a polynomial map.
Remarkably, the fact that abelian varieties are commutative algebraic groups is not part of the definition, it is a consequence; see \cite[Cor.\,1.4]{Milne:AV}.
We also recall that an \emph{isogeny} of abelian varieties is simply an isogeny of algebraic groups, a surjective morphism with finite kernel.

Abelian varieties of dimension $g$ may arise as the Jacobian $\Jac(C)$ of a smooth projective curve $C/k$ of genus $g$.
If $C$ has a $k$-rational point (as when $C$ is an elliptic curve), one can functorially identify $\Jac(C)$ with the \emph{divisor class group} $\Pic^0(C)$, the group of degree-zero divisors modulo principal divisors, but one can unambiguously define the abelian variety $\Jac(C)$ in any case; see \cite[Ch. III]{Milne:AV} for details.

If $C$ is a smooth projective curve over a number field $K$ and $A\coloneqq \Jac(C)$ is its Jacobian, then for every prime $\p$ of good reduction for $C$, the abelian variety $A$ also has good reduction at $\p$,\footnote{For $g>1$ the converse does not hold (in general); this impacts only finitely many primes $\p$ and will not concern us.} and the $L$-polynomial $L_\p(T)$ appearing in the numerator of the zeta function $Z_{C_p}(T)$ is reciprocal to the characteristic polynomial $\chi_p(T)$ of the Frobenius endomorphism $\pi_{A_\p}$ of $A_\p$, which acts on points of $A$ via the $N(\p)$-power Frobenius automorphism (coordinate-wise).
In particular, we have the identity
\begin{equation}\label{eq:LpAV}
L_\p(T) = T^{2g}\chi_\p(T^{-1}),
\end{equation}
in which both sides are integer polynomials of degree $2g$ whose complex roots have absolute value $N(\p)^{-1/2}$.
As with elliptic curves, one can consider the $L$-function $L(A,s)$ attached to $A$, which is defined as an Euler product with factors $L_\p(N(\p)^{-s})$ at each prime $\p$ where $A$ has good reduction.\footnote{Explicitly determining the Euler factors at bad primes is difficult when $\dim A >1$. Practical methods are known only in special cases, such as when $A$ is the Jacobian of a hyperelliptic curve (even in this case there is still room for improvement).}
Studying the distribution of the normalized $L$-polynomials $\bar L_\p(T)$ associated to $C$ is thus equivalent to studying the distribution of the normalized characteristic polynomials of $\pi_{A_\p}$, and also equivalent to studying the distribution of the normalized Euler factors of $L(A,s)$.

\begin{remark}
Each of these three perspectives is successively more general than the previous, the last vastly so.
There are abelian varieties over $K$ that are not the Jacobian of any curve defined over~$K$, and $L$-functions that can be written as Euler products over primes of $K$ that are not the $L$-function of any abelian variety.
One can more generally consider the distribution of normalized Euler factors of \emph{motivic $L$-functions}, which we also expect to be governed by the Haar measure of a Sato-Tate group associated to the underlying motive, as defined in \cite{Se94,Se12}; see \cite{FKS16} for some concrete examples in weight 3.
\end{remark}

The recipe for defining the Sato-Tate group $\ST(A)$ of an abelian variety $A/K$ of genus $g$ is a direct generalization of the $g=1$ case.
We proceed as follows:
\begin{enumerate}[1.]
\setlength{\itemsep}{2pt}
\item Pick a prime $\ell$, define the Tate module $T_\ell\coloneqq \varprojlim_n A[\ell^n]$, a free $\Z_\ell$-module of rank $2g$, and the rational Tate module $V_\ell\coloneqq T_\ell\otimes_\Z \Q$, a $\Q_\ell$-vector space of dimension $2g$.
\item Use the Galois representation $\rho_{A,\ell}\colon \Gal(\Kbar/K)\to\Aut(V_\ell)\simeq \GL_{2g}(\Q_\ell)$ to define $G_\ell\coloneqq\im\rho_{A,\ell}$.
\item Let $G_\ell^{\rm zar}$ be the Zariski closure of $G_\ell$ in $\GL_{2g}(\Q_\ell)$ (as an algebraic group), and define $G_\ell^{1,\rm zar}$ by adding the symplectic constraint $M^t\Omega M=\Omega$, so that $G_\ell^{1,\rm zar}$ is a $\Q_\ell$-algebraic subgroup of $\Sp_{2g}$.
\item Pick an embedding $\iota\colon \Q_\ell\to \C$ and use it to define $G_{\ell,\iota}^{1,\rm zar}$ as the base-change of $G_\ell^{1,\rm zar}$ to $\C$.
\item Define $\ST(A)\subseteq \USp(2g)$ as a maximal compact subgroup of $G_{\ell,\iota}^{1,\rm zar}(\C)$, unique up to conjugacy.
\item For each good prime $\p\nmid \ell$, let $M_\p$ be the image of $\Frob_\p$ in $G_{\ell,\iota}^{\rm zar}(\C)$ and define $x_\p\in \conj(\ST(A))$ to be the conjugacy class of $\overline M_\p\coloneqq N(\p)^{-1/2}M_\p,$ in $\ST(A)$.
\end{enumerate}
Step 6 requires some justification; it is not obvious why $\overline M_\p$ should necessarily be conjugate to an element of $\ST(A)$.
Here we are relying on two key facts.

First, the image $G_\ell$ of $\rho_{A,\ell}$ in $\GL_{2g}(\Q_\ell)$ actually lies in $\GSp_{2g}(\Q_\ell)$, the group of \emph{symplectic similitudes}.
The algebraic group $\GSp_{2g}$ is defined by imposing the constraint
\[
M^t\Omega M=\lambda\Omega,\qquad\Omega\coloneqq \smallmat{0}{-I_g}{I_g}{0},
\]
where $\lambda$ is necessarily an element of the multiplicative group $\Gm\coloneqq \GL_1$, since $M$ is invertible.
The morphism $\GSp_{2g}\to \Gm$ defined by $\lambda$ is the \emph{similitude character}, and we have an exact sequence of algebraic groups
\[
1\to \Sp_{2g}\hookrightarrow \GSp_{2g}\overset{\lambda}{\longrightarrow}\Gm\to 1.
\]
The action of $\Gal(\Kbar/K)$ on the Tate module is compatible with the Weil pairing, and this forces the image $G_\ell$ of $\rho_{E,\ell}$ to lie in $\GSp_{2g}(\Q_\ell)$; see Exercise~\ref{ex:gsp}.
By fixing a symplectic basis for $V_\ell$ in step 1 we can view $\rho_{A,\ell}$ as a continuous homomorphism
\[
\rho_{A,\ell}\colon \Gal(\Kbar/K)\to \GSp_{2g}(\Q_\ell)\subseteq \GL_{2g}(\Q_\ell)
\]
For $g=1$ we have $\GL_2=\GSp_2$, but for $g>1$ the algebraic group $\GSp_{2g}$ is properly contained in $\GL_{2g}$.

Second, we are relying on the fact that $M_\p$, and therefore $\overline M_\p$, is \emph{semisimple} (diagonalizable, since we are working over $\C$).
This follows from Tate's proof of the Tate conjecture for abelian varieties over finite fields (combine the main theorem and part (a) of Theorem 2 in \cite{Tate66}).
The matrix $\overline M_\p$ is thus diagonalizable and has eigenvalues of absolute value 1; it therefore lies in a compact subgroup of $G_{\ell,\iota}^{1,\rm zar}(\C)$ (take the closure of the group it generates).  This compact group is necessarily conjugate to a subgroup of the maximal~compact subgroup $\ST(A)$, which must contain an element conjugate to $\overline M_\p$.

\begin{remark}
When defining the Sato-Tate group in more general settings one instead uses the semisimple component of the (multiplicative) Jordan decomposition (see \cite[Thm.\ I.4.4]{Borel91}) of $\overline M_\p$ to define $x_\p$, as in \cite[\S 8.3.3]{Se12}.
This avoids the need to assume the conjectured \emph{semisimplicity of Frobenius}, which is known for abelian varieties but not in general.
\end{remark}

\begin{background}[Weil pairing]
If $A$ is an abelian variety over a field $k$ and $A^\vee$ is its dual abelian variety (see \cite[\S I.8]{Milne:AV}), then for each $n\ge 1$ prime to the characteristic of $k$, the \emph{Weil pairing} is a nondegenerate bilinear map
\[
A[n]\times A^\vee[n] \to \mu_n(\kbar)
\]
that commutes with the action of $\Gal(\kbar/k)$; here $\mu_n$ denotes the group of $n$th roots of unity (the algebraic group defined by $x^n=1$).
Letting $n$ vary over powers of a prime $\ell\ne{\rm char}(k)$ and taking inverse limits yields a bilinear map on the corresponding Tate modules:
\[
e_{\ell}\colon T_\ell\times T_\ell^\vee \to \mu_{\ell^\infty}(\kbar) \coloneqq \varprojlim_n \mu_{\ell^n}(\kbar).
\]
Given a \emph{polarization}, an isogeny $\phi\colon A\to A^\vee$, we can use it to define a bilinear pairing
\begin{align*}
e_{\ell}^\phi\colon T_\ell\times T_\ell&\to \mu_{\ell^\infty}(\kbar)\\
(x,y)&\mapsto e_\ell(x,\phi(y))
\end{align*}
that is also compatible with the action of $\Gal(\kbar/k)$.
One can always choose a polarization $\phi$ so that the pairing $e_{\ell}^\phi$ is nondegenerate and skew symmetric, meaning that $e_\ell^\phi(a,b)=e_\ell^\phi(b,a)^{-1}$ for all $a,b\in T_\ell$; see \cite[Prop.\ I.13.2]{Milne:AV}.
When~$A$ is the Jacobian of a curve it is naturally equipped with a \emph{principal polarization} $\phi$, an isomorphism $A\overset{\sim}{\rightarrow} A^\vee$, for which this automatically holds; in this situation it is common to simply identify $e_\ell$ with $e_\ell^\phi$ without mentioning $\phi$ explicitly.
\end{background}

We should note that our definition of the Sato-Tate group $\ST(A)$ required us to choose a prime $\ell$ and an embedding $\iota:\Q_\ell\to \C$.
Up to conjugacy in $\USp(2g)$ one expects the Sato-Tate group to be independent of these choices; this is known for $g\le 3$ (see \cite{BK15}), but open in general.
We shall nevertheless refer to $\ST(A)$ as ``the'' Sato-Tate group of $A$, with the understanding that we are fixing once and for all a prime~$\ell$ and an embedding  $\iota:\Q_\ell\to \C$ (note that these choices do not depend on $A$ or even its dimension $g$).

\subsection{The Sato-Tate conjecture for abelian varieties}
Having defined the Sato-Tate group of an abelian variety over a number field we can now state the Sato-Tate conjecture for abelian varieties.

\begin{conjecture}
Let $A$ be an abelian variety over a number field $K$, let $\ST(A)$ denote its Sato-Tate group, and let $(x_\p)$ be the sequence of conjugacy classes of normalized images of Frobenius elements in $\ST(A)$ at primes~$\p$ of good reduction for $A$, ordered by norm (break ties arbitrarily).
Then the sequence $(x_\p)$ is equidistributed (with respect to the pushforward of the Haar measure of $\ST(A)$ to its space of conjugacy classes).
\end{conjecture}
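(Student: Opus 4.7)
The plan is to reduce the equidistribution claim to holomorphy and non-vanishing of a family of $L$-functions, exactly as Tate's argument does in the elliptic curve case. By Theorem~\ref{thm:Lfunc} (or equivalently Corollary~\ref{cor:char} applied to matrix entries of irreducible representations), equidistribution of $(x_\p)$ in $\conj(\ST(A))$ with respect to the pushforward of the Haar measure is equivalent to the assertion that for every nontrivial irreducible representation $\rho\colon \ST(A)\to\GL_d(\C)$ the $L$-function
\[
L(\rho,s) \coloneqq \prod_\p \det\bigl(1-\rho(x_\p)N(\p)^{-s}\bigr)^{-1}
\]
extends analytically past its line of absolute convergence $\Re(s)>1$ to a function that is holomorphic and non-vanishing on $\Re(s)\ge 1$. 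So the task is entirely moved into analytic number theory: produce these $L$-functions and establish their analytic properties.

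The second step is to identify each $L(\rho,s)$ with a motivic $L$-function attached to $A$. Since $\ST(A)\subseteq \USp(2g)$, every irreducible representation of $\ST(A)$ occurs as an irreducible summand of some tensor product of the standard representation on $\C^{2g}$ and its dual. On the $\ell$-adic side, the Weil-pairing compatibility places $G_\ell$ inside $\GSp_{2g}(\Q_\ell)$, so the same tensor construction applied to $V_\ell$ produces a compatible system of Galois representations cut out (by the action of $\ST(A)$, which commutes with Frobenius up to the Tate twist) as a direct summand of $V_\ell^{\otimes a}\otimes (V_\ell^\vee)^{\otimes b}$. Semisimplicity of Frobenius for abelian varieties over finite fields, due to Tate, guarantees that the Euler factors at good primes are exactly $\det(1-\rho(\overline M_\p)N(\p)^{-s})^{-1}$, so $L(\rho,s)$ agrees with the motivic $L$-function of this summand up to finitely many Euler factors at bad primes; the latter are easily seen to be holomorphic and non-vanishing on $\Re(s)\ge 1$.

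The core obstacle, which is the unresolved heart of the conjecture as soon as $g>1$, is establishing the requisite analytic properties of these motivic $L$-functions. The template, following \cite{HSBT10,BLGHT11}, is potential automorphy: find a totally real or CM extension $L/K$ over which each irreducible summand becomes automorphic, so that base-changed $L$-functions over $L$ inherit holomorphy and non-vanishing on $\Re(s)\ge 1$ from the Godement--Jacquet theory of $L$-functions on $\GL_n$. One then descends to $K$ via Brauer's induction theorem, writing $L(\rho,s)$ as an integer combination of induced Artin-twisted automorphic $L$-functions from intermediate fields, each with the desired analytic properties, and rules out poles and zeros on the line $\Re(s)=1$ using the standard Rankin--Selberg non-vanishing arguments (as for Hecke's theorem for $\zeta_K$).

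For elliptic curves this program is complete over totally real and CM fields, and the CM case in arbitrary dimension is already within reach via Hecke characters (as in Lemma~\ref{lem:Hecke}), but in positive dimension the only representations $\rho$ that are currently handled in general are those appearing in low symmetric and tensor powers of $H^1$. Thus my proposal reduces the conjecture to a precise, but presently inaccessible, family of automorphy statements for the tensor-power Galois representations attached to $A$; producing a complete proof for all abelian varieties would require substantial new automorphy-lifting machinery beyond what is currently available, and this is where I expect the real work to lie.
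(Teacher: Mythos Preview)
The statement you are addressing is a \emph{conjecture}, not a theorem; the paper states it without proof and indeed emphasizes that it is open in general (known only for $g=1$ over totally real and CM fields, and in certain special cases for higher $g$). There is therefore no proof in the paper to compare your proposal against.

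What you have written is not a proof but a correct and well-organized summary of the expected strategy: reduce equidistribution to analytic continuation and non-vanishing of the $L$-functions $L(\rho,s)$ via Theorem~\ref{thm:Lfunc}, identify these with motivic $L$-functions attached to tensor constructions on $V_\ell$, and then attempt to prove the required analytic properties via potential automorphy and Brauer induction. This is exactly the template that succeeded for elliptic curves, and your account of where it breaks down (the automorphy input for the higher tensor-power Galois representations attached to $A$) is accurate. You are also right that the CM case is more accessible via Hecke characters. But you yourself acknowledge in your final paragraph that the core step is ``presently inaccessible'' and would require ``substantial new automorphy-lifting machinery''; a proposal that explicitly defers the main difficulty to unavailable results is an outline of a program, not a proof. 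You should present it as such.
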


\subsection{The identity component of the Sato-Tate group}\label{sec:stid}

There are two algebraic groups that one can associate to an abelian variety $A$ over a number field $K$ that are closely related to its Sato--Tate group, the \emph{Mumford--Tate group} and the \emph{Hodge group}, both of which conjecturally determine the identity component of the Sato--Tate group (provably so whenever the Mumford--Tate conjecture is known, which includes all abelian varieties of dimension $g\le 3$, as shown in \cite{BK15}).
In order to define these groups we need to recall some facts about complex abelian varieties and their associated Hodge structures.

\begin{background}[complex abelian varieties]
Let $A$ be an abelian variety of dimension $g$ over $\C$.
Then $A(\C)$ is a connected compact Lie group and therefore isomorphic to a torus $V/\Lambda$, where $V\simeq \C^g$ is a complex vector space of dimension $g$ and $\Lambda\simeq \Z^{2g}$ is a full lattice in $V$ that we view as a free $\Z$-module;
one can obtain $\Lambda$ as the kernel of the exponential map $\exp\colon T_0(A(\C))\to A(\C)$, where $T_0(A(\C))$ denotes the tangent space at the identity.
While every complex abelian variety corresponds to a complex torus, the converse is true only when $g=1$.
The complex tori $X\coloneqq V/\Lambda$ that correspond to abelian varieties are those that admit a \emph{polarization} (or \emph{Riemann form}), 
a positive definite Hermitian form $H\colon V\times V\to\C$ with $\Im H(\Lambda,\Lambda)=\Z$ (here $\Im$ means imaginary part).
Given a polarization $H$ on $X$, the map $v\mapsto H(v,\cdot)$ defines an isogeny to the \emph{dual torus} $X^\vee\coloneqq V^*/\Lambda^*$, where
\[
V^*\coloneqq \{f\colon V\to \C:f(\alpha v)=\bar\alpha f(v)\text{ and } f(v_1+v_2)=f(v_1)+f(v_2)\},
\]
and $\Lambda^*\coloneqq \{f\in V^*:\Im f(\Lambda)\subseteq \Z\}$.
This isogeny is a polarization of $X$ as an abelian variety; conversely, any polarization on $A$ (one always exists) can be used to define a polarization on the complex torus $A(\C)$.
One can then show that the map $A\mapsto A(\C)$ defines an equivalence of categories between complex abelian varieties and polarizable complex tori. For more background on complex abelian varieties, see the overviews in \cite[\S 1]{Milne:AV} or \cite [\S 1]{Mu74}, or see \cite{BL04} for a comprehensive treatment.
\end{background}

Now let $A$ be an abelian variety over a number field $K$, fix an embedding $K\hookrightarrow \C$, and let $\C^g/\Lambda$ be the complex torus corresponding to $A(\C)$.
We may identify $\Lambda$ with the singular homology group $H_1(A(\C),\Z)$, and we similarly have $\Lambda_R\coloneqq \Lambda\otimes_\Z R \simeq H_1(A(\C),R)$ for any ring $R$.

The isomorphisms $A(\C)\simeq \C^g/\Lambda$ and $A(\C)\simeq\R^{2g}/\Lambda$ of complex and real Lie groups allow us to view
\[
\Lambda_\R \simeq H_1(A(\C),\R)
\]
as a real vector space of dimension $2g$ equipped with a \emph{complex structure}, by which we mean an $\R$-algebra homomorphism $h\colon \C\to \End(\Lambda_\R)$.
In the language of Hodge theory, this amounts to the statement that $(\Lambda,h)$ is an \emph{integral Hodge structure} (pure of weight~ $-1$).

We can also view $h$ as morphism of $\R$-algebraic groups $h\colon \SS\to \GL_{\Lambda_\R}$.
Here $\SS$ denotes the \emph{Deligne torus} (also known as the \emph{Serre torus}), obtained by viewing $\C^\times$ as an $\R$-algebraic group (this amounts to taking the restriction of scalars of $\Gm\coloneqq \GL_1$ from $\C$ to $\R$; see Exercise ~\ref{ex:DeligneTorus}).  The morphism $h$ can be defined over $\R$ because $\C^{g}/\Lambda$ is a polarizable torus, since it comes from an abelian variety (in general this need not hold).
The real Lie group $\SS(\R)\simeq \C^\times$ is generated by $\R^\times$ and $\U(1)=\{z\in\C^\times:z\bar z=1\}$, which intersect in $\{\pm 1\}$; taking Zariski closures yields $\R$-algebraic subgroups $\Gm$ and $\U_1$ of $\SS$ that intersect in~$\mu_2$.
Restricting $h$ to $\U_1\subseteq \SS$ yields a morphism $\U_1\to\GL_{\Lambda_\R}$ with the following property: the image of each $u\in \U_1(\R)= \U(1)$ has eigenvalues $u,u^{-1}$ with multiplicity $g$; see \cite[Prop.\ 17.1.1]{BL04}.
The image of such a map is known as a \emph{Hodge circle}.

The \emph{rational Hodge structure} $(\Lambda_\Q,h)$ is obtained by replacing $\Lambda$ with $\Lambda_\Q:=\Lambda\otimes_\Z\Q$ and can be used to define the Mumford-Tate group.

\begin{definition}
The \emph{Mumford--Tate group} $\MT(A)$ is the smallest $\Q$-algebraic group $G$ in $\GL_{\Lambda_\Q}$ for which $h(\SS)\subseteq G(\R)$; equivalently, it is the $\Q$-Zariski closure of $h(\SS(\R))$ in $\GL_{\Lambda_\R}$.
The \emph{Hodge group} $\Hg(A)$ is similarly defined as the $\Q$-Zariski closure of $h(\U(1))$ in $\GL_{\Lambda_\R}$.
\end{definition}

As defined above, the Mumford--Tate group $\MT(A)$ is a $\Q$-algebraic subgroup of $\GL_{2g}$.
But the complex torus $\C^{g}/\Lambda$ is polarizable, which means that we can put a symplectic form on $\Lambda_R$ that is compatible with~$h$, and this implies that in fact $\MT(A)$ is a $\Q$-algebraic subgroup of $\GSp_{2g}$.
Similarly, the Hodge group $\Hg(A)$ is a $\Q$-algebraic subgroup of $\Sp_{2g}$, and in fact $\Hg(A)=\MT(A)\cap \Sp_{2g}$; this is sometimes used as an alternative definition of $\Hg(A)$.
Much of the interest in the Hodge group arises from the fact that it gives us an isomorphism of $\Q$-algebras
\[
\End(A_\C)_\Q \simeq \End(\Lambda_\Q)^{\Hg(A)},
\]
where $\End(A_\C)_\Q\coloneqq \End(A_\C)\otimes_\Z\Q$ and $\Hg(A)$ acts on $\End(\Lambda_\Q)$ by conjugation; see \cite[Prop.\ 17.3.4]{BL04}.
To see why this isomorphism is useful, let us note one application.

\begin{theorem}\label{thm:cm}
For an abelian variety $A$ of dimension $g$ over a number field $K$, the Hodge group $\Hg(A)$ is commutative if and only if the endomorphism algebra $\End(A_{\Kbar})_\Q$ contains a commutative semisimple $\Q$-algebra of dimension $2g$.
\end{theorem}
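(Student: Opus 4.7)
The plan is to unpack the cited identification $\End(A_\C)_\Q \simeq \End(\Lambda_\Q)^{\Hg(A)}$ (Prop.\ 17.3.4 in \cite{BL04}) and work purely on the representation-theoretic side. First I would reduce $\End(A_{\Kbar})_\Q$ to $\End(A_\C)_\Q$: after choosing the embedding $K\hookrightarrow\C$ used to define the Hodge structure, any endomorphism of $A_\C$ is defined over a finite extension of $K$ that embeds into $\Kbar$, so the two endomorphism algebras coincide. Writing $V\coloneqq\Lambda_\Q$ (of $\Q$-dimension $2g$) and $H\coloneqq\Hg(A)\subseteq\GL(V)$, the theorem becomes: $H$ is commutative if and only if the centralizer $\End_H(V)$ contains a commutative semisimple $\Q$-subalgebra of dimension $\dim V$.

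For the $(\Leftarrow)$ direction, suppose $E\subseteq \End_H(V)$ is a commutative semisimple $\Q$-subalgebra of dimension $2g$. Since $E$ is a product of number fields $F_1\times\cdots\times F_r$, the faithful $E$-module $V$ decomposes as $\bigoplus V_i$ with $V_i$ an $F_i$-vector space, and the constraint $\sum [F_i:\Q]\dim_{F_i}V_i=\sum [F_i:\Q]$ forces each $V_i$ to be one-dimensional over $F_i$. Thus $V$ is free of rank $1$ over $E$, so $\End_E(V)=E$ acting by multiplication. Since $H$ centralizes $E$ by construction, $H\subseteq Z_{\GL(V)}(E)=E^\times$, which is a commutative algebraic group. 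Hence $H$ is commutative.

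For the $(\Rightarrow)$ direction, assume $H$ is commutative. It is connected (being the Zariski closure of the connected group $h(\U(1))$) and reductive (by Deligne's theorem on polarizable rational Hodge structures, applicable because the polarization on $A$ yields one on $(\Lambda_\Q,h)$), hence a $\Q$-torus. Every $\Q$-torus in $\GL(V)$ sits inside a maximal torus: its centralizer is a Levi subgroup of $\GL(V)$, and any maximal torus of that Levi is maximal in $\GL(V)$. Maximal tori of $\GL(V)$ over $\Q$ are precisely the unit groups $E^\times$ of étale $\Q$-subalgebras $E\subseteq \End(V)$ of dimension $2g$. Such an $E$ is automatically commutative and semisimple of the required dimension, and because $H\subseteq E^\times$ commutes with $E$, we have $E\subseteq \End_H(V)\simeq \End(A_{\Kbar})_\Q$.

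The substantive inputs are thus the cited isomorphism and the reductivity of $\Hg(A)$; once those are in hand, the proof is linear algebra. The one step that would be worth stating carefully (and which is the real content beyond what the excerpt already asserts) is the classification of maximal $\Q$-tori of $\GL(V)$ in terms of étale $\Q$-subalgebras of $\End(V)$, which I would treat as standard.
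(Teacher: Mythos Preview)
Your argument is correct. The paper itself does not prove this statement; it simply cites \cite[Prop.\ 17.3.5]{BL04} and moves on. What you have written is essentially the standard proof (and is in the spirit of what Birkenhake--Lange do): reduce to the representation-theoretic statement via $\End(A_\C)_\Q\simeq\End_H(V)$, then for $(\Leftarrow)$ use the dimension count to see $V$ is free of rank~$1$ over $E$, and for $(\Rightarrow)$ use that a connected reductive commutative group is a torus, hence lies in a maximal $\Q$-torus $E^\times$ of $\GL(V)$.

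Two minor remarks worth tightening if you write this up. First, the step ``every $\Q$-torus in $\GL(V)$ is contained in a maximal $\Q$-torus of $\GL(V)$'' is the only place where something nontrivial over a non-algebraically-closed field is used; your justification via the centralizer being a connected reductive $\Q$-group (hence possessing a $\Q$-maximal torus, by Grothendieck's theorem) is the right one, and I would state it rather than calling it standard. Second, the reductivity of $\Hg(A)$ is exactly where the polarizability of the Hodge structure enters; you have this right, but it is worth flagging as the nontrivial external input alongside the cited isomorphism.
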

\begin{proof}
See \cite[Prop.\ 17.3.5]{BL04}.
\end{proof}
\noindent
For $g=1$ the abelian varieties $A$ that satisfy the two equivalent properties of Theorem~\ref{thm:cm} are CM elliptic curves.  More generally, such abelian varieties are said to be of \emph{CM-type}.
For abelian varieties of general type one has the opposite extreme: $\End(A_{\Kbar})_\Q=\Q$ and $\Hg(A)=\Sp_{2g}$; see \cite[Prop.\ 17.4.2]{BL04}.

In the previous section we defined two $\Q_\ell$-algebraic groups $G_\ell^{\rm zar}\subseteq \GSp_{2g}$ and $G_\ell^{1,\rm zar}\subseteq \Sp_{2g}$ associated to $A$.
It is reasonable to ask how they are related to the $\Q$-algebraic groups $\MT(A)$ and $\Hg(A)$.
Unlike the groups $G_\ell^{\rm zar}$ and $G_\ell^{1,\rm zar}$, the algebraic groups $\MT(A)$ and $\Hg(A)$ are necessarily connected (by construction).\footnote{This is true more generally for all motives of odd weight.  For motives of even weight the situation is more delicate; complications arise from the fact that we are then working with orthogonal groups rather than symplectic groups; see \cite{BK15,BK16}.}
Deligne proved that the identity component of $G_\ell^{\rm zar}$ is always a subgroup of $\MT(A)\otimes_\Q \Q_\ell$, equivalently, that the identity component of $G_\ell^{1,\rm zar}$ is a subgroup of $\Hg(A)\otimes_\Q \Q_\ell$); see \cite{Del82}.
It is conjectured that these inclusions are in fact equalities.

\begin{conjecture}[\textsc{Mumford--Tate Conjecture}]
The identity component of $G_\ell^{\rm zar}$ is equal to $\MT(A)\otimes_\Q \Q_\ell$; equivalently, the identity component of $G_\ell^{1,\rm zar}$ is equal to $\Hg(A)\otimes_\Q \Q_\ell$.
\end{conjecture}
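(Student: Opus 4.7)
The plan is to establish the inclusion opposite to Deligne's theorem, which already gives $(G_\ell^{\rm zar})^0 \subseteq \MT(A) \otimes_\Q \Q_\ell$; equivalently, one must show that the identity component of $G_\ell^{1,\rm zar}$ contains $\Hg(A) \otimes_\Q \Q_\ell$. After replacing $K$ by a finite extension if necessary, I would first assume that $G_\ell^{\rm zar}$ is connected, which does not change the identity component. The strategy has two halves: a commutant comparison and a rank (or maximal torus) comparison, which together pin down a connected reductive subgroup of $\MT(A) \otimes \Q_\ell$ up to equality.

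For the commutant side, I would invoke Faltings' isogeny theorem, which asserts that $\rho_{A,\ell}$ is semisimple and that $\End_{\Q_\ell[\Gal(\Kbar/K)]}(V_\ell) \simeq \End(A_K) \otimes_\Z \Q_\ell$. This shows $G_\ell^{\rm zar}$ is reductive and identifies its commutant in $\End(V_\ell)$ with the endomorphism algebra of $A$. The analogous Hodge-theoretic statement $\End(\Lambda_\Q)^{\Hg(A)} \simeq \End(A_\C)_\Q$ from Theorem~\ref{thm:cm} gives the commutant of $\Hg(A)$ in $\End(\Lambda_\Q)$, and after base change to $\Kbar$ the two endomorphism algebras agree, so both groups share a common commutant inside $\End_{\Q_\ell}(V_\ell \otimes \bar\Q_\ell)$.

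The deeper half is to match maximal tori. By the Chebotarev density theorem, the semisimple parts of the images $\rho_{A,\ell}(\Frob_\p)$ are Zariski dense in $G_\ell^{\rm zar}$; each generates a \emph{Frobenius torus} $T_\p$, and a theorem of Serre shows that for a density-one set of primes~$\p$ the torus $T_\p$ is a maximal torus of $(G_\ell^{\rm zar})^0$. I would then try to show the character group of $T_\p$, read off from the Newton polygon of the characteristic polynomial of Frobenius, carries enough information to reconstruct the rank and root datum of $\MT(A)$. Combined with the commutant calculation and the reductivity of both groups, this would reduce the conjecture to checking that $G_\ell^{\rm zar}$ and $\MT(A) \otimes \Q_\ell$ have the same rank.

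The main obstacle is precisely this last rank comparison: a priori $(G_\ell^{\rm zar})^0$ could be a proper reductive subgroup of $\MT(A) \otimes \Q_\ell$ with the same commutant but strictly smaller rank, and ruling this out requires a strong lower bound on the dimensions of Frobenius tori that is sensitive to the motivic origin of $V_\ell$. For $g \le 3$ (the cases settled by Banaszak and Kedlaya \cite{BK15}) one overcomes this by traversing the Albert classification of endomorphism algebras of abelian varieties and handling each type by explicit Lie-algebra and rank computations, often leveraging known cases of the Tate conjecture. For general $g$ the obstacle is essentially unresolved: even the possible Mumford--Tate groups are not fully classified, and producing Frobenius tori of large enough dimension in a uniform way is precisely the content that keeps the Mumford--Tate conjecture open.
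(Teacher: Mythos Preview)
The statement you are addressing is labeled a \emph{conjecture} in the paper, and the paper offers no proof of it; it only records Deligne's inclusion $(G_\ell^{\rm zar})^0 \subseteq \MT(A)\otimes_\Q\Q_\ell$ and remarks that the full equality is known for $g\le 3$ by \cite{BK15,MZ99}. So there is no ``paper's own proof'' to compare against, and your write-up is not a proof either --- as you yourself acknowledge in the final paragraph, the rank comparison you isolate is exactly the unresolved core of the problem.

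That said, as a \emph{strategy sketch} your outline is accurate and well-organized: the reduction to connected $G_\ell^{\rm zar}$ after a finite base change, the use of Faltings' theorem to get reductivity and to match commutants, Serre's Frobenius tori as a source of maximal tori, and the identification of the rank obstruction are all standard ingredients in the literature surrounding the conjecture. Your citation of Theorem~\ref{thm:cm} for the Hodge-side commutant is slightly off --- that theorem is about commutativity of $\Hg(A)$, not its commutant --- but the underlying fact $\End(A_\C)_\Q \simeq \End(\Lambda_\Q)^{\Hg(A)}$ is stated just before it in the paper and is the correct input. The honest concluding paragraph, where you explain why the argument stalls for general $g$, is the right way to present an open conjecture: it would be misleading to dress this up as a proof attempt when the missing step is the whole difficulty.
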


This conjecture is known to hold for abelian varieties of dimension $g\le 3$; see \cite[Th. 6.11]{BK15} where it is shown that this follows from \cite{MZ99}.
When it holds, the Mumford--Tate group (and the Hodge group) uniquely determines the identity component of the Sato--Tate group, up to conjugation in $\USp(2g)$; see \cite[Lemma 2.8]{FKRS12}.
Neither the Mumford--Tate group nor the Hodge group tell us anything about the component groups of $G_\ell^{\rm zar}$, $G_\ell^{1,\rm zar}$, $\ST(A)$ (the three are isomorphic; see \cite[\S 8.3.4]{Se12}), but there is a closely related $\Q$-algebraic group that conjecturally does.

\begin{conjecture}[\textsc{Algebraic Sato--Tate Conjecture}]
There exists a $\Q$-algebraic subgroup $\AST(A)$ of $\Sp_{2g}$ such that $G_\ell^{1,\rm zar}=\AST(A)\otimes_\Q \Q_\ell$.
\end{conjecture}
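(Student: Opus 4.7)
The plan is to build $\AST(A)$ in two pieces, an identity component coming from Hodge theory and a component group coming from a Galois quotient, and then assemble them into a $\Q$-algebraic subgroup of $\Sp_{2g}$ whose base change to $\Q_\ell$ recovers $G_\ell^{1,\rm zar}$. Throughout, I would work with the symplectic representation $\rho_{A,\ell}\colon\Gal(\Kbar/K)\to\GSp(\Lambda_{\Q_\ell})$ arising from a fixed polarization, and compare it to the Hodge-theoretic data $(\Lambda_\Q,h)$ attached to $A_\C$.

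First, I would address the identity component. By Deligne, $(G_\ell^{1,\rm zar})^0\subseteq \Hg(A)\otimes_\Q\Q_\ell$, and under the Mumford--Tate conjecture (known for $g\le 3$) this inclusion is an equality. So the natural candidate for the identity component of $\AST(A)$ is $\Hg(A)$ itself, which is already a $\Q$-algebraic subgroup of $\Sp_{2g}$ by construction. In the generality of the conjecture, I would take the Mumford--Tate conjecture as the input for this step; absent that, one would need to show directly that $(G_\ell^{1,\rm zar})^0$ descends to a $\Q$-form, which is essentially the content of the conjecture for the connected part.

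Next I would handle the component group. Define the \emph{connected monodromy field} $K^{\rm conn}_\ell$ as the fixed field of the open subgroup $\rho_{A,\ell}^{-1}((G_\ell^{1,\rm zar})^0)\subseteq\Gal(\Kbar/K)$; this is a finite Galois extension of $K$, and by results of Serre the finite quotient $\Gamma_\ell\coloneqq\Gal(K^{\rm conn}_\ell/K)$ is canonically isomorphic to the component group of $G_\ell^{1,\rm zar}$. A key lemma I would need to establish is that $K^{\rm conn}_\ell$ is independent of $\ell$, call the common field $K^{\rm conn}$; this is known conditional on (and closely related to) the Mumford--Tate conjecture, and is known unconditionally for $g\le 3$. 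Given this, $\Gamma\coloneqq\Gal(K^{\rm conn}/K)$ acts on $\Hg(A)$ by conjugation inside $\GL(\Lambda_\Q)$, because the conjugation action of any lift in $G_\ell^{1,\rm zar}$ preserves $(G_\ell^{1,\rm zar})^0=\Hg(A)\otimes\Q_\ell$ and the action is defined over $\Q$ (it permutes Hodge cycles, hence respects the $\Q$-structure). I would then define $\AST(A)$ as the $\Q$-algebraic subgroup of $\Sp_{2g}$ generated by $\Hg(A)$ together with a finite set of coset representatives realizing $\Gamma$ inside $\Sp_{2g}(\Q)$, or equivalently as a suitable algebraic extension of $\Gamma$ by $\Hg(A)$.

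Finally, I would check that $\AST(A)\otimes_\Q\Q_\ell=G_\ell^{1,\rm zar}$. The identity components match by the Mumford--Tate conjecture, and the component groups match by the $\ell$-independence of $K^{\rm conn}$ and the canonical identification of $\Gamma$ with $\pi_0(G_\ell^{1,\rm zar})$; one still has to check that the chosen coset representatives for $\Gamma$ in $\Sp_{2g}(\Q)$ lift to $\Q_\ell$-points in the correct connected components of $G_\ell^{1,\rm zar}$, which can be arranged by choosing the lifts to be images of Frobenius elements (which lie in $G_\ell$ by construction) after verifying these are $\Q$-rational up to $\Hg(A)$-conjugacy. The main obstacle is the $\ell$-independence statement: proving that the field $K^{\rm conn}_\ell$, and indeed the entire decomposition of $G_\ell^{1,\rm zar}$ into identity component and component group, is intrinsic to $A$ rather than an artifact of the choice of $\ell$. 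For $g\le 3$ this is within reach via the proof of Mumford--Tate in \cite{BK15}; in general this is the essential difficulty and is what makes the algebraic Sato--Tate conjecture genuinely open.
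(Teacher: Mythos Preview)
This statement is labeled as a \emph{conjecture} in the paper, and the paper offers no proof; it only remarks that Banaszak and Kedlaya \cite{BK15} prove it for $g\le 3$ via an explicit construction using \emph{twisted Lefschetz groups}. So there is no ``paper's own proof'' to compare against, and your document is (appropriately) a proof plan rather than a proof. That said, your strategy differs substantively from the Banaszak--Kedlaya approach, and it has a real gap worth naming.

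Your plan is to take $\Hg(A)$ as the identity component (assuming Mumford--Tate) and then adjoin coset representatives for $\Gamma=\Gal(K^{\rm conn}/K)$ lying in $\Sp_{2g}(\Q)$. The problem is that you never establish the existence of such $\Q$-rational representatives. Frobenius elements live in $G_\ell\subseteq\GSp_{2g}(\Q_\ell)$, not in $\Sp_{2g}(\Q)$, and the assertion that they are ``$\Q$-rational up to $\Hg(A)$-conjugacy'' is exactly the missing content of the conjecture for the disconnected part; it does not follow from Mumford--Tate or from the $\ell$-independence of $K^{\rm conn}$. Knowing that $\pi_0(G_\ell^{1,\rm zar})\simeq\Gamma$ abstractly does not tell you that the extension $1\to\Hg(A)\to\,?\,\to\Gamma\to 1$ is realized inside $\Sp_{2g}$ over $\Q$; there can be nontrivial obstructions to splitting or to $\Q$-descent of the non-identity components. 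The Banaszak--Kedlaya construction sidesteps this by writing down a candidate group $\AST(A)$ directly over $\Q$ (built from Lefschetz groups and endomorphism data, which are intrinsically $\Q$-rational) and then checking on the $\ell$-adic side that it matches $G_\ell^{1,\rm zar}$; your plan instead tries to descend $G_\ell^{1,\rm zar}$ to $\Q$, which is harder and is precisely what is not known in general.
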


Banaszak and Kedlaya \cite{BK15} have shown that this conjecture holds for $g\le 3$ via an explicit description of $\AST(A)$ using \emph{twisted Lefschetz groups}.

\subsection{The component group of the Sato-Tate group}

We have seen that the Mumford--Tate group conjecturally determines the identity component $\ST(A)^0$ of the Sato--Tate group $\ST(A)$ of an abelian variety~$A$ over a number field $K$ (provably so in dimension $g\le 3$).
The identity component $\ST(A)^0$ is a normal finite index subgroup of $\ST(A)$, and we now want to consider the component group $\ST(A)/\ST(A)^0$.
As above, for any field extension $L/K$, we use $A_L$ to denote the base change of $A$ to $L$.

\begin{theorem}\label{thm:component}
Let $A$ be an abelian variety over a number field $K$.
There is a unique finite Galois extension $L/K$ with the property that $\ST(A_L)$ is connected and $\Gal(L/K)\simeq \ST(A)/\ST(A)^0$.
The extension $L/K$ is unramified outside the primes of bad reduction for $A$, and for every subextension $F/K$ of $L/K$ we have $\Gal(L/F)\simeq \ST(A_F)/\ST(A_F)^0$.
\end{theorem}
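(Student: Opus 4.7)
The plan is to exhibit $L$ as the fixed field of the kernel of a canonical surjection $\phi\colon\Gal(\Kbar/K)\twoheadrightarrow\pi_0(\ST(A))$ and then verify each clause in turn. Take $\phi$ to be the composition
\[
\Gal(\Kbar/K)\xrightarrow{\rho_{A,\ell}} G_\ell\hookrightarrow G_\ell^{\rm zar}(\Q_\ell)\twoheadrightarrow \pi_0(G_\ell^{\rm zar}),
\]
where the last arrow is the (automatically continuous) projection onto the finite component group. Because $G_\ell$ is Zariski-dense in $G_\ell^{\rm zar}$ and every proper union of components is Zariski-closed, $\phi$ is surjective. Using the identification $\pi_0(G_\ell^{\rm zar})\simeq\pi_0(G_\ell^{1,\rm zar})\simeq\pi_0(\ST(A))$ recalled in \S\ref{sec:stid}, the fixed field $L\coloneqq\Kbar^{\ker\phi}$ is a finite Galois extension of $K$ with $\Gal(L/K)\simeq\ST(A)/\ST(A)^0$.

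For the connectedness clause, note that $\rho_{A_L,\ell}$ is the restriction of $\rho_{A,\ell}$ to $\Gal(\Kbar/L)=\ker\phi$, so its image is $G_\ell\cap G_\ell^{{\rm zar},0}(\Q_\ell)$, an open subgroup of index $|\pi_0(G_\ell^{\rm zar})|$ in $G_\ell$. Its Zariski closure is a closed subvariety of the irreducible variety $G_\ell^{{\rm zar},0}$, and since finitely many translates of this closure cover $G_\ell^{\rm zar}$, the closure must have full dimension and hence equal $G_\ell^{{\rm zar},0}$. The $\ell$-adic monodromy group of $A_L$ is therefore connected, and so are its symplectic subgroup and maximal compact, giving $\ST(A_L)$ connected. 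Uniqueness follows immediately: any Galois $L'/K$ with $\ST(A_{L'})$ connected has $\rho_{A,\ell}(\Gal(\Kbar/L'))\subseteq G_\ell^{{\rm zar},0}(\Q_\ell)$, so $\Gal(\Kbar/L')\subseteq\ker\phi$ and $L\subseteq L'$; the additional requirement $\Gal(L'/K)\simeq\ST(A)/\ST(A)^0$ then forces $L'=L$ by a degree count. The subextension claim comes from applying the same recipe to $A_F$: the restricted map $\phi|_{\Gal(\Kbar/F)}$ has kernel $\Gal(\Kbar/L)$ and image $\Gal(L/F)$, while the density argument shows the identity component of its Zariski closure is again $G_\ell^{{\rm zar},0}$, so the extension produced for $A_F$ is $L$ itself and $\Gal(L/F)\simeq\pi_0(\ST(A_F))$.

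The main obstacle is the ramification statement. For a good prime $\p$ of residue characteristic different from $\ell$, the N\'eron--Ogg--Shafarevich criterion gives $I_\p\subseteq\ker\rho_{A,\ell}\subseteq\ker\phi$, so $L/K$ is unramified at $\p$. The delicate case is $\p\mid\ell$, where $\rho_{A,\ell}$ is typically ramified and one must argue that the quotient $\phi$ is nonetheless unramified there. The standard workaround is to invoke the (known for $g\le 3$ by \cite{BK15}, conjectural in general) $\ell$-independence of the extension $L$: repeating the construction with an auxiliary prime $\ell'$ coprime to the residue characteristic of $\p$ yields the same field $L$, which is unramified at $\p$ by the first case. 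This reduces the ramification clause to showing that the Frobenius conjugacy class $\phi(\Frob_\q)\in\pi_0(\ST(A))$ at good primes $\q$ is independent of $\ell$, after which Chebotarev pins $L$ down.
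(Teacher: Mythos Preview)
Your construction of $L$ as the fixed field of $\ker\phi$, the surjectivity argument via Zariski density, the connectedness of $\ST(A_L)$ via the dimension count, and the treatment of subextensions all match the paper's approach essentially line for line. The paper phrases things slightly differently (working with the Galois group $\Gamma$ of the maximal extension unramified outside $S_\ell$ and calling $L$ the \emph{minimal} extension with connected Sato--Tate group rather than arguing uniqueness via a degree count), but the content is the same.

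The one genuine gap is in your ramification paragraph. You correctly isolate the problem at primes above $\ell$ and correctly identify the fix as $\ell$-independence of the field $L$, but you then assert that this independence is ``known for $g\le 3$ by \cite{BK15}, conjectural in general'' and leave the proof conditional. This is not the case: Serre proved unconditionally, for abelian varieties of arbitrary dimension, that the component group $G_\ell^{\rm zar}/(G_\ell^{\rm zar})^0$ is independent of $\ell$; the paper cites this as \cite{Se91}. With that result in hand, your argument is complete exactly as you sketched it: rerunning the construction with any auxiliary $\ell'$ yields the same $L$, so $L/K$ is unramified at every good prime. You have conflated the (open) $\ell$-independence of the full group $G_\ell^{\rm zar}$, or of $\ST(A)$, with the (known) $\ell$-independence of its group of connected components; only the latter is needed here. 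Your final sentence about reducing to $\ell$-independence of $\phi(\Frob_\q)$ and applying Chebotarev is in fact roughly how Serre's argument proceeds, so you were close to rediscovering the missing ingredient rather than needing to cite it.
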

\begin{proof}
As explained in \cite[\S 8.3.4]{Se12}, the component groups of $G_\ell^{\rm zar}$ and $\ST(A)$ are isomorphic.
Let $\Gamma$ be the Galois group of the maximal subextension $K_{S_\ell}$ of $\Gal(\Kbar/K)$ that is unramified away from the set $S_\ell$ consisting of the primes of bad reduction for $A$ and the primes of $K$ lying above $\ell$.
The $\ell$-adic Galois representation $\rho_{A,\ell}\colon \Gal(\Kbar/K)\to \Aut(V_\ell)$ induces a continuous surjective homomorphism
\[
\Gamma\to G_\ell^{\rm zar}/(G_\ell^{\rm zar})^0,
\]
whose kernel is a normal open subgroup $\Gamma_0$ of $\Gamma$.
The corresponding fixed field $L$ is a finite Galois extension of $K$, and it is the minimal Galois extension of $K$ for which $\ST(A_L)$ is connected.  It is clearly uniquely determined and unramified outside $S_\ell$, and we have isomorphisms
\[
\Gal(L/K)\simeq \Gamma/\Gamma_0\simeq G_\ell^{\rm zar}/(G_\ell^{\rm zar})^0\simeq \ST(A)/\ST(A)^0.
\]
As shown by Serre \cite{Se91}, the component group of $G_\ell^{\rm zar}$, and therefore of $\ST(A)$, is independent of $\ell$, and the above argument applies to any choice of $\ell$. Thus $L/K$ can be ramified only at primes of bad reduction for~$A$.
For any subextension $F/K$ of $L/K$, replacing~$A$ by $A_F$ in the argument above yields the same field~$L$, with $\Gal(L/F)\simeq \ST(A_F)/\ST(A_F)^0$.
\end{proof}

\subsection{Exercises}

\begin{exercise}\label{ex:gsp}
Let $A$ be an abelian variety of dimension $g$ over a number field $K$.
Show that one can choose a basis for $V_\ell=T_\ell\otimes_\Z\Q$ so that the matrix $M$ describing the action of any $\sigma\in \Gal(\Kbar/K)$ on $V_\ell$ satisfies $M^t\Omega M = \lambda\Omega$ for some $\lambda\in \Q_\ell^\times$, where $\Omega\coloneqq \bigl(\begin{smallmatrix}0&-I\\I&0\end{smallmatrix}\bigr)$.
Conclude that the image of the corresponding Galois representation lies in $\GSp_{2g}(\Q_\ell)$ and describe the map $\Gal(\Kbar/K)\to \Q_\ell^\times$ induced by the similitude character $\lambda$.
\end{exercise}

\begin{exercise}\label{ex:DeligneTorus}
Define the Deligne torus $\SS$ as an $\R$-algebraic group in $\A^4$ (give equations that define it as an affine variety and polynomial maps for the group operations), and then express the $\R$-algebraic groups $\Gm$ and $\U_1$ as subgroups of $\SS$ that intersect in $\mu_2$.
Prove that $\SS(\R)$ and $\C^\times$ are isomorphic as real Lie groups (give explicit maps in both directions).
\end{exercise}

\begin{exercise}\label{ex:WeilRestriction}
Let $L/K$ be a finite separable extension of degree $d$, with $L=K(\alpha)$.
Given an affine $L$-variety $Y$ defined by polynomials $P_k\in L[y_1,\ldots,y_n]$, we can construct an affine $K$-variety $\Res_{L/K}(Y)$ by writing each $y_i=\sum_{j=0}^{d-1} x_{ij}\alpha^j$ in terms of the $K$-basis $\{1,\alpha,\ldots,\alpha^{d-1}\}$ for $L$ and using the minimal polynomial of $\alpha$ to replace each $P_k(y_1,\ldots,y_n)$ by a polynomial in $K[x_{11},\ldots,x_{1d},\ldots,x_{n1}\ldots,x_{nd}]$.
The $K$-variety $\Res_{L/K}(Y)$ is the \emph{Weil restriction} (or \emph{restriction of scalars}) of $Y$.
Prove that the $\R$-algebraic group $\SS$ (the Deligne torus) is the Weil restriction of the $\C$-algebraic group $\Gm$, that is, $\SS = \Res_{\C/\R}(\Gm)$.
\end{exercise}

\section{Sato--Tate axioms and Galois endomorphism types}\label{lec:STaxioms}

In this section we present the Sato-Tate axioms and consider the problem of classifying Sato-Tate groups of abelian varieties of a given dimension $g$.  We then compute trace moment sequences of all connected Sato-Tate groups of abelian varieties of dimension $g\le 3$ and present formulas for the trace moment sequence of $\USp(2g)$ (the generic case) that apply to all $g$,

\subsection{Sato--Tate axioms}

In \cite[\S 8.2]{Se12} Serre gives a set of axioms that any Sato--Tate group is expected to satisfy.
Serre considers Sato--Tate groups in a more general context than we do here, so we will state the axioms as they apply to Sato--Tate groups of abelian varieties.
As in \S\ref{sec:stid}, for a Lie group $G$ we define a \emph{Hodge circle} to be a subgroup $H$ of $G$ that is the image of a continuous homomorphism $\theta\colon \U(1)\to G^0$ whose elements $\theta(u)$ have eigenvalues $u$ and $u^{-1}$ with multiplicity $g$ (note that $H$ necessarily lies in the identity component $G^0$ of $G$).

\begin{definition}\label{def:STaxioms}
A group $G$ satisfies the \emph{Sato--Tate axioms} (for abelian varieties of dimension $g\ge 1$) if and only if the following hold:
\begin{enumerate}
\setlength{\itemsep}{2pt}
\item[(ST1)] (Lie condition) $G$ is a closed subgroup of $\USp(2g)$.
\item[(ST2)] (Hodge condition) The Hodge circles in $G$ generate a dense non-trivial subgroup of $G^0$.\footnote{The statement of (ST2) in \cite{FKRS12} inadvertently omits the requirement that the Hodge circles generate a dense subgroup.}
\item[(ST3)] (rationality condition) For each component $H$ of $G$ and irreducible character $\chi$ of $\GL_{2g}(\C)$, we have $\int_H\chi\mu\in \Z$, where $\mu$ is the Haar measure on $G$ normalized so that $\mu(\ifunc_H)=1$.
\end{enumerate}
\end{definition}

\begin{remark}\label{rem:STaxgen}
Definition \ref{def:STaxioms} generalizes easily to self-dual motives with rational coefficients.
Given an integer weight $w\ge 0$ and Hodge numbers $h^{p,q}\in \Z_{\ge 0}$ indexed by $p,q\in\Z_{\ge 0}$ with $p+q=w$ such that $h^{p,q}=h^{q,p}$ when $w$ is odd, let $d\coloneqq\sum h^{p,q}$.
For abelian varieties we have $w=1$ and $h^{1,0}=h^{0,1}=g$.
In axiom (ST1) we require $G$ to be a closed subgroup of $\USp(d)$ (resp. $\O(d)$) when $w$ is odd (resp. even), and in axiom (ST2) we require elements $\theta(u)$ of a Hodge circle to have eigenvalues $u^{p-q}$ with multiplicity $h^{p,q}$; axiom (ST3) is unchanged.
\end{remark}

Axiom (ST1) implies that $G$ is a compact Lie group, and (ST2) rules out finite groups, since~$G$ must contain at least one Hodge circle and therefore contains a subgroup isomorphic to $\U(1)$.
When~$G$ is connected, (ST3) holds automatically and only (ST1) and (ST2) need to be checked; this is an easy application of representation theory, see \cite[Prop.\ 2]{KS09}.
Axiom (ST3) plays no role when $g=1$ (see the proof of Proposition~\ref{prop:staxg1} below), but for $g>1$ it is crucial.
When $g=2$, for example, for every integer $n\ge 1$ we can diagonally embed $\U(1)\times \U(1)[n]$ in $\USp(4)$ to get infinitely many non-conjugate closed groups $G\subseteq \USp(4)$ whose identity component is a Hodge circle.
All of these groups satisfy (ST1) and (ST2),  but only finitely many satisfy (ST3).
Indeed, if we take $\chi$ and let $C$ be a component on which the projection to $\U(1)[n]$ has order $n$, we have
\[
\int_C\chi\mu=\zeta_n+\bar\zeta_n\in \Z
\]
only for $n\in \{2,3,4,6\}$.
More generally, we have the following theorem.

\begin{theorem}\label{thm:staxfinite}
Up to conjugacy, for any fixed dimension $g\ge 1$ the number of subgroups of $\USp(2g)$ that satisfy the Sato--Tate axioms is finite.
\end{theorem}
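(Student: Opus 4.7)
The plan is to split the theorem into two independent finiteness claims. Any $G$ satisfying (ST1) is a compact Lie subgroup of $\USp(2g)$, and sits inside $N\coloneqq N_{\USp(2g)}(G^0)$ with $G/G^0$ a finite subgroup of $N/G^0$. I will show: (a) under (ST1) and (ST2), the identity component $G^0$ lies in one of finitely many $\USp(2g)$-conjugacy classes; (b) for each fixed such $G^0$, axiom (ST3) forces the component group $G/G^0$ to have bounded exponent, and a compact Lie group contains only finitely many finite subgroups of any bounded exponent.

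For (a), the Hodge condition (ST2) is a strong Lie-algebraic constraint. A Hodge circle $\theta\colon \U(1)\to G^0$ is determined by its tangent vector $X\coloneqq \theta'(0)\in\mathfrak{usp}(2g)$, which is a \emph{Hodge element}: an element acting on the standard representation with eigenvalues $\pm i$ each of multiplicity $g$, equivalently satisfying $X^2=-I$. The Hodge elements form a single $\USp(2g)$-adjoint orbit $\mathcal{O}\subset\mathfrak{usp}(2g)$, and (ST2) rephrased says that the Lie algebra $\mathfrak{g}^0$ of $G^0$ equals the Lie subalgebra of $\mathfrak{usp}(2g)$ generated by the nonempty set $\mathfrak{g}^0\cap\mathcal{O}$. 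After conjugation I may assume $\mathfrak{g}^0$ contains a fixed $X_0\in\mathcal{O}$, so that $\mathfrak{g}^0$ is a reductive subalgebra of $\mathfrak{usp}(2g)$ of bounded dimension which contains the fixed semisimple element $X_0$ and is generated by Hodge elements. I would then invoke the rigidity statement that reductive Lie subalgebras of a fixed compact Lie algebra containing a prescribed semisimple element fall into finitely many conjugacy classes---a consequence of the Dynkin--Onishchik classification of reductive subalgebras, together with the bound $\dim\mathfrak{g}^0\le g(2g+1)$.

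For (b), fix $G^0$. Any admissible $G$ satisfies $G^0\subseteq G\subseteq N$, with $G/G^0$ a finite subgroup of $N/G^0$. A priori $N/G^0$ may admit infinitely many finite subgroups (e.g., cyclic subgroups of its identity torus, as in the $\U(1)\times \U(1)[n]$ example preceding the theorem). Axiom (ST3) cuts this to finitely many: for each component $H=gG^0$ and irreducible character $\chi$ of $\GL_{2g}(\C)$, the integral $\int_H\chi\,\mu$ can be expanded via the $G^0$-isotypic decomposition of the $\chi$-representation as a $\Z[\zeta]$-linear combination of character values of $g$ (modulo $G^0$) on a maximal torus of $N/G^0$. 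Demanding that all such sums lie in $\Z$, for $\chi$ ranging over (say) symmetric and exterior powers of the standard representation, is a cyclotomic constraint analogous to the elementary fact that $\zeta_n+\bar\zeta_n\in\Z$ only for $n\in\{1,2,3,4,6\}$, and it forces the orders of roots of unity occurring as eigenvalues of $g$ to be uniformly bounded. Hence the exponent of $G/G^0$ is bounded, and the compact Lie group $N/G^0$ contains only finitely many finite subgroups of any bounded exponent.

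The main obstacle is the cyclotomic argument in (b): from integrality of $\int_H\chi\,\mu$ over a well-chosen family of characters one must extract a bound depending only on $g$ on the orders of all relevant roots of unity, and this requires choosing enough $\chi$ to separate characters of a maximal torus of $N/G^0$ while controlling the coefficients in the isotypic expansion. A clean route is to work inside the maximal torus of $N/G^0$ directly and exploit that symmetric and exterior powers of the standard representation generate its character lattice. The argument for (a), by contrast, is largely structural and rests on standard finiteness theorems for reductive subalgebras of a compact Lie algebra once the Hodge condition is translated into the containment of a prescribed semisimple element.
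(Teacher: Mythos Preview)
The paper does not actually prove this theorem; it only cites \cite[Rem.\ 3.3]{FKRS12}. Your two-step strategy---bound the identity component $G^0$ using (ST2), then bound the component group using (ST3)---is the natural approach and is the one taken there. Step~(b) is correct in outline: the integrality of $\int_H\chi\,\mu$ for enough characters (exterior powers of the standard representation suffice) forces the eigenvalues of a coset representative to be roots of unity of order bounded in terms of $g$, and then Jordan's theorem converts bounded exponent into bounded order, after which rigidity of homomorphisms from a fixed finite group into a compact Lie group gives finitely many conjugacy classes.

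Step (a), however, has a genuine gap. The ``rigidity statement'' you invoke---that reductive subalgebras of $\mathfrak{usp}(2g)$ containing a prescribed semisimple element $X_0$ fall into finitely many conjugacy classes---is false for $g\ge 3$. The centralizer of a Hodge element $X_0$ is $\mathfrak u(g)$, and for $g\ge 3$ the subalgebras $\langle X_0\rangle\oplus\mathfrak t$ with $\mathfrak t$ a one-dimensional torus in $\mathfrak{su}(g)$ give a continuous family of non-conjugate reductive subalgebras, all containing $X_0$. What saves the argument is the part of (ST2) you record but then drop: $\mathfrak g^0$ is \emph{generated} by its Hodge elements, not merely contains one. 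Since the cocharacters of the Hodge circles then span the cocharacter lattice of a maximal torus of $G^0$, and every weight of the standard $2g$-dimensional representation pairs to $\pm 1$ with each of them, the weights themselves are bounded in terms of $g$. Hence only finitely many weight multisets can occur, so for each of the finitely many abstract isomorphism types of $G^0$ (bounded dimension) there are only finitely many symplectic embeddings up to conjugacy. You should replace the appeal to a nonexistent rigidity theorem with this weight-bounding argument.
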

\begin{proof}
See \cite[Rem. 3.3]{FKRS12}
\end{proof}

Theorem~\ref{thm:staxfinite} motivates the following \emph{classification problem}: given an integer $g\ge 1$, determine the subgroups of $\USp(2g)$ that satisfy the Sato--Tate axioms.
The case $g=1$ is easy.

\begin{proposition}\label{prop:staxg1}
For $g=1$ the three groups $\U(1)$, $N(\U(1)$ and $\SU(2)$ listed in Theorem~\ref{thm:STg1} are the only groups that satisfy the Sato--Tate axioms (up to conjugacy).
\end{proposition}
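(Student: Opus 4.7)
My plan is to combine axiom (ST1) with the classification of closed subgroups of $\SU(2)$, use (ST2) to force the identity component $G^0$ to be nontrivial, and then use normalizer considerations to enumerate the possible component groups; axiom (ST3) will play no eliminative role, but I will still need to verify it at the end.

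First I would invoke (ST1) to put $G$ inside $\USp(2) = \SU(2)$ and recall that the connected closed subgroups of $\SU(2)$ are, up to conjugation, the trivial group, the standard maximal torus $\U(1) = \{\smallmat{u}{0}{0}{\bar u}\}$, and $\SU(2)$ itself.  This is immediate from the classification of Lie subalgebras of $\mathfrak{su}(2)$, whose nonzero subalgebras are either one-dimensional (all $\SU(2)$-conjugate) or the whole algebra.  For $g=1$ a Hodge circle is the image of a continuous homomorphism $\theta\colon \U(1)\to G^0$ whose values have eigenvalues $u,u^{-1}$, i.e.\ an $\SU(2)$-conjugate of the standard torus sitting inside $G^0$.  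Axiom (ST2) forces $G^0$ to contain such a circle, so $G^0\neq\{1\}$, and after conjugating $G$ in $\SU(2)$ I may assume $G^0\in\{\U(1),\SU(2)\}$.

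Next I would split into the two remaining cases.  If $G^0=\SU(2)$, then the containment $G\subseteq \SU(2)$ forces $G=\SU(2)$.  If $G^0=\U(1)$, then normality of $G^0$ in $G$ gives $G\subseteq N_{\SU(2)}(\U(1)) = N(\U(1))$, and since $N(\U(1))/\U(1)\cong \Z/2\Z$ the only options are $G\in\{\U(1),N(\U(1))\}$.  This produces exactly the three candidates claimed.

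Finally I would verify (ST3).  For connected $G$ it is automatic, since $\int_G\chi\,\mu$ equals the multiplicity of the trivial representation in the restriction of $\chi$ to $G$, a nonnegative integer by Peter--Weyl.  For $G=N(\U(1))$ the only new component is $C = \smallmat{0}{-1}{1}{0}\cdot \U(1)$, consisting of trace-zero matrices that are all $\SU(2)$-conjugate (with eigenvalues $\pm i$), so integrating any irreducible character $\chi$ of $\GL_2(\C)$ against normalized Haar measure on $C$ collapses to evaluating $\chi\bigl(\smallmat{0}{-1}{1}{0}\bigr)$, which is an integer (for the symmetric-power characters it lies in $\{-1,0,1\}$).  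The main potential obstacle is just the bookkeeping of (ST3); no substantive new ideas beyond the subgroup structure of $\SU(2)$ are required, in agreement with the remark preceding the proposition that (ST3) imposes no additional constraint when $g=1$.
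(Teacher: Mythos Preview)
Your argument is correct and follows essentially the same route as the paper's proof: use (ST2) to rule out trivial $G^0$, classify the connected closed subgroups of $\SU(2)$, and pass to the normalizer when $G^0=\U(1)$. The only difference is that you additionally verify (ST3) for the three candidates, whereas the paper's proof establishes only the elimination direction and relies on the preceding remark that (ST3) is automatic here.
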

\begin{proof}
Suppose $G$ satisfies the Sato--Tate axioms.
Then $G^0$ contains a conjugate of $\U(1)$ embedded in $\USp(2)$ via $u\mapsto\smallmat{u}{0}{0}{\bar u}$, as in Theorem~\ref{thm:STg1}, and it must be a compact connected Lie group.
The only nontrivial compact connected Lie groups in $\USp(2)=\SU(2)$ are $\U(1)$ and $\SU(2)$ itself (this follows from the classification of compact connected Lie groups but is easy to see directly).
Thus either $G^0=\SU(2)$, in which case $G=\SU(2)$, or $G^0$ is conjugate to $\U(1)$ and must be a normal subgroup of $G$ (the identity component of a compact Lie group is always a normal subgroup of finite index).
The group $\U(1)$ has index~2 in its normalizer, so $\U(1)$ and $N(\U(1))$ are the only possibilities for $G$ when $G^0=\U(1)$.
\end{proof}

\begin{corollary}\label{cor:staxg1}
For $g=1$ a group $G$ satisfies the Sato--Tate axioms if and only if it is the Sato--Tate group of an elliptic curve over a number field.
\end{corollary}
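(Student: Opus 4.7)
The plan is to reduce both implications to Proposition~\ref{prop:staxg1} combined with Theorem~\ref{thm:STg1}, which together assert that ``the set of groups satisfying the ST axioms'' and ``the set of Sato--Tate groups of elliptic curves over number fields'' are each contained in the three-element set $\{\U(1),\, N(\U(1)),\, \SU(2)\}$ (up to conjugacy in $\SU(2)$). It therefore suffices to check (a)~that every group on this list actually satisfies axioms (ST1)--(ST3), and (b)~that every group on this list is in fact realized as $\ST(E)$ for some elliptic curve $E/K$.

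For direction ($\Leftarrow$), given $G=\ST(E)$, Theorem~\ref{thm:STg1} identifies $G$ as one of the three groups. I would then verify the axioms for each. Axiom (ST1) is immediate from the construction of $\ST(E)$ as a compact subgroup of $\USp(2)=\SU(2)$. For (ST2), the subgroup $\U(1)\hookrightarrow\SU(2)$ via $u\mapsto\smallmat{u}{0}{0}{\bar u}$ is visibly a Hodge circle (elements have eigenvalues $u,u^{-1}$ with $g=1$), and it coincides with $\U(1)$, with $N(\U(1))^0$, and with a maximal torus of $\SU(2)$ whose conjugates generate $\SU(2)$ densely, handling all three cases. For (ST3), the connected cases $G\in\{\U(1),\SU(2)\}$ are automatic (cf.\ the remark following Definition~\ref{def:STaxioms}, via \cite[Prop.\ 2]{KS09}). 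The only nontrivial check is the non-identity component $H$ of $N(\U(1))$; every irreducible representation of $\GL_2(\C)$ restricts on $\SU(2)$ to a symmetric-power representation $\Sym^n$ (the determinant acts trivially), and a brief calculation shows that every element of $H$ has eigenvalues $\pm i$, so $\tr\Sym^n$ is constant on $H$ and equal to $0$ for $n$ odd and to $(-1)^{n/2}$ for $n$ even; in particular $\int_H\chi\,\mu\in\Z$.

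For direction ($\Rightarrow$), given $G$ satisfying the axioms, Proposition~\ref{prop:staxg1} identifies $G$ (up to conjugacy) with one of $\U(1)$, $N(\U(1))$, $\SU(2)$, and I would exhibit a concrete elliptic curve realizing each: a non-CM curve such as $E\colon y^2=x^3+x+1$ over $\Q$ gives $\ST(E)=\SU(2)$; the CM curve $E\colon y^2=x^3+1$ over $\Q$ (whose CM field $\Q(\sqrt{-3})$ is not contained in~$\Q$) gives $\ST(E)=N(\U(1))$; and its base change to $K=\Q(\sqrt{-3})$ gives $\ST(E_K)=\U(1)$. Each assignment follows from Theorem~\ref{thm:STg1}.

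There is no real obstacle here beyond the bookkeeping in the axiom check on the non-identity component of $N(\U(1))$; that check is the only place where axiom (ST3) has nontrivial content in dimension~$g=1$, and once it is dispatched the corollary is a direct synthesis of Proposition~\ref{prop:staxg1} and Theorem~\ref{thm:STg1}.
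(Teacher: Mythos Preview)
Your proposal is correct and follows the natural route: combine Proposition~\ref{prop:staxg1} with Theorem~\ref{thm:STg1} and then close the loop by checking that each of the three groups both satisfies (ST1)--(ST3) and is actually realized by some elliptic curve. The paper states the corollary without proof, treating it as an immediate consequence of those two results; your write-up makes explicit the two small verifications the paper leaves to the reader (the (ST3) check on the non-identity component of $N(\U(1))$, and the exhibition of explicit curves realizing each case), and your computations there are correct.
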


The classification problem for $g=2$ is more difficult, but it has been solved.

\begin{theorem}\label{thm:staxg2}
Up to conjugacy in $\USp(4)$ there are $55$ groups that satisfy the Sato--Tate axioms for $g=2$.
Of these $55$, the following $6$ are connected:
\[
\U(1)_2,\qquad \SU(2)_2,\qquad \U(1)\times\U(1), \qquad \U(1)\times \SU(2),\qquad \SU(2)\times \SU(2),\qquad \USp(4),
\]
were $\U(1)_2$ denotes $\U(1)=\bigl\{\smallmat{u}{0}{0}{\bar u}:u\in \C^\times\bigr\}$ diagonally embedded in $\USp(4)$, and similarly for $\SU(2)_2$.
\end{theorem}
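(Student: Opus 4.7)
The plan is to prove Theorem 3.14 in two stages: first classify the possible identity components $G^0 \subseteq \USp(4)$, then classify the admissible component groups $G/G^0$ on top of each, checking the rationality axiom (ST3) at every stage and reducing modulo conjugacy in $\USp(4)$.

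For the first stage, I would start from the observation that by (ST1), $G^0$ is a compact connected Lie subgroup of $\USp(4)$ and by (ST2) it contains a Hodge circle, i.e.\ a copy of $\U(1)$ embedded so that each element has eigenvalues $u,u,u^{-1},u^{-1}$. Compact connected Lie groups of rank at most $2$ are classified, and the faithful symplectic representations of dimension $4$ of each candidate (tori $\U(1)$ and $\U(1)^2$, the simple groups $\SU(2)$ and $\SU(2)\times\SU(2)$, $\Sp(4)=\USp(4)$, and quotients by finite central subgroups) are well understood. Imposing the Hodge condition, which forces the restriction of the standard representation to some $\U(1)\subseteq G^0$ to have eigenvalue multiset $\{u,u,u^{-1},u^{-1}\}$, rules out the embeddings that do not have $1,0,-1$ as their weights with multiplicities matching this pattern. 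Running through the short list one obtains exactly the six connected candidates $\U(1)_2$, $\SU(2)_2$, $\U(1)\times\U(1)$, $\U(1)\times\SU(2)$, $\SU(2)\times\SU(2)$, and $\USp(4)$. For each, axiom (ST3) is automatic since the group is connected.

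For the second stage, for each of the six identity components $G^0$ I would compute its normalizer $N:=N_{\USp(4)}(G^0)$, and observe that any $G$ with the given $G^0$ must lie in $N$, hence its component group $G/G^0$ is a finite subgroup of the (typically finite, or at worst finite after quotienting) group $N/G^0$. I would then enumerate the conjugacy classes of such finite subgroups $\Gamma\subseteq N/G^0$ and, for each lift $G\subseteq N$, check the rationality axiom (ST3) component by component: for every coset $C$ of $G^0$ in $G$ and every irreducible character $\chi$ of $\GL_4(\C)$ (it suffices to check a generating set such as the characters of exterior/symmetric powers of the standard representation, which span the ring of virtual characters), verify that $\int_C \chi\,\mu \in \Z$. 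This is the content of the $\mathrm{U}(1)[n]$ example in the text: only the orders $n\in\{1,2,3,4,6\}$ survive for the twist of $\U(1)\times\U(1)$ by a cyclic factor, and analogous constraints cut down the component groups arising above each $G^0$. Finally, I would eliminate repetitions by computing $\USp(4)$-conjugacy invariants (component group as abstract group, trace moment sequences on each component, the distribution of characteristic polynomials, etc.) until the surviving list matches the known total of $55$.

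The main obstacle will be stage two: it is a substantial bookkeeping problem to enumerate the finite subgroups of each component-group extension, to verify (ST3) for each candidate, and—most delicately—to decide which candidates are conjugate in $\USp(4)$ rather than just abstractly isomorphic. The case $G^0=\U(1)\times\U(1)$ is the richest, since $N/G^0$ is an extension of $\sym{2}$ by a torus-normalizer involving the Weyl group, and the $\zeta_n+\bar\zeta_n\in\Z$ style calculation must be carried out for every conjugacy class of finite subgroup; this is where nearly all of the $55-6=49$ disconnected groups come from. The remaining identity components contribute only a handful of extensions each, and for $G^0=\USp(4)$ we get only $G=\USp(4)$ itself since $N=G^0$. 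I would expect to carry this out by exploiting the embedding $\USp(4)\hookrightarrow \mathrm{SO}(5)$ and the structure of the Weyl group $W(C_2)$ to organize the enumeration, and to cross-check the final list against the explicit groups tabulated in Fit\'e--Kedlaya--Rotger--Sutherland.
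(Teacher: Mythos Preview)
The paper does not actually prove this theorem; its proof is a one-line citation to \cite[Thm.\ 3.4]{FKRS12}.  Your two-stage outline is essentially the strategy carried out in that reference, so in spirit you are aligned with the intended argument.

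There is, however, a concrete factual error in your stage-two analysis that would derail the bookkeeping.  You assert that the case $G^0=\U(1)\times\U(1)$ is the richest and accounts for ``nearly all of the $55-6=49$ disconnected groups.''  In fact the breakdown by identity component in \cite{FKRS12} is
\[
\U(1)_2:\ 32,\quad \SU(2)_2:\ 10,\quad \U(1)\times\U(1):\ 8,\quad \U(1)\times\SU(2):\ 2,\quad \SU(2)\times\SU(2):\ 2,\quad \USp(4):\ 1,
\]
so it is $G^0=\U(1)_2$ that dominates.  The underlying reason is structural: for $G^0=\U(1)_2$ the normalizer $N=N_{\USp(4)}(G^0)$ is \emph{not} a finite extension of $G^0$; the quotient $N/G^0$ has a positive-dimensional piece isomorphic to $\SU(2)/\{\pm 1\}\simeq\SO(3)$ (together with an extra order-$2$ factor), so classifying the admissible $G$ requires enumerating finite subgroups of $\SU(2)$ up to conjugacy (cyclic, binary dihedral, binary tetrahedral, binary octahedral, binary icosahedral) and then applying (ST3), which kills the icosahedral case and restricts the cyclic and dihedral orders to $\{1,2,3,4,6\}$.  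Your parenthetical ``at worst finite after quotienting'' does not capture this; the continuous part of $N/G^0$ is precisely where the bulk of the work lies, and the $\zeta_n+\bar\zeta_n\in\Z$ calculation you mention is only the cyclic sliver of it.

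By contrast, for $G^0=\U(1)\times\U(1)$ the normalizer quotient $N/G^0$ genuinely is finite (the dihedral group of order $8$), and the eight groups there are obtained directly from its subgroup lattice.  If you reorganize stage two around the correct hierarchy---with the $\SO(3)$-subgroup classification driving the $\U(1)_2$ case---your plan matches \cite{FKRS12} and will go through.
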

\begin{proof}
See \cite[Thm.\ 3.4]{FKRS12}, which gives an explicit description of the 55 groups.
\end{proof}

\begin{remark}
Those familiar with the classification of connected compact Lie groups may notice that the group $\U(2)$, which can be embedded in $\USp(4)$, is missing from Theorem~\ref{thm:staxg2}.
This is because it fails to satisfy the Hodge condition (ST2); it contains subgroups isomorphic to $\U(1)$, but there is no way to embed $\U(1)\hookrightarrow \U(2)\hookrightarrow\USp(4)$ and get eigenvalues $u$ and $u^{-1}$ with multiplicity $2$; see \cite[Rem. 2.3]{FKS16}.
However, for motives of weight $3$ and Hodge numbers $h^{3,0}=h^{2,1}=h^{1,2}=h^{0,3}=1$ the modified Hodge condition noted in Remark~\ref{rem:STaxgen} is satisfied by a subgroup of $\USp(4)$ isomorphic to $\U(2)$; see \cite{FKS16} for details, including two examples of weight 3 motives with Sato-Tate group $\U(2)$.
\end{remark}

Corollary~\ref{cor:staxg1} does not hold for $g=2$.

\begin{theorem}\label{thm:stg2}
Of the $55$ groups appearing in Theorem~\ref{thm:staxg2}, only $52$ arise as the Sato--Tate group of an abelian surface over a number field.  Of these, $34$ arise for abelian surfaces defined over $\Q$.
\end{theorem}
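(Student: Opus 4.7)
The plan is to translate the claim into a classification of \emph{Galois endomorphism types}. Since the Mumford--Tate conjecture is known for abelian surfaces (see \cite{MZ99} and \cite{BK15}), the identity component $\ST(A)^0$ is pinned down by the Hodge group, and therefore by the real endomorphism algebra $E\coloneqq \End(A_{\Kbar})\otimes_\Z\R$. Combining this with Theorem~\ref{thm:component}, the component group $\ST(A)/\ST(A)^0\simeq \Gal(L/K)$ acts on $E$ through its action on $\End(A_{\Kbar})_\Q$, and the pair (isomorphism class of $E$, image of $\Gal(L/K)$ in $\Aut(E)$) determines $\ST(A)$ up to conjugacy in $\USp(4)$. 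So each realizable Sato--Tate group corresponds to such a pair, and I need to determine which of the 55 axiom-satisfying groups from Theorem~\ref{thm:staxg2} actually arise this way.

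For the positive direction (52 over some number field, and 34 over $\Q$) the approach is to exhibit an explicit abelian surface realizing each type. The six connected cases are handled directly: $\USp(4)$ by a ``generic'' Jacobian of a genus $2$ curve, $\SU(2)\times\SU(2)$ by a product of non-isogenous non-CM elliptic curves, $\U(1)\times\SU(2)$ by a product with one CM factor, $\U(1)\times\U(1)$ by a product of two non-isogenous CM curves, and $\SU(2)_2$ and $\U(1)_2$ by the squares (or appropriate QM/RM surfaces). For the non-connected types one typically starts with a surface $A_0$ over a large number field $L$ with the prescribed identity component, then produces a $K$-form $A/K$ (via twisting, Weil restriction, or direct construction) such that $\Gal(L/K)$ acts on $\End(A_{0,L})$ in the specified way. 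A uniform source of witnesses is Jacobians of explicitly chosen genus $2$ curves, which suffice to realize all 52 types over number fields and all 34 that can be realized over $\Q$.

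The main obstacle, and in my view the hard part, is non-realizability of the three excluded types. Here I would argue by contradiction: suppose $A/K$ realizes one of the forbidden Sato--Tate groups $G$, so that $\Gal(L/K)$ embeds into $\Aut(E)$ as a specified subgroup $H$. By Skolem--Noether, the action of $H$ on $E$ is inner modulo $\Aut(Z(E))$ acting on the center; this imposes strong compatibility between $\#H$, the structure of the center $Z(E)$, and the $K$-rational endomorphism subalgebra $\End(A_K)_\Q = E^{\Gal(L/K)}$. In the three excluded cases this compatibility fails: either $H$ is too large for any Galois-equivariant action to be inner on $E$, or $E^H$ contains an element whose $K$-rationality would force $\Hg(A)$ to be strictly smaller than what the identity component demands, contradicting the Mumford--Tate description of $\ST(A)^0$. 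One may also need to rule out subtler obstructions coming from the Weil pairing (which forces any automorphism of $\Lambda_\Q$ induced by $\Gal(L/K)$ to preserve the symplectic form up to a similitude character, and hence restricts $H$).

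For the sharper count of 34 over $\Q$, an additional global obstruction appears: the field $L$ must be Galois over $\Q$ with $\Gal(L/\Q)$ containing $\Gal(L/K)$ as a distinguished subgroup realizing the required action on $E$. A finite Galois-theoretic case analysis on $\Gal(L/\Q)$, combined with the explicit realizations constructed in the positive direction, singles out exactly 34 types; the remaining 18 types are ruled out because the prescribed $\Gal(L/K)$-action on $E$ cannot be extended to an action of a Galois group over $\Q$ consistent with an abelian surface defined over $\Q$.
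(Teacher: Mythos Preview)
Your overall strategy matches the paper's (and \cite{FKRS12}'s) approach: translate to Galois endomorphism types, use the known Mumford--Tate conjecture in dimension~2 to pin down $\ST(A)^0$ from $\End(A_{\Kbar})_\R$, identify the component group with $\Gal(L/K)$, and then realize the 52 types by explicit genus-2 Jacobians. The paper itself simply cites \cite[Thm.~1.5]{FKRS12} for the proof, and the surrounding discussion confirms this is exactly the route taken there.

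Where your sketch drifts from the actual argument is in the non-realizability step. The three excluded groups all have identity component $\U(1)\times\U(1)$, so $E\simeq \C\times\C$ is \emph{commutative}; Skolem--Noether therefore gives you nothing (inner automorphisms are trivial), and $\Aut_\R(\C\times\C)$ is the full dihedral group of order~8, so there is no abstract obstruction to the action of $H$ on $E$. The genuine obstruction is arithmetic: an abelian surface with $\End(A_{\Kbar})_\R\simeq\C\times\C$ is either a product of two CM elliptic curves or a simple CM surface with a quartic CM field $F$, and in either case the Galois action on the endomorphism algebra factors through automorphisms of the CM field(s). This forces complex conjugation to act in a specific way (central in the reflex picture), which rules out exactly the three component-group actions in question. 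Your ``$E^H$ too large'' and Weil-pairing heuristics do not capture this; the argument in \cite{FKRS12} is a direct case analysis on the CM structure. Similarly, the count of 34 over $\Q$ is obtained by a concrete enumeration of which Galois types admit $\Q$-models, not by an abstract extension-of-action criterion.
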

\begin{proof}
See \cite[Thm.\ 1.5]{FKRS12}.
\end{proof}

The three subgroups of $\USp(4)$ that satisfy the Sato--Tate axioms but are not the Sato--Tate group of any abelian surface over a number field are the normalizer of $\U(1)\times\U(1)$ in $\USp(4)$, whose component group is the dihedral group of order 8, and two of its subgroups, one of index 2 and one of index 4.
The proof that these three groups do not occur is obtained by first establishing a bijection between \emph{Galois endomorphism types} (see Definition~\ref{def:galtype} below) and Sato--Tate groups, and then showing that there are only 52 Galois endomorphism types of abelian surfaces.
Explicit examples of genus 2 curves whose Jacobians realize these 52 possibilities can be found in \cite[Table 11]{FKRS12}, and animated histograms of their Sato--Tate distributions are available at
\begin{center}
\url{http://math.mit.edu/~drew/g2SatoTateDistributions.html}
\end{center}

The classification problem for $g=3$ remains open, but the connected cases have been determined (see Table~\ref{tab:st0g3} in the next section).
Before leaving our discussion of the Sato--Tate axioms, it is reasonable to ask whether Sato--Tate groups necessarily satisfy them.
Of course we expect this to be the case, but it is difficult to prove in general.
However, it can be proved to hold in all cases where the Mumford--Tate conjecture is known, including all cases with $g\le 3$.

\begin{proposition}
Let $A$ be an abelian variety of dimension $g$ over a number field $K$ for which the Mumford--Tate conjecture holds.
Then $\ST(A)$ satisfies the Sato--Tate axioms.
\end{proposition}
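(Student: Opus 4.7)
The plan is to verify the three axioms (ST1), (ST2), (ST3) in turn, with the Mumford--Tate conjecture entering crucially at (ST2) and indirectly at (ST3).

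Axiom (ST1) is immediate from the construction: $\ST(A)$ is defined as a maximal compact subgroup of $G_{\ell,\iota}^{1,\rm zar}(\C) \subseteq \Sp_{2g}(\C)$, and every maximal compact subgroup of $\Sp_{2g}(\C)$ is conjugate to $\USp(2g)$, so $\ST(A)$ is a closed subgroup of $\USp(2g)$.

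For axiom (ST2), the Mumford--Tate conjecture gives $(G_{\ell,\iota}^{1,\rm zar})^0(\C) = \Hg(A)(\C)$, so $\ST(A)^0$ is a maximal compact subgroup of $\Hg(A)(\C)$. The morphism $h\colon \SS \to \MT(A)$ restricts to $\U_1 \to \Hg(A)$, and $h(\U(1))$, after a suitable conjugation, lies in $\ST(A)^0$; its eigenvalues $u, u^{-1}$ each have multiplicity $g$ by polarizability, so it is a Hodge circle. The plan is to show that the closed subgroup $H$ of $\ST(A)^0$ generated by all Hodge circles coincides with $\ST(A)^0$. This uses (i) that $\Hg(A)$ is, by definition, the $\Q$-Zariski closure of $h(\U_1)$, combined with standard Hodge-theoretic constructions producing enough additional Hodge morphisms $\U_1 \to \Hg(A)$ (by conjugation inside $\Hg(A)(\C)$ and by tensor manipulations) to ensure that Hodge circles generate $\Hg(A)$ as an algebraic group, and (ii) the general fact that a maximal compact subgroup of a connected reductive complex Lie group is Zariski-dense, and a Zariski-dense compact subgroup of such a group must itself be a maximal compact; together these force $H = \ST(A)^0$.

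Axiom (ST3) can be established via the coset-trace identity: for each component $H_\sigma = g_0\,\ST(A)^0$ of $\ST(A)$ (indexed by $\sigma \in \Gal(L/K) \cong \ST(A)/\ST(A)^0$ via Theorem~\ref{thm:component}) and any representation $\rho$ of $\GL_{2g}(\C)$ with character $\chi$,
\[
\int_{H_\sigma} \chi \, d\mu = \tr\bigl(\rho(g_0)\big|_{V_\rho^{\ST(A)^0}}\bigr),
\]
valid because $\ST(A)^0$ acts trivially on its own invariant subspace, so $\rho(g_0)$ preserves the invariants and depends only on the coset. By Mumford--Tate, $V_\rho^{\ST(A)^0}$ descends to a $\Q$-subspace of the corresponding Schur functor on $H_1(A_\C,\Q)$, and the induced action on this invariant subspace factors through the finite quotient $\Gal(L/K)$. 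Taking $g_0 = \bar M_\p$ for a prime $\p$ with $\Frob_\p|_L = \sigma$ (supplied by Chebotarev), the right side equals $N(\p)^{-k/2}$ times the trace of $M_\p$ on a Galois-stable integral sublattice of the invariants (where $k$ is the weight of $\rho$), which is an integer. Since the integral is a real number independent of $\p$, varying $\p$ over infinitely many primes of differing norm in the Chebotarev class forces the value to be zero when $k$ is odd and an integer when $k$ is even.

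The main obstacle is axiom (ST2): passing from the algebraic fact that Hodge morphisms generate $\Hg(A)$ as a $\Q$-algebraic group to the topological density of Hodge circles in the compact group $\ST(A)^0$. When $\Hg(A)$ has non-trivial torus quotients, a single Hodge circle $h(\U(1))$ is not dense in the maximal compact of those quotients, and additional Hodge circles (beyond $\ST(A)^0$-conjugates of $h(\U(1))$) must be exhibited. Axiom (ST1) is a formality, and once Mumford--Tate supplies a rational structure on the invariants, axiom (ST3) reduces to integrality of Frobenius traces on Galois-stable lattices together with Chebotarev.
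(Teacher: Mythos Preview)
The paper's own proof is simply a citation to \cite[Prop.~3.2]{FKRS12}, so there is no argument in the paper to compare against directly. Your outline has the right shape, and your treatment of (ST1) is fine, but both (ST2) and (ST3) contain genuine gaps beyond what you flag.

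For (ST2), you correctly isolate the difficulty: when $\Hg(A)$ has a nontrivial torus quotient (e.g.\ the CM case), the single Hodge circle $h(\U(1))$ and its $\ST(A)^0$-conjugates cannot be dense in $\ST(A)^0$, because the image in the compact torus is a closed one-parameter subgroup. You say ``additional Hodge circles must be exhibited,'' but you do not indicate where they come from. The actual argument (in \cite{FKRS12}) uses that \emph{every} cocharacter $\U(1)\to\ST(A)^0$ with the prescribed eigenvalue multiplicities is a Hodge circle, and one must show that the full collection of such cocharacters spans the cocharacter lattice of the torus part; this uses the defining minimality of $\Hg(A)$ over $\Q$ together with an analysis of which cocharacters satisfy the eigenvalue condition. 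Your phrase ``standard Hodge-theoretic constructions\ldots by tensor manipulations'' does not capture this.

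For (ST3), your coset-integral identity $\int_{H_\sigma}\chi\,d\mu=\tr\bigl(\rho(g_0)|_{V_\rho^{\ST(A)^0}}\bigr)$ is correct, and the idea of plugging in $g_0=\bar M_\p$ is good. But the assertion that ``the trace of $M_\p$ on a Galois-stable integral sublattice of the invariants'' lies in $\Z$ is the crux, and it is not justified. The only lattice available from the Galois side is the $\Z_\ell$-lattice $(\mathbb{S}_\lambda T_\ell)\cap V_{\rho,\ell}^{(G_\ell^{1,\rm zar})^0}$, which yields trace in $\Z_\ell$, not $\Z$. Your varying-$\p$ trick then shows only that $c\in\iota(\Q_\ell)$ for the chosen embedding $\iota$, which together with $c$ being a real sum of roots of unity does not force $c\in\Q$. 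What is actually needed is a $\Q$-structure on the Galois action on the invariants: one must know that the representation of $\Gal(L/K)$ on $V_\rho^{\Hg(A)}$ (the Hodge classes) is defined over $\Q$. For abelian varieties this follows from Deligne's theorem that Hodge cycles are absolutely Hodge \cite{Del82}, which you do not invoke; without it, or without the Algebraic Sato--Tate conjecture, there is no reason the $\ell$-adic action on the $\Q_\ell$-invariants should preserve the $\Q$-form coming from Betti cohomology.
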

\begin{proof}
See \cite[Prop.\ 3.2]{FKRS12}.
\end{proof}

\subsection{Galois endomorphism types}


We will work in the abstract category $\mathcal C$ whose objects are pairs $(G,E)$ of a finite group $G$ and an $\R$-algebra $E$ equipped with an $\R$-linear action of $G$, and whose morphisms $\Phi\colon (G,E)\to (G',E')$ are pairs $(\phi_G,\phi_E)$, where $\phi_G\colon G\to G'$ is a morphism of groups, and $\phi_E\colon E\to E'$ is an equivariant morphism of $\R$-algebras, meaning that
\begin{equation}\label{eq:equiv}
\phi_E(e^g)=\phi_E(e)^{\phi_G(g)}\qquad\text{for all }g\in G \text{ and } e\in E.
\end{equation}
To each abelian variety $A/K$ we now associate an isomorphism class $[G,E]$ in $\mathcal C$ as follows.
The minimal extension $L/K$ for which $\End(A_L)=\End(A_{\Kbar})$ is a finite Galois extension of $K$; we shall take $G$ to be $\Gal(L/K)$ and $E$ to be the real endomorphism algebra $\End(A_L)_\R\coloneqq \End(A_L)\otimes_\Z\R$.
The Galois group $\Gal(L/K)$ acts on $\End(A_L)$ via its action on the coefficients of the rational maps defining each element of $\End(A_K)$; this induces an $\R$-linear action of $\Gal(L/K)$ on $\End(A_L)_\R$ via composition with the natural map $\End(A_L)\to \End(A_L)_\R$.
The pair $(\Gal(L/K),\End(A_L)_\R)$ is thus an object of $\mathcal C$.

\begin{definition}\label{def:galtype}
The \emph{Galois endomorphism type} $\GT(A)$ of an abelian variety $A/K$ is the isomorphism class of the pair $(\Gal(L/K),\, \End(A_L)_\R)$ in the category $\mathcal C$, where $L$ is the minimal extension of $K$ for which $\End(A_L)=\End(A_{\Kbar})$.
\end{definition}

\begin{example}\label{ex:g1end}
Let $E$ be an elliptic curve over a number field $K$.
If $E$ does not have CM, or if it has CM defined over $K$, then its endomorphisms are all defined over $L=K$; otherwise, its endomorphisms are all defined over its CM field $L$, an imaginary quadratic extension of $K$.
The real endomorphism algebra $\End(E_L)_\R$ is isomorphic to $\R$ when $E$ does not have CM, and isomorphic to $\C$ when $E$ does have CM.
We therefore have
\[
\GT(E)=\begin{cases}
[\cyc{1},\C]&\text{if $E$ has CM defined over $K$}\\
[\cyc{2},\C]&\text{if $E$ has CM not defined over $K$}\\
[\cyc{1},\R]&\text{if $E$ does not have CM}
\end{cases}
\]
Here $\cyc{n}$ denotes the cyclic group of order $n$; in the case $[\cyc{2},\C]$ the action of $\cyc{2}$ on $\C$ corresponds to complex conjugation.
\end{example}

The three Galois endomorphism types listed in Example~\ref{ex:g1end} correspond to the three Sato-Tate groups listed in Theorem~\ref{thm:STg1}.
Under this correspondence, the real endomorphism algebra $\End(E_L)_\R$ determines the identity component $\ST(E)^0$ (up to conjugacy), and the Galois group $\Gal(L/K)$ is isomorphic to the component group $\ST(E)/\ST(E)^0$.
Moreover, the field~$L$ is precisely the field $L$ given by Theorem~\ref{thm:component}.

\begin{theorem}\label{thm:gt}
Let $A$ be an abelian variety $A$ of dimension $g\le 3$ defined over a number field $K$ and let~$L$ be the minimal field for which $\End(A_L)=\End(A_{\Kbar})$.
The conjugacy class of the Sato-Tate group $\ST(A)$ determines the Galois endomorphism type $\GT(A)$; moreover, the conjugacy class of the identity component $\ST(A)^0$ determines the isomorphism class of $\End(A_L)_\R$ and $\ST(A)/\ST(A)^0\simeq \Gal(L/K)$.
For $g\le 2$ the converse holds: the Galois endomorphism type $\GT(A)$ determines the Sato--Tate group $\ST(A)$ up to conjugacy.
\end{theorem}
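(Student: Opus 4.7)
The plan is to treat the two halves of Theorem~\ref{thm:gt} separately. For the forward direction (valid for all $g\le 3$), I would combine Theorem~\ref{thm:component}, which already supplies the component group together with the field $L$, with the commutant description of $\End(A_L)_\R$ via the Hodge group, exploiting that the Mumford--Tate conjecture is known in dimension $g\le 3$. For the converse (claimed only for $g\le 2$), I would simply match the finite classifications of Sato--Tate groups (Proposition~\ref{prop:staxg1} and Theorem~\ref{thm:stg2}) against the corresponding enumerations of Galois endomorphism types.

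First I would verify that the field $L$ produced by Theorem~\ref{thm:component} coincides with the minimal extension of $K$ over which all endomorphisms of $A_{\Kbar}$ are defined. Standard Galois descent for abelian varieties shows that the action of $\Gal(\Kbar/K)$ on $\End(A_{\Kbar})$ factors through a finite quotient, whose fixed field $L'$ is the minimal field of definition of endomorphisms. On the $\ell$-adic side, every endomorphism commutes with $\rho_{A,\ell}(\Gal(\Kbar/L'))$, so the latter lies in a connected algebraic subgroup of $G_\ell^{1,\rm zar}$; equivalently, $\ST(A_{L'})$ is connected. Combining these two minimality statements gives $L=L'$ and packages the extracted data into the isomorphism $\Gal(L/K)\simeq \ST(A)/\ST(A)^0$ already asserted in Theorem~\ref{thm:component}.

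Next I would recover $\End(A_L)_\R$ from $\ST(A)^0$. After base change to $L$, the Mumford--Tate conjecture in dimension $g\le 3$ identifies the complexification of $\ST(A)^0$ with $\Hg(A_L)(\C)$ up to conjugacy in $\Sp_{2g}(\C)$, and the isomorphism $\End(A_\C)_\Q\simeq \End(\Lambda_\Q)^{\Hg(A_L)}$ recalled in Section~\ref{sec:stid} then recovers $\End(A_L)_\R$ as an $\R$-algebra by taking commutants and descending along the rational structure. The remaining ingredient of $\GT(A)$ is the action of $\Gal(L/K)$ on $\End(A_L)_\R$; I would match this against the natural conjugation action of $\ST(A)/\ST(A)^0$ on the commutant of $\ST(A)^0$ inside $M_{2g}(\C)$. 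Both actions arise by conjugation inside $\rho_{A,\ell}(\Gal(\Kbar/K))$, so their compatibility follows from unwinding the definitions, but this equivariance check is the most delicate step and is where I would expect the real technical work to concentrate.

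For the converse when $g\le 2$ the argument reduces to a finite inspection. When $g=1$, the three Sato--Tate groups of Proposition~\ref{prop:staxg1} correspond transparently to the three Galois endomorphism types listed in Example~\ref{ex:g1end}. When $g=2$ I would invoke Theorem~\ref{thm:stg2} together with the parallel enumeration of $52$ Galois endomorphism types of abelian surfaces in \cite{FKRS12}, and check the bijection one identity component at a time; the list is short enough that no abstract argument is required. The main obstacle to extending the converse, and the reason it breaks down in dimension three, is that once $g=3$ one can exhibit non-conjugate Sato--Tate groups whose identity components and component-group actions on the endomorphism algebra coincide, so that distinguishing them requires invariants beyond $\GT(A)$, such as higher trace moments.
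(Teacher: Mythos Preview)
The paper's own proof is a bare citation to \cite[Prop.\ 2.19, Thm.\ 1.4]{FKRS12}, so there is no internal argument to compare against; your outline is broadly the right shape, but two points deserve correction.

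First, your identification of the two fields $L$ and $L'$ has a gap.  You argue that since every geometric endomorphism is defined over $L'$, the image $\rho_{A,\ell}(\Gal(\Kbar/L'))$ lies in the commutant of $\End(A_{\Kbar})_{\Q_\ell}$ inside $\Sp_{2g}$, and you then assert that this commutant is connected, hence $\ST(A_{L'})$ is connected.  But the commutant of a semisimple subalgebra need not be connected, and even when it is, a Zariski-closed subgroup of a connected group can be disconnected.  What you actually need is that this commutant \emph{equals} the identity component $(G_\ell^{1,\rm zar})^0$; this is precisely the statement that the Hodge group coincides with the Lefschetz group, which is part of the input from \cite{MZ99,BK15} for $g\le 3$ and does not follow from your argument as written.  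The paper flags exactly this subtlety in the Remark following the theorem: only the inclusion ``$\ST(A_L)$ connected $\Rightarrow \End(A_L)=\End(A_{\Kbar})$'' is automatic, and the reverse requires the Mumford--Tate input.

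Second, your closing remark is not right.  You assert that the converse ``breaks down'' at $g=3$ because one can exhibit non-conjugate Sato--Tate groups with identical Galois endomorphism type.  The paper says the opposite: the converse is \emph{expected} to hold for $g\le 3$ and is merely unproven there for lack of a complete classification; the first genuine failure is at $g=4$, via Mumford's example with $\End(A_{\Kbar})=\Z$ but $\ST(A)\subsetneq\USp(8)$.  So your explanation of why the theorem stops at $g\le 2$ misidentifies the obstruction.
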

\begin{proof}
See Proposition 2.19 and Theorem 1.4 in \cite{FKRS12}.
\end{proof}

It is expected that in fact the Sato--Tate group always determines the Galois endomorphism type, and that the converse holds for $g\le 3$.
For $g=3$ we at least know that the real endomorphism algebra $\End(A_L)_\R$ determines the identity component $\ST(A)^0$ and that $\Gal(L/K)\simeq \ST(A)/\ST(A)^0$.
At first glance it might seem that this should determine $\ST(A)$, but it does not, even when $g=2$.
One needs to also understand how $\Gal(L/K)$ acts on $\End(A_L)_\R$ and relate this to the Sato-Tate group $\ST(A)$.
In \cite{FKRS12} this is accomplished for $g=2$ by looking at the lattice of $\R$-subalgebras of $\End(A_L)_\R$ fixed by subgroups of $\Gal(L/K)$ and showing that this is enough to uniquely determine $\ST(A)$; see \cite[Thm.\ 4.3]{FKRS12}.
To apply the same approach when $g=3$ we need a more detailed classification of the possible Galois endomorphism types and Sato--Tate groups for $g=3$ than is currently available.

For $g=4$ the Galois endomorphism type does not always determine the Sato--Tate group.
This is due to an exceptional counterexample constructed by Mumford in \cite{Mu69}, in which he proves the existence of an abelian four-fold $A$ for which $\End(A_{\Kbar})=\Z$ but $\MT(A)\ne \GSp_8$.
The fact that $\MT(A)$ is properly contained in $\GSp_8$ implies that $\ST(A)$ must be properly contained in $\USp(8)$ (this does not depend on the Mumford--Tate conjecture, here we are only using the inclusion proved by Deligne).
On the other hand, for an abelian variety of general type one has $\End(A_{\Kbar})=\Z$ and $\ST(A)=\USp(2g)$; see \cite{Ha11, Za00} for an explicit criterion that applies to almost all Jacobians of hyperelliptic curves.

For $g>4$ one can construct exceptional examples as a product of an abelian variety with one of Mumford's exceptional four-folds, so in general the Galois endomorphism type cannot determine the Sato--Tate group for any $g\ge 4$.
However, such examples will not be simple and will have $\End(A)\ne\Z$.
In \cite{Se86b} Serre proves an analog of his open image theorem for elliptic curves that applies to abelian varieties of dimension $g=2,6$ and $g$ odd.
For these values of $g$, if $\End(A_{\Kbar})=\Z$ then $\ST(A)=\USp(2g)$ and no direct analog of Mumford's construction exists.

\begin{remark}
For $g\le 3$, the field $L$ in Theorem~\ref{thm:component} (the minimal $L$ for which $\ST(A_L)$ is connected) is the same as the field $L$ in Theorem~\ref{thm:gt} (the minimal $L$ for which $\End(A_L)=\End(A_{\Kbar})$).
In any case, the former always contains the latter: if $\ST(A_L)$ is connected then we necessarily have $\End(A_{\Kbar})=\End(A_L)$.
This can be seen as a consequence of Bogomolov's theorem \cite{Bo80}, which states that $G_\ell$ is open in $G_\ell^{\rm zar}(\Q_\ell)$,  and Faltings` theorem \cite{Falt83} that $\End(A)_{\Q_\ell}\simeq  \End(V_\ell(A))^{G_\ell}$.
If $\ST(A)$ (and therefore $G_\ell^{\rm zar}$) is connected, then $\End(A)$ is invariant under base change (now apply this to $A=A_L$).
\end{remark}

Tables~\ref{tab:st0g2} and~\ref{tab:st0g3} below list the real endomorphism algebras and corresponding identity components of Sato-Tate groups that arise in dimensions $g=2,3$.
A complete list of the 52 Galois endomorphism types and corresponding Sato-Tate groups for $g=2$ can be found in \cite[Thm.\ 4.3]{FKRS12} and \cite[Table 9]{FKRS12}. 

\clearpage
\begin{table}
\setlength{\extrarowheight}{1.7pt}
\begin{tabular}{|l|l|l|}\hline
\textbf{geometric type of abelian surface} & ${\End(A_{\Kbar})_\R}$ & ${\ST(A)^0}$\\\hline
square of CM elliptic curve & ${\rm M}_2(\C)$ & $\U(1)_2$\\\hline
{QM abelian surface} & ${\rm M}_2(\R)$ & $\SU(2)_2$\\
square of non-CM elliptic curve &  & \\\hline
{CM abelian surface} & $\C\times\C$& $\U(1)\times\U(1)$\\
product of CM elliptic curves & &\\\hline
product of CM and non-CM elliptic curves & $\C\times\R$ & $\U(1)\times\SU(2)$\\\hline
{RM abelian surface} & $\R\times\R$ & $\SU(2)\times\SU(2)$\\
product of non-CM elliptic curves & & \\\hline
{abelian surface of general type} & $\R$ & $\USp(4)$\\\hline
\end{tabular}
\bigskip

\caption{Real endomorphism algebras and Sato--Tate identity components for abelian surfaces}\label{tab:st0g2}
\bigskip
\medskip
\end{table}
\vspace{-40pt}

\begin{table}
\setlength{\extrarowheight}{1.7pt}
\begin{tabular}{|l|l|l|}\hline
\textbf{geometric type of abelian three-fold} & ${\End(A_K)_\R}$ & ${\ST(A)^0}$\\\hline
cube of a CM EC & ${\rm M}_3(\C)$ & $\U(1)_3$\\\hline
cube of a non-CM EC & ${\rm M}_3(\R)$ & $\SU(2)_3$\\\hline
product of CM EC and square of CM EC & $\C\times{\rm M}_2(\C)$ & $\U(1)\times\U(1)_2$\\\hline
product of CM EC and QM abelian surface & $\C\times{\rm M}_2(\R)$ & $\U(1)\times\SU(2)_2$\\
product of CM EC and square of non-CM EC & &\\\hline
product of non-CM EC and square of CM EC & $\R\times{\rm M}_2(\C)$ & $\SU(2)\times\U(1)_2$\\\hline
product of non-CM EC and QM abelian surface & $\R\times{\rm M}_2(\R)$ & $\SU(2)\times\SU(2)_2$\\
product of non-CM EC and square of non-CM EC & &\\\hline
{CM abelian threefold} & $\C\times\C\times\C$ & $\U(1)\times\U(1)\times\U(1)$\\
product of CM EC and CM abelian surface & &\\
product of three CM ECs & &\\\hline
product of non-CM EC and CM abelian surface & $\C\times\C\times\R$ & $\U(1)\times\U(1)\times\SU(2)$\\
product of non-CM EC and two CM ECs & &\\\hline
product of CM EC and RM abelian surface & $\C\times\R\times\R$&$\U(1)\times\SU(2)\times\SU(2)$\\
product of CM EC and two non-CM ECs & &\\\hline
{RM abelian threefold} & $\R\times\R\times\R$ & $\SU(2)\times\SU(2)\times\SU(2)$\\
product of non-CM EC and RM abelian surface & &\\
product of 3 non-CM ECs & &\\\hline
product of CM EC and abelian surface & $\C\times\R$ & $\U(1)\times\USp(4)$\\\hline
product of non-CM EC and abelian surface & $\R\times\R$ & $\SU(2)\times\USp(4)$\\\hline
{quadratic CM abelian threefold} & $\C$ & $\U(3)$\\\hline
{generic abelian threefold} & $\R$ & $\USp(6)$\\\hline
\end{tabular}
\bigskip

\caption{Real endomorphism algebras and Sato--Tate identity components for abelian threefolds}\label{tab:st0g3}
\end{table}
\clearpage

As can be seen in the two tables above, the Sato--Tate group is in some respects a rather coarse invariant; for example, it cannot distinguish a product of non-CM elliptic curves from a geometrically simple abelian surface with real multiplication (RM).
On the other hand, the Haar measures of the 52 Sato--Tate groups of abelian surfaces over number fields all give rise to distinct distributions of characteristic polynomials, which, under the Sato--Tate conjecture, match the distribution of normalized $L$-polynomials, and there are some rather fine distinctions among these distributions that the Sato--Tate group detects.
For example, there are only 37 distinct trace distributions among the 52 groups, one needs to look at both the linear and quadratic coefficients of the characteristic polynomials in order to distinguish them.

It is possible for two non-conjugate Sato--Tate groups to be isomorphic as abstract groups yet give rise to distinct trace distributions.
For example, the connected Sato-Tate groups $\SU(2)\times \U(1)_2$ and $\U(1)\times \SU(2)_2$ that appear in Table~\ref{tab:st0g3} are both abstractly isomorphic to the real Lie group $\U(1)\times \SU(2)$, but these two embeddings of $\U(1)\times \SU(2)$  in $\USp(6)$ have different trace distributions.

As shown by the example below, this phenomenon can also occur for disconnected Sato-Tate groups with the same identity component.

\begin{example}
Consider the hyperelliptic curves
\begin{align*}
C_1\colon y^2&=x^6 + 3x^5 + 15x^4 - 20x^3 + 60x^2 - 60x + 28,\\
C_2\colon y^2&=x^6 + 6x^5 - 15x^4 + 20x^3 - 15x^2 + 6x - 1,
\end{align*}
and let $A_1\coloneqq \Jac(C_1)$ and $A_2\coloneqq\Jac(C_2)$ denote their Jacobians.
Over $\Qbar$ both $A_1$ and $A_2$ are isogenous to the square of the elliptic curve $y^2=x^3+1$, which has CM by $\Q(\sqrt{-3})$.
We necessarily have $\ST(A_1)^0=\ST(A_2)^0=\U(1)_2$, and the component groups are both isomorphic to the dihedral group of order 12.
However, their Sato--Tate groups are different: in terms of the labels used in \cite{FKRS12}, we have $\ST(A_1)=D_{6,1}$, while $\ST(A_2)=D_{6,2}$ (see \cite[\S 3.4]{FKRS12} for explicit descriptions of these groups in terms of generators), and their normalized trace distributions are quite different.
For $C_1$ the density of zero traces is $3/4$, whereas for $C_2$ it is $7/12$ (these ratios represent the proportion of Sato--Tate group components on which the trace is identically zero), and their normalized trace moment sequences are $(1,0,1,0,9,0,110,0,1505,0,21546,\ldots)$ and $(1,0,2,0,18,0,200,0,2450,0,31752,\ldots)$, respectively.
The Sato-Tate conjecture for these two curves was proved in \cite{FS14}, so this difference in Sato-Tate groups provably impacts the normalized trace distributions of $A_1$ and $A_2$.
\end{example}

\subsection{Sato--Tate measures}

Once we know the Sato--Tate group $\ST(A)$ of an abelian variety $A$, we are in a position to compute various statistic related to the distribution of its conjugacy classes, such as the moments of characteristic polynomial coefficients (or any other conjugacy class invariant).
We can then test the Sato--Tate conjecture by comparing these to corresponding statistics obtained by computing normalized $L$-polynomials $\bar L_\p(T)$ for all primes $\p$ of good reduction for $A$ up to some norm bound $B$.

The first step is to determine the Haar measure on $\ST(A)^0$.
For $g=1$ there are only two possibilities: either $\ST(A)^0=\U(1)$ or $\ST(A)^0=\SU(2)$, where, as usual we embed $\U(1)$ in $\SU(2)$ via $u\mapsto \smallmat{u}{0}{0}{\bar u}$.
In terms of the eigenangle $\theta$, the pushforward measure on $\conj(\ST(A)^0)$ is one of
\begin{align*}
\mu_{\U(1)} &\coloneqq \tfrac{1}{\pi}d\theta,\\
\mu_{\SU(2)} &\coloneqq \tfrac{2}{\pi}\sin^2\theta\,d\theta,
\end{align*}
with $0\le \theta\le \pi$.
This also addresses two of the possibilities for $\ST(A)^0$ that arise when $g=2$, the groups $\U(1)_2$ and $\SU(1)_2$ listed in the first two rows of Table~\ref{tab:st0g2}; these denote two identical copies of $\U(1)$ and $\SU(2)$ diagonally embedded in $\USp(4)$.
When expressed in terms of the eigenangle $\theta$, the measure $\mu_{\U(1)_2}$ is exactly the same as $\mu_{\U(1)}$ (and similarly for $\mu_{\SU(2)_2}$), but note that we will get a different distribution on characteristic polynomials (which now have degree 4 rather than degree 2), because each eigenvalue now occurs with multiplicity $2$; in particular, the trace becomes $4\cos\theta$ rather than $2\cos\theta$.

For the groups $\ST(A)^0$ that appear in the next three rows of Table~\ref{tab:st0g2}, the measure on $\conj(\ST(A)^0)$ is a product of measures that we already know:
\begin{align*}
\mu_{\U(1)\times\U(1)} &\coloneqq \tfrac{1}{\pi^2}d\theta_1\, d\theta_2,\\
\mu_{\U(1)\times\SU(2)} &\coloneqq \tfrac{2}{\pi^2}\sin^2\theta_2\, d\theta_1 \,d\theta_2,\\
\mu_{\SU(2)\times\SU(2)} &\coloneqq \tfrac{4}{\pi^2}\sin^2\theta_1\sin^2\theta_2\, d\theta_1 \,d\theta_2.
\end{align*}
To obtain the measure for the generic case $\ST(A)=\ST(A)^0=\USp(4)$, we use the Weyl integration formula for $\USp(2g)$ (which includes the case $\USp(2)=\SU(2)$ that we already know):
\begin{equation}\label{haarUSp}
\mu_{\USp(2g)}\coloneqq\frac{1}{g!}\left(\prod_{1\le j<k\le g}\left(2\cos\theta_j-2\cos\theta_k\right)^2\right)\prod_{1\le j\le g}\left(\tfrac{2}{\pi}\sin^2\theta_j\, d\theta_j\right),
\end{equation}
with $0\le \theta_j\le \pi$, see \cite[Thm.\ 7.8B]{Weyl46} or \cite[\S 5.0.4]{KaSa99}.

This covers all the Sato-Tate groups listed in Table~\ref{tab:st0g2} for $g=2$.  By taking appropriate products of measures we know and applying the Weyl integration formula with $g=3$, we obtain all the $g=3$ cases listed in Table~\ref{tab:st0g3} except for $\U(3)$, where we need the Weyl integration formula for $\U(g)$:
\begin{equation}\label{haarU}
\mu_{\U(g)}\coloneqq \frac{1}{g!}\left(\prod_{1\le j<k\le g}\left|e^{i\theta_j}-e^{i\theta_k}\right|\right)\prod_{1\le j\le g} \tfrac{1}{2\pi}d\theta_j,
\end{equation}
with $0\le \theta_j\le 2\pi$ (note the $2\pi$); see \cite[Thm.\ 7.4B]{Weyl46} or \cite[\S 5.0.3]{KaSa99}.

With the measure $\mu_{\ST(A)^0}$ in hand, for any continuous class function $f$ on $\ST(A)$, we can compute
\[
\mu_{\ST(A)}(f)\coloneqq \int_{\ST(A)} f(x)\mu_{\ST(A)}(x) = \sum_x \int_{\ST(A)^0} f(xy)\mu_{\ST(A)^0}(y),
\]
as a finite sum over left coset representatives $x\ST(A)^0$ of $\ST(A)/\ST(A)^0$; see \cite[\S 5.1.1]{FKRS12} for details and explicit results in the case $g=2$.

\subsection{Trace moment sequences}

Having determined Haar measures for various Sato--Tate groups $\ST(A)$, let us now consider the problem of computing the trace moment sequence of a connected Sato--Tate group; so assume $\ST(A)=\ST(A)^0$.
For each integer $n\ge 0$ we wish to compute the $n$th moment
\[
\Exp_{\ST(A)}[\tr^n] = \int_0^\pi\cdots\int_0^\pi \left(\sum_{j=1}^g 2\cos\theta_j\right)^n\mu_{\ST(A)}(\theta_1,\ldots,\theta_g).
\]

We have already done this computation for the groups $\U(1)$ and $\SU(2)$ that arise in dimension $g=1$.  For $\U(1)$ we have
\[
\Exp_{\U(1)}[\tr^n] = \frac{1}{\pi}\int_0^\pi(2\cos\theta)^n\,d\theta = b_n \coloneqq \binom{n}{\nicefrac{n}{2}},
\]
where we adopt the convention that $\binom{n}{n/2}=0$ when $n$ is odd, and for $\SU(2)$ we have
\[
\Exp_{\SU(2)}[\tr^n] = \frac{2}{\pi}\int_0^\pi(2\cos\theta)^n\sin^2\theta\,d\theta = c_n \coloneqq \frac{2}{n+2}\binom{n}{\nicefrac{n}{2}}.
\]
We thus obtain the moment sequences
\begin{align*}
\Mom_{\U(1)}[\tr] &= (1,\, 0,\, 2,\, 0,\, 6,\, 0,\, 20,\, 0,\, 70,\, 0,\, 252, \ldots),\\
\Mom_{\SU(2)}[\tr] &= (1,\, 0,\, 1,\, 0,\, 2,\, 0,\, 5,\, 0,\, 14,\, 0,\, 42, \ldots).
\end{align*}

For $g=2$, observe that for 5 of the 6 connected Sato--Tate groups listed in Table~\ref{tab:st0g2} we can compute their trace moment sequences directly from the trace moment sequences for $\U(1)$ and $\SU(2)$; no integration is required.  For $\U(1)_2$ and $\SU(2)_2$ we simply have
\begin{align*}
E_{\U(1)_2}[\tr^n] &= E_{\U(1)}[2^n\tr^n] = 2^nb_n,\\
E_{\SU(2)_2}[\tr^n] &= E_{\SU(2)}[2^n\tr^n] = 2^nc_n,
\end{align*}
and for $\U(1)\times\U(1)$, $\U(1)\times\SU(2)$, $\SU(2)\times\SU(2)$ we take binomial convolutions to obtain\footnote{It is at this point we see the utility of starting our moment sequences at $\Mom_0$.}
\begin{align}
E_{\U(1)\times\U(1)}[\tr^n] &= \sum_{r=0}^n \binom{n}{r}E_{\U(1)}[\tr^r]E_{\U(1)}[\tr^{n-r}]=\sum_{r=0}^n \binom{n}{r}b_rb_{n-r}=b_n^2,\label{eq:u1u1}\\
E_{\U(1)\times\SU(2)}[\tr^n] &= \sum_{r=0}^n \binom{n}{r}E_{\U(1)}[\tr^r]E_{\SU(2)}[\tr^{n-r}]=\sum_{r=0}^n \binom{n}{r}b_rc_{n-r}=\tfrac{1}{2}c_nb_{n+2},\label{eq:u1su2}\\
E_{\SU(2)\times\SU(2)}[\tr^n] &= \sum_{r=0}^n \binom{n}{r}E_{\SU(2)}[\tr^r]E_{\SU(2)}[\tr^{n-r}]=\sum_{r=0}^n \binom{n}{r}c_rc_{n-r}=c_nc_{n+2}.\label{eq:su2su2}
\end{align}
For the generic case $\USp(4)$ we apply \eqref{haarUSp} with $g=2$ to obtain
\[
E_{\USp(4)}[\tr^n] = \tfrac{2^{n+3}}{\pi^2}\int_0^\pi\int_0^\pi(\cos\theta_1+\cos\theta_2)^n(\cos\theta_1-\cos\theta_2)^2\sin^2\theta_1\sin^2\theta_2\, d\theta_1 d\theta_2 = c_nc_{n+4} - c_{n+2}^2.
\]
Here we have applied the general determinantal formula from \cite[Thm.\ 1]{KS09} that allows one to compute the moment generating function of the $k$th eigenvalue power-sum in $\USp(2g)$.
Recall that the \emph{moment generating function} of a moment sequence $(m_0,m_1,m_2,\ldots)$ is the exponential generating function
\[
\mathcal M(z)\coloneqq \sum_{n=0}^\infty m_n\frac{z^n}{n!}.
\]
One uses exponential generating functions so that products of moment generating functions correspond to binomial convolutions of moment sequences; this means that if $\mathcal M_1(z)$ and $\mathcal M_2(z)$ are the moment generating functions of two independent random variable $X_1$ and $X_2$, then the moment generating function of $X_1+X_2$ is simply $\mathcal M_1(z)\mathcal M_2(z)$.

The determinantal formula for the first eigenvalue power-sum (the trace) is simply
\[
\mathcal M_{\USp(2g)}[\tr] = \det_{g\times g}\left (\mathcal C^{i+j-2}\right)_{ij},
\]
where $\mathcal C^m$ is the moment generating function defined by
\[
\mathcal C^m(z) \coloneqq \sum_{r=0}^m\binom{n}{r}\left(\mathcal B_{2r-n}-\mathcal B_{2r-n+2}\right),\qquad \mathcal B_s(z)\coloneqq \sum_{n=0}^\infty \frac{z^{2n+s}}{s!(n+s)!}.
\]
The function $\mathcal B_s(z)$ is related to a hyperbolic Bessel function of the first kind; see \cite[p. 13]{KS09} for details.

For the connected Sato--Tate groups that arise in dimension $g=2$ we obtain the moment sequences
\begin{align*}
\Mom_{\U(1)_2}[\tr] &= (1,\, 0,\, 8,\, 0,\, 96,\, 0,\, 1280,\, 0,\, 17920,\, 0,\, 258048,\, \ldots),\\
\Mom_{\SU(2)_2}[\tr] &= (1,\, 0,\, 4,\, 0,\, 32,\, 0,\, 320,\, 0,\, 3584,\, 0,\, 43008,\, \ldots),\\
\Mom_{\U(1)\times\U(1)}[\tr] &= (1,\, 0,\, 4,\, 0,\, 36,\, 0,\, 400,\, 0,\, 4900,\, 0,\, 63504,\, \ldots),\\
\Mom_{\U(1)\times\SU(2)}[\tr] &= (1,\, 0,\, 3,\, 0,\, 20,\, 0,\, 175,\, 0,\, 1764,\, 0,\, 19404,\, \ldots),\\
\Mom_{\SU(2)\times\SU(2)}[\tr] &= (1,\, 0,\, 2,\, 0,\, 10,\, 0,\, 70,\, 0,\, 588,\, 0,\, 5544,\, \ldots),\\
\Mom_{\USp(4)}[\tr] &= (1,\, 0,\, 1,\, 0,\, 3,\, 0,\, 14,\, 0,\, 84,\, 0,\, 594,\, \ldots),
\end{align*}
and for $g=3$ we have
\begin{align*}
\Mom_{\U(1)_3}[\tr] &= (1,\, 0,\, 18,\, 0,\, 486,\, 0,\, 14580,\, 0,\, 459270,\, 0,\, 14880348,\, \ldots),\\
\Mom_{\SU(2)_3}[\tr] &= (1,\, 0,\, 9,\, 0,\, 162,\, 0,\, 3645,\, 0,\, 91854,\, 0,\, 2480058,\, \ldots),\\
\Mom_{\U(1)\times\U(1)_2}[\tr] &= (1,\, 0,\, 10,\, 0,\, 198,\, 0,\, 4900,\, 0,\, 134470,\, 0,\, 3912300,\, \ldots),\\
\Mom_{\U(1)\times\SU(2)_2}[\tr] &= (1,\, 0,\, 6,\, 0,\, 86,\, 0,\, 1660,\, 0,\, 37254,\, 0,\, 916020,\, \ldots),\\
\Mom_{\SU(2)\times\U(1)_2}[\tr] &= (1,\, 0,\, 9,\, 0,\, 146,\, 0,\, 2965,\, 0,\, 68334,\, 0,\, 1707930,\, \ldots),\\
\Mom_{\SU(2)\times\SU(2)_2}[\tr] &= (1,\, 0,\, 5,\, 0,\, 58,\, 0,\, 925,\, 0,\, 17598,\, 0,\, 374850,\,\, \ldots),\\
\Mom_{\U(1)\times\U(1)\times\U(1)}[\tr] &= (1,\, 0,\, 6,\, 0,\, 90,\, 0,\, 1860,\, 0,\, 44730,\, 0,\, 1172556,\,\, \ldots),\\
\Mom_{\U(1)\times\U(1)\times\SU(2)}[\tr] &= (1,\, 0,\, 5,\, 0,\, 62,\, 0,\, 1065,\, 0,\, 21714,\, 0,\, 492366,\,\, \ldots),\\
\Mom_{\U(1)\times\SU(2)\times\SU(2)}[\tr] &= (1,\, 0,\, 4,\, 0,\, 40,\, 0,\, 570,\, 0,\, 9898,\, 0,\, 19521,\,\, \ldots),\\
\Mom_{\SU(2)\times\SU(2)\times\SU(2)}[\tr] &= (1,\, 0,\, 3,\, 0,\, 24,\, 0,\, 285,\, 0,\, 4242,\, 0,\, 73206,\,\, \ldots),\\
\Mom_{\U(1)\times\USp(4)}[\tr] &= (1,\, 0,\, 3,\, 0,\, 21,\, 0,\, 214,\, 0,\, 2758,\, 0,\, 41796,\,\, \ldots),\\
\Mom_{\SU(2)\times\USp(4)}[\tr] &= (1,\, 0,\, 2,\, 0,\, 11,\, 0,\, 94,\, 0,\, 1050,\, 0,\, 14076,\,\, \ldots),\\
\Mom_{\U(3)}[\tr] &= (1,\, 0,\, 2,\, 0,\, 12,\, 0,\, 120,\, 0,\, 1610,\, 0,\, 25956,\,\, \ldots),\\
\Mom_{\USp(6)}[\tr] &= (1,\, 0,\, 1,\, 0,\, 3,\, 0,\, 15,\, 0,\, 104,\, 0,\, 909,\, \ldots),
\end{align*}

Recall that for $g=1$ the trace moment sequence $(1,0,1,0,2,0,5,0,14,0,42,\ldots)$ of the generic Sato--Tate group $\SU(2)$ corresponds to the sequence of Catalan numbers with 0's inserted at the odd moments.
There is a standard combinatorial interpretation of this sequence: the $n$th moment counts the number of returning walks of length $n$ on a 1-dimensional integer lattice that stay to the right of the origin (there are no such walks when $n$ is odd, hence the odd moments are zero).

This combinatorial interpretation generalizes to higher genus.
For $g=2$ the trace moment sequence for the generic Sato--Tate group $\USp(4)$ counts returning walks on a $2$-dimensional integer lattice that satisfy $x_1\ge x_2 \ge 0$ (so now there are 3 walks of length 4, not just 2).
In general, for any $g\ge 1$ the trace moment sequence for the generic Sato--Tate group $\USp(2g)$ counts returning walks on a $g$-dimensional integer lattice that satisfy $x_1\ge \ldots \ge x_g\ge 0$; this follows from a general result of Grabiner and Magyar \cite{GM93} that relates the decomposition of tensor powers of certain representations of classical Lie groups to lattice paths that are constrained to lie in the closure of the fundamental \emph{Weyl chamber} of the corresponding Lie algebra (which can be defined as an intersection of hyperplanes orthogonal to elements of a basis for the root system).

This combinatorial feature has an interesting asymptotic consequence.  For any integers $g'\ge g>0$, the moment sequences $\Mom_{\USp(2g')}[\tr]$ and $\Mom_{\USp(2g)}[\tr]$ must agree up to the $2g$th moment; see Exercise~\ref{ex:walk}.
Thus the moments sequences $\Mom_{\USp(2g)}[\tr]$ converge to a limiting sequence as $g\to \infty$:

\begin{align*}
\Mom_{\USp(2)}[\tr] &= (1,\, 0,\, 1,\, 0,\, 2,\, 0,\, 5,\, 0,\, 14,\, 0,\, 42,\, \ldots),\\
\Mom_{\USp(4)}[\tr] &= (1,\, 0,\, 1,\, 0,\, 3,\, 0,\, 14,\, 0,\, 84,\, 0,\, 594,\, \ldots),\\
\Mom_{\USp(6)}[\tr] &= (1,\, 0,\, 1,\, 0,\, 3,\, 0,\, 15,\, 0,\, 104,\, 0,\, 909,\, \ldots),\\
\Mom_{\USp(8)}[\tr] &= (1,\, 0,\, 1,\, 0,\, 3,\, 0,\, 15,\, 0,\, 105,\, 0,\, 944\, \ldots).\\
&\ \ \vdots\\
\Mom_{\USp(\infty)}[\tr] &= (1,\, 0,\, 1,\, 0,\, 3,\, 0,\, 15,\, 0,\, 105,\, 0,\, 945,\, \ldots).
\end{align*}
\smallskip

The limiting sequence $\Mom_{\USp(\infty)}[\tr]$ is precisely the moment sequence of the standard normal distribution (mean $0$ and variance $1$); the $n$th moment is zero if $n$ is odd, and for even $n$ it is given by
\[
(n-1)!!\coloneqq n(n-2)(n-4)\cdots 3\cdot 1.
\]
Figure \ref{fig:g1234generic} shows the $a_1$-distributions for $g=1,2,3,4$, normalized to the same scale, which illustrates convergence to the standard normal distribution.

\subsection{Exercises}

\begin{exercise}
Give combinatorial proofs of the identities used in \eqref{eq:u1u1}, \eqref{eq:u1su2}, \eqref{eq:su2su2}.
\end{exercise}

\begin{exercise}
Using the combinatorial interpretation of the trace moment sequence $\Mom_{\USp(2g)}[\tr]$, prove that for $g' > g$ the moment sequences $\Mom_{\USp(2g')}[\tr]$ and $\Mom_{\USp(2g)}[\tr]$ agree up to the $2g$th moment but disagree at the $(2g+2)$th moment.  Then show that the limiting trace moment sequence $\Mom_{\USp(\infty)}[\tr]$ is equal to the moment sequence of the standard normal distribution.
\end{exercise}

\begin{exercise}\label{ex:walk}
Characterize each of the 6 trace moment sequences that arise for connected Sato--Tate groups in dimension $g=2$ by showing that each sequence counts returning walks on an 2-dimensional integer lattice that are constrained to a certain region of the plane.
\end{exercise}

\begin{exercise}
Similarly characterize the~14 trace moment sequences that arise for connected Sato--Tate groups in dimension $g=3$ in terms of returning walks on a 3-dimensional integer lattice.
\end{exercise}

\begin{exercise}
For each of the 5 non-generic connected Sato--Tate groups that arise in dimension $g=2$ compute the moment sequence for $a_2$, the quadratic coefficient of the characteristic polynomial.
\end{exercise}

\begin{figure}
\includegraphics[scale=0.24]{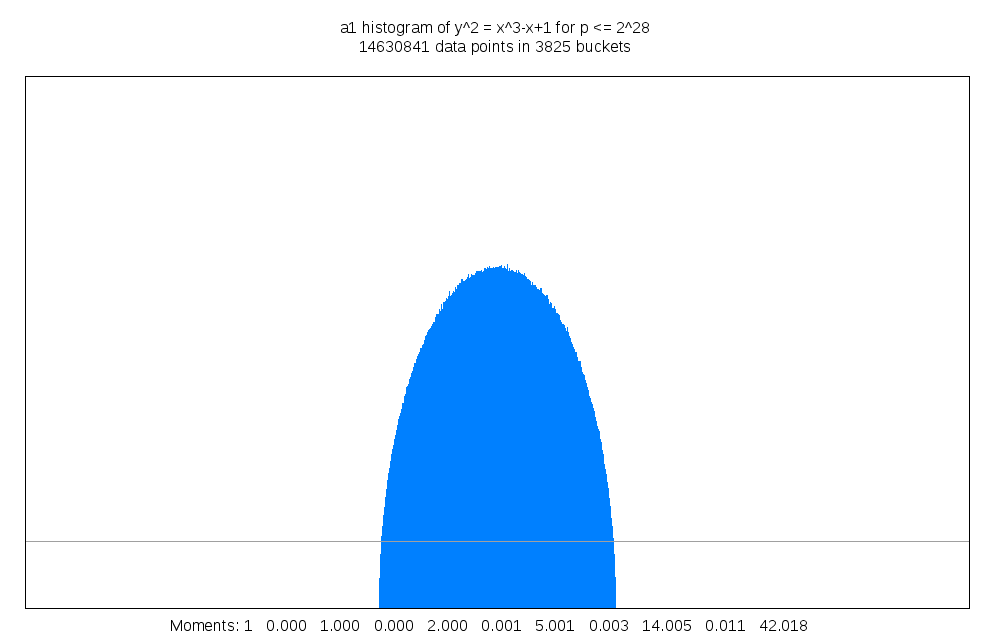}\\
\includegraphics[scale=0.24]{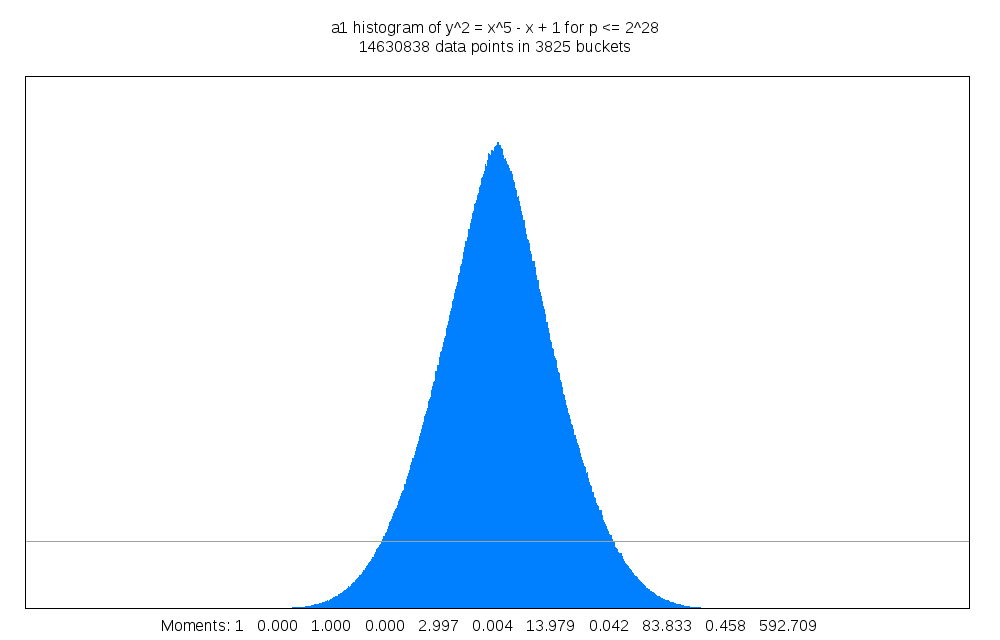}\\
\includegraphics[scale=0.24]{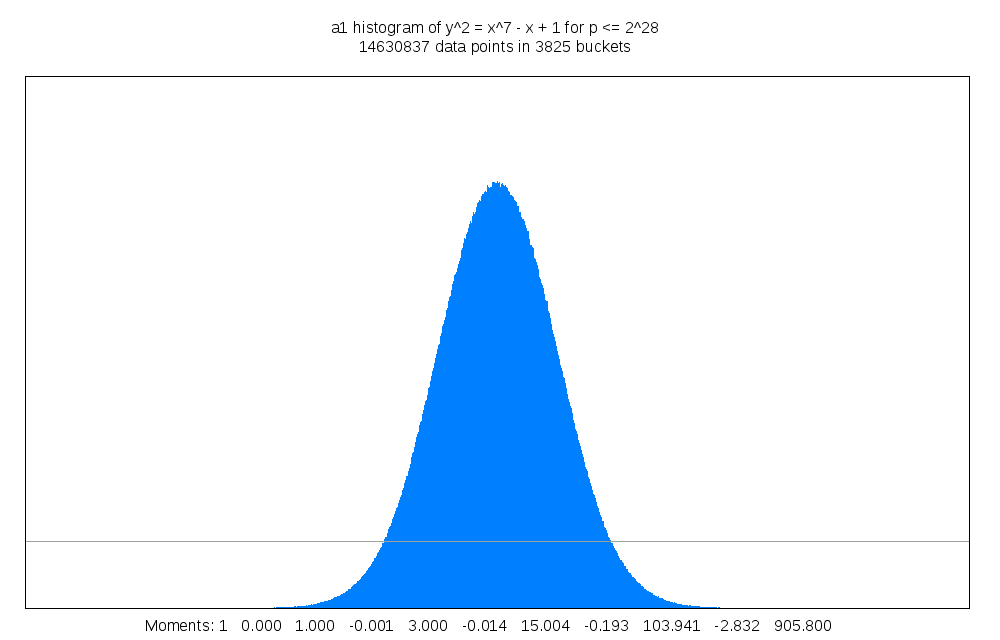}\\
\includegraphics[scale=0.24]{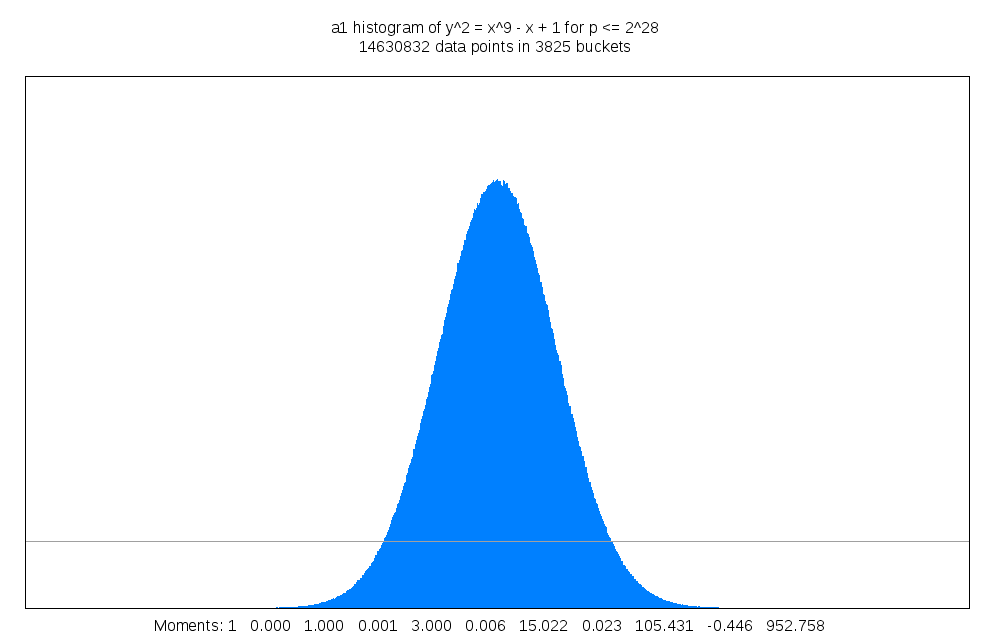}
\caption{Generic trace distributions for $g=1,2,3,4$ (shown with the same vertical scale).}\label{fig:g1234generic}
\end{figure}

\end{document}